\theoremstyle{plain}
\newtheorem*{introtheorem}{Theorem}
\newtheorem{theorem}{Theorem}[section]
\newtheorem{proposition}[theorem]{Proposition}
\newtheorem{lemma}[theorem]{Lemma}
\theoremstyle{definition}
\newtheorem{definition}[theorem]{Definition}
\newtheorem{example}[theorem]{Example}
\theoremstyle{remark}
\newtheorem{remark}[theorem]{Remark}
\newlist{deflist}{enumerate}{2}
\setlist[deflist]{label=(\arabic*),ref=(\arabic*)}
\newlist{statements}{enumerate}{2}
\setlist[statements]{label=(\arabic*),ref=(\arabic*)}
\newlist{equivalence}{enumerate}{2}
\setlist[equivalence]{label=(\alph*),ref=(\alph*)}
\newcommand{\abr}[1]{{\left\langle#1\right\rangle}} 
\newcommand{\cbr}[1]{{\left\{#1\right\}}} 
\newcommand{\br}[1]{{\left(#1\right)}} 
\newcommand{\sbr}[1]{{\left[#1\right]}} 
\newcommand{\ssbr}[1]{{\left[\left[#1\right]\right]}} 
\newcommand{\vbr}[1]{{\left\lvert#1\right\rvert}} 
\DeclareMathOperator{\Hom}{Hom}
\DeclareMathOperator{\End}{End}
\DeclareMathOperator{\ord}{ord}
\DeclareMathOperator{\pr}{pr}
\DeclareMathOperator{\Max}{Max}
\DeclareMathOperator{\Min}{Min}
\newcommand{\KK}{k}
\newcommand{\CC}{\mathbb{C}}
\newcommand{\NN}{\mathbb{N}}
\newcommand{\ZZ}{\mathbb{Z}}
\newcommand{\ee}{\mathbf{e}}
\newcommand{\zero}{\mathbf{0}}
\newcommand{\one}{\mathbf{1}}
\newcommand{\ol}{\overline}
\newcommand{\wh}{\widehat}
\newcommand{\reg}{\mathrm{reg}}
\newcommand{\RFI}{\mathcal{R}} 
\newcommand{\VR}{\mathcal{V}} 
\newcommand{\GSI}{\mathcal{G}} 
\newcommand{\fracid}{\mathfrak} 
\title{Gorenstein Endomorphism Rings on Curve Singularities}
\author{Philipp Korell}
\address{Philipp Korell\\
adesso insurance solutions GmbH}
\email{\href{mailto:philipp.korell@adesso-insurance-solutions.de}{philipp.korell@adesso-insurance-solutions.de}}
\subjclass[2020]{Primary 13H10; Secondary 20M12, 14H20}
\keywords{curve singularity, value semigroup, endomorphism ring, Gorenstein}
\begin{document}


\begin{abstract}
We characterize the Gorensteinness of endomorphism rings of a fractional ideal on a curve singularity by properties of the ideal. Moreover, the Gorenstein algebroid curves with only Gorenstein integral extensions are classified.
\end{abstract}


\maketitle


\section{Introduction}


By the Cayley--Hamilton theorem, endomorphism rings of modules are integral extensions of the base ring.  Consequently, endomorphism rings appear in the contexts of (algorithmic) normalization or non-commutative resolutions \cite{leuschke12}. Grauert and Remmert showed that a reduced Noetherian ring is normal if and only if it is the endomorphism ring of a so-called \emph{test ideal} \cite{grauert1971}. This criterion for normality is the foundation of de Jong's algorithm for normalization \cite{j.symb.comp.26.273,decker99}: Computing successively the endomorphism rings of test ideals yields a sequence of integral extensions, eventually stabilizing at the normalization. For one-dimensional rings, Leuschke related the sequence of endomorphism rings obtained by this procedure to non-commutative crepant resolutions \cite{can.j.math.59.332}.

In dimension one, normalization is the same as desingularization. The unique test ideal of a local algebroid curve is the maximal ideal. In this setting, B\"ohm, Decker, and Schulze studied the intermediate rings produced in the algorithmic normalization of algebroid curves according to de Jong \cite{int.j.alg.comp.24.69}. As a main tool for computing the endomorphism ring of the maximal ideal they considered the \emph{semigroup of values} of an algebroid curve.

In this article, we consider endomorphism rings of fractional ideals of a certain class of one-dimensional Cohen--Macaulay rings which we call \emph{admissible} (see Definition~\ref{030}). These admissible rings generalize algebroid curves while still allowing for the construction of a semigroup of values.

For an admissible ring $R$, there exist only finitely many (discrete) valuation rings of the total ring of fractions $Q_R$ which contain $R$. Considering simultaneously the corresponding discrete valuations $\nu = \br{\nu_1, \ldots, \mu_s} \colon Q_R \to \br{\ZZ \cup \infty}^s$, we define the semigroup of values as the set of values of non-zero divisors of $R$: $\Gamma_R = \nu \br{R^\reg} \subset \NN^s$. Similarly, to any fractional ideal $\fracid I$ of $R$ we may associate a \emph{value semigroup ideal} $\Gamma_{\fracid I}$.

It is a particularly useful feature of value semigroups that a certain symmetry condition determines whether an admissible ring is Gorenstein. This symmetry was first discovered by Kunz for irreducible curves \cite{proc.am.math.soc.25.748} and later extended by Delgado to reducible curves \cite{man.math.61.285}. From Kunz' symmetry condition, J\"ager derived a characterization of canonical ideal in terms of their value semigroup ideals \cite{arch.math.29.504}. Following Delgado's notion, D'Anna extended this characterization to reduced (but not necessarily irreducible) admissible rings \cite{comm.algebra.25.2939}.

Barucci, D'Anna, and Fr\"oberg took a closer look at the semiring structure of value semigroups \cite{j.pure.app.alg.147.215}. They introduced the class of \emph{good semigroups} which are submonoids of $\NN^s$ satisfying certain properties of value semigroups first identified by Delgado \cite{man.math.61.285}. Notably, they also gave an example of a good semigroup which is not the semigroup of values of an admissible ring.

Nonetheless, good semigroups together with the so-called \emph{good semigroup ideals} can be regarded as \enquote{combinatorial versions} of admissible rings and their fractional ideals. In this spirit, in \cite{j.commut.algebra.11.81} it was shown that every good semigroup possesses a \emph{canonical semigroup ideal} which induces a purely combinatorial version of the Cohen--Macaulay duality on fractional ideals. In fact, this canonical semigroup ideal is characterized by its dualizing property. This is in particular not obvious as the set of good semigroup ideals is in general not closed under taking differences (which is the corresponding operation to ideal quotients). Moreover, this \enquote{combinatorial duality} is compatible with the algebraic one under taking values.

We apply these combinatorial methods to determine the Gorensteinness of the endomorphism ring $\End \br{\fracid I}$ of a fractional ideal $\fracid I$ of an admissible ring $R$. The case of $\fracid I$ being the maximal ideal $\fracid m_R$ of a local admissible ring $R$ is solved already: Barucci and Fr\"oberg stated that $\End \br{\fracid m_R}$ is Gorenstein if and only if $R$ is almost Gorenstein of maximal embedding dimension \cite{j.alg.188.418}. Considering also analytically ramified rings, Goto, Matsuoka, and Phuong provided a full proof of this statement \cite{j.alg.379.355}.

As shown by Lipman, $R$ has maximal embedding dimension if and only if $\fracid m_R$ is stable \cite{amer.j.math.93.649}. In fact, we show that for any fractional ideal $\fracid I$, the Gorensteinness of $\End \br{\fracid I}$ implies stability of $\fracid I$. The proof makes use of the corresponding statement for value semigroups. This leads to the following characterization:

\begin{introtheorem}[Theorem~\ref{001}]
Let $R$ be an admissible ring, and let $\fracid I$ be a fractional ideal of $R$. Then the following are equivalent:
\begin{equivalence}
\item
The endomorphism ring $\End \br{\fracid I}$ is Gorenstein.
\item
The fractional ideal $\fracid I$ is stable and a canonical ideal of $\End \br{\fracid I}$.
\item
The fractional ideal $\fracid I$ is stable and self-dual.
\end{equivalence}
\end{introtheorem}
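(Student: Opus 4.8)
The plan is to single out \emph{stability} of $\fracid I$ as the decisive property. Conditions (b) and (c) both carry it as an explicit hypothesis, whereas (a) does not; so the substantive point is the implication that Gorensteinness of $E:=\End\br{\fracid I}$ forces $\fracid I$ to be stable, and once $\fracid I$ is known to be stable the remaining equivalences reduce to a short computation with the canonical ideal of $E$. Throughout, note that $R\subseteq E\subseteq\ol R$ is a finite birational extension, so $E$ is again admissible and $\fracid I$ is naturally a fractional ideal of $E$; here stability of $\fracid I$ means that $\fracid I$ is a principal fractional ideal of $E$, say $\fracid I=aE$ for a unit $a$ of $Q_R$, equivalently $\fracid I\cong E$ as $E$-modules.

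For the decisive implication I would argue through value semigroups, and this is where the main difficulty lies. Passing to values, $\Gamma_E=\Gamma_{\fracid I}-\Gamma_{\fracid I}$ by the compatibility of value semigroup ideals with ideal quotients \cite{j.commut.algebra.11.81}, and $\Gamma_{\fracid I}$ is a good semigroup ideal of $\Gamma_E$. By the Kunz--Delgado symmetry \cite{proc.am.math.soc.25.748,man.math.61.285}, $E$ Gorenstein forces $\Gamma_E$ to be a symmetric good semigroup. The combinatorial heart of the proof is then the purely semigroup-theoretic statement: if $\Delta$ is a good semigroup ideal such that $\Delta-\Delta$ is symmetric, then $\Delta$ is stable in the sense that $\Delta=\gamma+\br{\Delta-\Delta}$, where $\gamma$ is the smallest element of $\Delta$; I expect this to follow from the combinatorial duality of \cite{j.commut.algebra.11.81} applied to $\Delta$ over $\Delta-\Delta$ together with the symmetry of $\Delta-\Delta$. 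Transferring back along the correspondence between fractional ideals and value semigroup ideals then gives stability of $\fracid I$: choosing $a\in\fracid I^\reg$ realizing the smallest value $\gamma$, one gets $a^{-1}\fracid I\subseteq E$ with $\Gamma_{a^{-1}\fracid I}=\Gamma_E$, whence $a^{-1}\fracid I=E$ because a containment of fractional ideals inducing an equality of value semigroup ideals is an equality. This last transfer is the delicate point, since arbitrary features of $\Gamma_{\fracid I}$ need not lift to $\fracid I$; one must invoke the precise length/colength dictionary rather than a soft argument.

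It then remains to finish under the standing assumption that $\fracid I$ is stable, $\fracid I=aE$. Fix a canonical fractional ideal $\fracid K$ of $R$ with $R\subseteq\fracid K\subseteq\ol R$. Since $E$ is a finite birational extension of $R$, the fractional ideal $\Hom_R\br{E,\fracid K}=\cbr{q\in Q_R:qE\subseteq\fracid K}$ is a canonical ideal of $E$, and $E$ is Gorenstein if and only if $\Hom_R\br{E,\fracid K}\cong E$ as $E$-modules. From $\fracid I=aE$ one gets $\Hom_R\br{\fracid I,\fracid K}=a^{-1}\Hom_R\br{E,\fracid K}$, and this is the dual of $\fracid I$ in the relevant sense (by adjunction $\Hom_R\br{\fracid I,\fracid K}=\Hom_E\br{\fracid I,\Hom_R\br{E,\fracid K}}$ since $\fracid I$ is an $E$-module). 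Hence both $\fracid I\cong E$ and $\Hom_R\br{\fracid I,\fracid K}\cong\Hom_R\br{E,\fracid K}$ as $E$-modules, so $\fracid I$ is a canonical ideal of $E$ iff $E\cong\Hom_R\br{E,\fracid K}$ iff $E$ is Gorenstein, and likewise $\fracid I$ is self-dual iff $E\cong\Hom_R\br{E,\fracid K}$ iff $E$ is Gorenstein. Combined with the previous paragraph this shows that each of (a), (b), (c) is equivalent to \enquote{$\fracid I$ is stable and $\End\br{\fracid I}$ is Gorenstein}, which proves the theorem; in particular (b)$\Rightarrow$(a) and (c)$\Rightarrow$(a) use only this last paragraph, while (a)$\Rightarrow$(b) genuinely requires the value-semigroup argument.
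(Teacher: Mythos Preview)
Your overall architecture matches the paper's: the crux is deducing stability of $\fracid I$ from Gorensteinness of $E=\fracid I:\fracid I$ via value semigroups, after which the remaining implications become formal manipulations with canonical ideals. However, there is a genuine gap at the very first step of your value-semigroup argument.

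You assert that $\Gamma_E=\Gamma_{\fracid I}-\Gamma_{\fracid I}$ follows from \enquote{the compatibility of value semigroup ideals with ideal quotients}. This is false in general: only the inclusion $\Gamma_{\fracid I:\fracid I}\subset\Gamma_{\fracid I}-\Gamma_{\fracid I}$ holds (Remark~\ref{022}.\ref{022c}), and the paper exhibits explicit admissible rings where the inclusion is strict and the right-hand side is not even a good semigroup (Examples~\ref{088} and \ref{093}). The compatibility result from \cite{j.commut.algebra.11.81} that you have in mind applies only to quotients \emph{into a canonical ideal} (Theorem~\ref{016}.\ref{016b}), not to arbitrary endomorphism-type quotients. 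Without the equality $\Gamma_E=\Gamma_{\fracid I}-\Gamma_{\fracid I}$ your transfer back to the algebraic side breaks down: combinatorial stability gives $\Gamma_{a^{-1}\fracid I}=\Gamma_{\fracid I}-\Gamma_{\fracid I}$, which may strictly contain $\Gamma_E$, so the length dictionary cannot conclude $a^{-1}\fracid I=E$. This is exactly the content of Proposition~\ref{011}: algebraic stability of $\fracid I$ is equivalent to combinatorial stability of $\Gamma_{\fracid I}$ \emph{together with} the equality $\Gamma_{\fracid I:\fracid I}=\Gamma_{\fracid I}-\Gamma_{\fracid I}$.

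The paper closes this gap with Lemma~\ref{013}, which proves unconditionally that $\Gamma_{\fracid I}-\Gamma_{\fracid I}\subset K^0_{\Gamma_{\fracid I:\fracid I}}$, by a direct computation with the explicit $\Delta$-description of the normalized canonical semigroup ideal. Once $\fracid I:\fracid I$ is assumed Gorenstein, symmetry gives $K^0_{\Gamma_{\fracid I:\fracid I}}=\Gamma_{\fracid I:\fracid I}$ (Proposition~\ref{020}), and the sandwich
\[
\Gamma_{\fracid I:\fracid I}\subset\Gamma_{\fracid I}-\Gamma_{\fracid I}\subset K^0_{\Gamma_{\fracid I:\fracid I}}=\Gamma_{\fracid I:\fracid I}
\]
forces the missing equality (Lemma~\ref{014}). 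With this in hand, your combinatorial stability statement (the paper's Proposition~\ref{012}) and Proposition~\ref{011} finish the implication (a)$\Rightarrow$(b) along the lines you sketched. Your treatment of the implications under the standing hypothesis of stability is correct and coincides with the paper's argument.
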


For the second main result of this paper we turn to the case of $\fracid I = \fracid m_R$ being the maximal ideal of a local admissible ring $R$. Then we classify the simultaneous Gorensteinness of $R$ and $\End \br{\fracid m_R}$:

\begin{introtheorem}[Theorem~\ref{026}]
Let $R$ be a local admissible ring with maximal ideal $\fracid{m}_R$. The following are equivalent.
\begin{equivalence}
\item
Every integral extension of $R$ in its total ring of fractions $Q_R$ is Gorenstein.
\item
$R$ and $\fracid{m}_R : \fracid{m}_R$ are Gorenstein.
\item
$R$ is Gorenstein and has maximal embedding dimension.
\item
$\Gamma_R$ and $\Gamma_R \setminus \cbr{\zero} - \Gamma_R \setminus \cbr{\zero}$ are symmetric.
\end{equivalence}

Moreover, if $R$ contains a field, and if $\KK = R / \fracid{m}_R$ is algebraically closed, then the above statements are equivalent to $\wh R \cong \KK \ssbr{x,y} / \abr{x^{n+1} - y^2}$ for some $n \in \NN$.
\end{introtheorem}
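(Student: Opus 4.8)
We may assume that $R$ is not normal; otherwise $R = \ol{R}$ is the only integral extension of $R$ in $Q_R$, conditions $(a)$--$(d)$ hold trivially, and $\wh{R} \cong \KK\ssbr{x,y}/\abr{x - y^2}$ realises the last assertion with $n = 0$. Since $\fracid{m}_R$ is a regular fractional ideal, $S := \End\br{\fracid{m}_R} = \br{\fracid{m}_R : \fracid{m}_R}$ is a ring with $R \subsetneq S \subseteq \ol{R}$. The implication $(a) \Rightarrow (b)$ is immediate, as $R$ and $S$ are integral extensions of $R$ inside $Q_R$.

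Next, $(b) \Leftrightarrow (c)$. Both conditions contain the Gorensteinness of $R$, so it suffices, under that hypothesis, to decide when $S$ is Gorenstein. By Theorem~\ref{001} with $\fracid{I} = \fracid{m}_R$, the ring $S = \End\br{\fracid{m}_R}$ is Gorenstein if and only if $\fracid{m}_R$ is stable and a canonical ideal of $S$, and by Lipman's theorem stability of $\fracid{m}_R$ is equivalent to $R$ having maximal embedding dimension. Finally, if $R$ is Gorenstein then $\omega_R \cong R$, so the canonical module of $S$ is $\omega_S = \Hom_R\br{S, R} \cong \br{R : S} = \br{R : \br{\fracid{m}_R : \fracid{m}_R}}$; a short computation—using $\fracid{m}_R S = \fracid{m}_R$ for "$\supseteq$", and the maximality of $\fracid{m}_R$ together with non-normality for "$\subseteq$"—gives $\br{R : \br{\fracid{m}_R : \fracid{m}_R}} = \fracid{m}_R$. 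Thus $\fracid{m}_R$ is automatically a canonical ideal of $S$ when $R$ is Gorenstein, and $(b) \Leftrightarrow (c)$ follows. For $(b) \Leftrightarrow (d)$ one applies the symmetry criterion of Kunz and Delgado to $R$ and to $S$, using the compatibility of the combinatorial duality with taking values to identify $\Gamma_S$ with $\Gamma_{\fracid{m}_R} - \Gamma_{\fracid{m}_R} = \br{\Gamma_R \setminus \cbr{\zero}} - \br{\Gamma_R \setminus \cbr{\zero}}$.

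The heart of the proof is $(c) \Rightarrow (a)$. Assume $R$ is Gorenstein of maximal embedding dimension and, by the reduction above, not normal. From $\omega_R \cong R$ one has $\mathrm{Ext}^1_R\br{S, R} = 0$ (since $S$ is a maximal Cohen--Macaulay $R$-module) and $\br{R : S} = \fracid{m}_R$; dualising $0 \to R \to S \to S/R \to 0$ by $\mathrm{Ext}^\bullet_R\br{-, R}$ and using that this functor preserves lengths on finite-length modules yields $\ell\br{S/R} = \ell\br{\mathrm{Ext}^1_R\br{S/R, R}} = \ell\br{R/\br{R:S}} = \ell\br{R/\fracid{m}_R} = 1$. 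On the other hand, maximal embedding dimension means $\fracid{m}_R = fS$ for a minimal reduction $f$, whence $\ell\br{S/R} = \ell\br{\fracid{m}_R/fR} = \ell\br{R/fR} - 1 = e(R) - 1$. Therefore $e(R) = 2$, so $R$ has at most two branches, and $\Gamma_R$ is—in suitable coordinates—either $\abr{2, q} \subseteq \NN$ with $q$ odd (one branch), or the value semigroup $\cbr{\br{j,j} : 0 \le j < m} \cup \cbr{\gamma \in \NN^2 : \gamma \ge \br{m,m}}$ of two smooth branches of contact order $m$ (two branches). In each case one checks that every local ring strictly between $R$ and $\ol{R}$ has value semigroup of the same explicit shape but with a smaller parameter, that the only non-local intermediate ring is $\ol{R}$ itself, and that all the semigroups occurring—being two-generated, of the displayed $A_\bullet$-form, or $\NN^s$—are symmetric. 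The Kunz--Delgado criterion then yields that every integral extension of $R$ is Gorenstein, i.e.\ $(a)$.

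It remains to treat the case where $R$ contains a field with $\KK = R / \fracid{m}_R$ algebraically closed. If $\wh{R} \cong \KK\ssbr{x,y}/\abr{x^{n+1} - y^2}$, then $\wh{R}$ is a hypersurface, hence Gorenstein, and is of maximal embedding dimension (with $e(\wh R) = 2$ for $n \ge 1$, the case $n = 0$ being the normal one), so $(c)$ holds. Conversely, under $(c)$ the length computation above gives $e(R) \le 2$; for $e(R) = 1$ we have $\wh{R} \cong \KK\ssbr{t} \cong \KK\ssbr{x,y}/\abr{x - y^2}$, and for $e(R) = 2$ maximal embedding dimension forces $\wh{R} \cong \KK\ssbr{x,y}/\abr{f}$ with $f$ a reduced power series of order two, which a standard normal form argument for plane curve singularities of multiplicity two—completing the square, and first straightening one branch in the reducible case—brings into the form $x^{n+1} - y^2$. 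The main obstacle is $(c) \Rightarrow (a)$: although $e(R) = 2$ falls out of the duality computation, verifying that every value semigroup arising between $\Gamma_R$ and $\NN^s$ is again of the listed symmetric shape—equivalently, classifying all integral extensions of $R$, not merely those produced by successive blow-ups—requires a careful analysis of the relevant good semigroups; a secondary technical point is the identification $\Gamma_S = \br{\Gamma_R \setminus \cbr{\zero}} - \br{\Gamma_R \setminus \cbr{\zero}}$ used for $(b) \Leftrightarrow (d)$.
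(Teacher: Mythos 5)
Your architecture is essentially the paper's --- reduce everything to an explicit description of $\Gamma_R$ and then argue that every good semigroup between $\Gamma_R$ and $\ol{\Gamma_R}$ is symmetric --- and two of your intermediate steps are genuinely different and valid: the duality/length computation $\ell \br{S/R} = \ell \br{R / \br{R : S}} = 1$ combined with $\ell \br{S/R} = e \br{R} - 1$ to force $e \br{R} = 2$, and the derivation of \ref{026b}$\Leftrightarrow$\ref{026e} from Theorem~\ref{001} together with $\omega_S = R : S = \fracid{m}_R$, which replaces the paper's citation of Goto--Matsuoka--Phuong. However, the proposal has a genuine gap exactly where you flag it. The sentence \enquote{one checks that every local ring strictly between $R$ and $\ol R$ has value semigroup of the same explicit shape but with a smaller parameter \ldots\ and that all the semigroups occurring \ldots\ are symmetric} is not a check but the entire content of Theorem~\ref{023} (the implication from the explicit shape of $S$ to the symmetry of \emph{every} intermediate good semigroup), whose proof occupies Section~\ref{112}: it needs Lemma~\ref{042} ($M_S - M_S = S \cup \Delta \br{\tau_S}$ for local symmetric $S$), Lemmas~\ref{060}--\ref{062}, and Propositions~\ref{057} and \ref{063}, and in the two-branch case the argument of Figure~\ref{065} that a good semigroup containing $S$ and any off-diagonal point below the conductor is forced by \ref{E1} and \ref{E2} to contain the smaller model. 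Without this, \ref{026c}$\Rightarrow$\ref{026a} is not established, and this implication is the only nontrivial way back to statement \ref{026a}.

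A secondary point you also flag but do not resolve: the identification $\Gamma_{\fracid{m}_R : \fracid{m}_R} = \Gamma_{\fracid{m}_R} - \Gamma_{\fracid{m}_R}$ used for \ref{026b}$\Leftrightarrow$\ref{026f} is \emph{false} for general admissible rings (Example~\ref{088}); it holds here only because $R$ is Gorenstein, via $\fracid{m}_R : \fracid{m}_R = R : \fracid{m}_R$ and Theorem~\ref{016}.\ref{016b} applied to the canonical ideal $R$, which gives $\Gamma_{R : \fracid{m}_R} = \Gamma_R - M_{\Gamma_R} = M_{\Gamma_R} - M_{\Gamma_R}$ (Lemma~\ref{038}). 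As written, your appeal to \enquote{compatibility of the combinatorial duality with taking values} does not make the dependence on Gorensteinness explicit, and the equivalence would be circular or vacuous without it. The remaining parts (the trivial implication, the normal case, and the hypersurface normal form under \ref{026d}) are fine.
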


Sections \ref{108} -- \ref{110} provide the necessary foundations on admissible rings, value semigroups, resp.\ good semigroups, and the symmetry condition. Section \ref{111} is concerned with stating and proving Theorem \ref{001}. In Section \ref{112}, we establish a combinatorial version of Theorem \ref{026} which we then use to prove Theorem \ref{026} in Section \ref{113}.


\subsection*{Notations}


By a \emph{ring} $R$ we mean a commutative unitary ring. The set of maximal ideals of $R$ is denoted by $\Max \br{R}$, and the set of minimal prime ideals by $\Min \br{R}$. We write $R^\reg$ for the set of non-zero divisors (i.e.\ the \emph{regular} elements) of $R$. The \emph{total ring of fractions} of $R$ is the localization $Q_R = \br{R^\reg}^{-1} R$. For a subset $\fracid I$ of $Q_R$ we use the notation $\fracid I^\reg = \fracid I \cap Q_R^\reg$. The set of units of $R$ is denoted by $R^\ast$. In particular, $Q_R^\ast = Q_R^\reg$.

The integral closure of $R$ in $Q_R$ is denoted by $\ol R$, also called the \emph{normalization} of $R$ if $R$ is reduced. Finally, if $R$ is semilocal, then $\wh R$ is the \emph{completion} of $R$ at the \emph{Jacobson radical} $\fracid j_R = \bigcap_{\fracid m \in \Max \br{R}} \fracid m$.


\section{Preliminaries}
\label{108}


A \emph{complex algebroid curve} is a complete reduced Noetherian local $\CC$-algebra $R$ of dimension one. Therefore, its normalization $\ol R$ is finite, and by \emph{splitting of normalization} it is the product of finitely many discrete valuation rings:
\[
\begin{tikzcd}[row sep=large,column sep=huge]
R \ar[rr,hook] \ar[rd,hook] \ar[rrdd,hook,dashed,bend right,"\phi"'] &&\ol R \ar[d,"\cong"] \\
&\displaystyle{\prod_{\fracid p \in \Min \br{R}}} R / \fracid p \ar[r,hook] &\displaystyle{\prod_{\fracid p \in \Min \br{R}}} \ol{R / \fracid p} \ar[d,"\cong"] \\
&&\displaystyle{\prod_{\fracid p \in \Min \br{R}}} \CC \ssbr{t_{\fracid p}}.
\end{tikzcd}
\]

This yields a \emph{parametrization} $\phi$ of the curve. Since the valuation of each $\CC \ssbr{t_{\fracid p}}$ is the order $\ord_{t_{\fracid p}}$, it furthermore allows for the definition of the classical \emph{semigroup of values}
\[
\Gamma_R = \ord \circ \phi \br{R^\reg} \subset \NN^{\Min \br{R}}.
\]

When Delgado characterized Gorenstein algebroid curves by symmetry of their value semigroups \cite{man.math.61.285}, he identified the following important properties:
\begin{enumerate}[label=(S\arabic*),ref=(S\arabic*)]
\setcounter{enumi}{-1}
\item\label{S0}
There is a $\gamma \in \Gamma_R$ such that $\gamma + \NN^{\Min \br{R}} \subset \Gamma_R$.
\item\label{S1}
If $\alpha, \beta \in \Gamma_R$, then $\inf \br{\alpha, \beta} = \br{\min \cbr{\alpha_{\fracid p}, \beta_{\fracid p}}}_{\fracid p \in \Min \br{R}} \in \Gamma_R$.
\item\label{S2}
For any $\alpha, \beta \in \Gamma_R$ with $\alpha_{\fracid p} = \beta_{\fracid p}$ for some $\fracid p \in \Min \br{R}$, there is an $\varepsilon \in \Gamma_R$ such that $\varepsilon_{\fracid p} > \alpha_{\fracid p} = \beta_{\fracid p}$ and $\varepsilon_{\fracid q} \geq \min \cbr{\alpha_{\fracid q}, \beta_{\fracid q}}$ for all $\fracid q \in \Min \br{R}$, where equality is obtained whenever $\alpha_{\fracid q} \ne \beta_{\fracid q}$.
\end{enumerate}

The reason behind \ref{S0} is that $\ol R$ is finite as an $R$-module. For \ref{S1} consider the sum of two elements $x,y \in R$ with values $\alpha$ and $\beta$ such that in no component of $\phi \br{x} + \phi \br{y}$ a term of least order is canceled. Complementary, for \ref{S2} we choose $x$ and $y$ such that there is cancellation in component $\fracid p$.

Barucci, D'Anna, and Fr\"oberg considered conditions \ref{S0}, \ref{S1}, and \ref{S2} as defining properties of the so-called \emph{good semigroups} \cite{j.pure.app.alg.147.215} which will be treated in Section~\ref{109}. Before, the goal of this section is to enlarge the class of available rings which admit a semigroup of values satisfying the above properties. This will lead to the notion of \emph{admissible rings} (see Definition~\ref{030}).


\subsection{(Regular) Fractional Ideals}
\label{114}


We begin with recalling some basic properties of fractional ideals, i.e.\ \enquote{ideals with denominators}, for later reference.


\begin{definition}
\label{036}
Let $R$ be a ring with total ring of fractions $Q_R$.
\begin{deflist}
\item\label{036a}
An $R$-submodule $\fracid{I}$ of $Q_R$ is called a \emph{fractional ideal} of $R$ if there is an $x \in R^\reg$ such that $x \fracid{I} \subset R$.
\item\label{036b}
A fractional ideal $\fracid{I}$ of $R$ is called \emph{regular} if $\fracid{I}^\reg \ne \emptyset$.
\item\label{036c}
We denote the set of regular fractional ideals of $R$ by $\RFI_R$.
\end{deflist}
\end{definition}


In the following, we collect some well-known properties of fractional ideals.


\begin{remark}
\label{102}
Let $R$ be a ring, let $x \in Q_R^\reg$, and let $\fracid{I}$, $\fracid{I}'$, $\fracid{J}$, $\fracid{J}'$, and $\fracid{H}$ be $R$-submodules of $Q_R$.
\begin{statements}
\item\label{102a}
If $R$ is Noetherian, then $\fracid I$ is a fractional ideal of $R$ if and only if it is finitely generated.
\item\label{102b}
$\br{x \fracid{I}} : \fracid{J} = x \br{\fracid{I} : \fracid{J}} = \fracid{I} : \br{x^{-1} \fracid{J}}$.
\item\label{102c}
If $\fracid{I} \subset \fracid{I}'$ and $\fracid{J} \subset \fracid{J}'$, then $\fracid{I} : \fracid{J}' \subset \fracid{I} : \fracid{J} \subset \fracid{I}' : \fracid{J}$.
\item\label{102d}
$\fracid{I} : R = \fracid{I}$.
\item\label{102e}
If $\fracid{I}, \fracid{J} \in \RFI_R$, then $\fracid{I} \fracid{J}, \fracid{I} : \fracid{J} \in \RFI_R$.
\end{statements}
For proofs, see for example \cite[Lemma~2.3 and Proposition~2.7]{korell2018}.
\end{remark}


Let $R$ be a ring. As shown in \cite[Lemma~2.1]{herzog1971} and \cite[Lemma~3.1]{j.symb.comp.45.887}, for any two regular $R$-submodules $\fracid{I}$ and $\fracid{J}$ of $Q_R$ there is a natural $R$-module isomorphism
\[
\Hom_R \br{\fracid{I},\fracid{J}} \to \fracid{J} : \fracid{I}, \quad\phi \mapsto \frac{\phi \br{x}}{x},
\]
which is independent of the choice of a regular element $x \in \fracid{I}^\reg$. In particular, any $\phi \in \Hom_R \br{\fracid{I}, \fracid{J}}$ is multiplication by an element of $\fracid{J} : \fracid{I}$, and it can be extended uniquely to an endomorphism of $Q_R$.


Together with Remark~\ref{102}.\ref{102e} and the Cayley--Hamilton Theorem (see \cite[Theorem~4.3]{eisenbud1995}) this yields the following.


\begin{proposition}
\label{086}
\pushQED{\qed}
Let $R$ be a ring, and let $\fracid{I} \in \RFI_R$. Then $\fracid{I} : \fracid{I}$ is an integral extension of $R$ in $Q_R$.
\qedhere
\end{proposition}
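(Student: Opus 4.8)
The plan is to show that every element of $\fracid I : \fracid I$ is integral over $R$. The first step is to observe that $\fracid I : \fracid I$ is not merely an $R$-submodule of $Q_R$ but a subring: it contains $1$ since $1\cdot\fracid I \subseteq \fracid I$, and it is multiplicatively closed because $a\fracid I \subseteq \fracid I$ together with $b\fracid I \subseteq \fracid I$ forces $(ab)\fracid I \subseteq \fracid I$. Equivalently, under the isomorphism $\Hom_R(\fracid I,\fracid I) \xrightarrow{\sim} \fracid I : \fracid I$, $\phi \mapsto \phi(x)/x$ recalled just above, $\fracid I : \fracid I$ is the image of the endomorphism ring $\End_R(\fracid I)$, with composition of endomorphisms matching multiplication in $Q_R$. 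Thus $R \subseteq \fracid I : \fracid I \subseteq Q_R$ is an extension of rings, and only integrality remains.

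For the integrality, I would take an arbitrary $a \in \fracid I : \fracid I$ and use that, $\fracid I : \fracid I$ being a ring, $a^n \in \fracid I : \fracid I$ for all $n \geq 0$, so that $R[a] \subseteq \fracid I : \fracid I$. By Remark~\ref{102}.\ref{102e} we have $\fracid I : \fracid I \in \RFI_R$, hence there is $x \in R^\reg$ with $x(\fracid I : \fracid I) \subseteq R$; then $xR[a] \subseteq R$, so $R[a]$ is an $R$-submodule of the finitely generated module $x^{-1}R$, and hence finitely generated itself once $R$ is Noetherian (the setting of interest, in which every fractional ideal is finitely generated by Remark~\ref{102}.\ref{102a}). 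Multiplication by $a$ is then an $R$-endomorphism $\mu_a$ of the finitely generated module $R[a]$ with $\mu_a(R[a]) \subseteq R[a] = R\cdot R[a]$, so Cayley--Hamilton \cite[Theorem~4.3]{eisenbud1995} produces $c_1,\dots,c_n \in R$ with $\mu_a^n + c_1\mu_a^{n-1} + \dots + c_n = 0$ in $\End_R(R[a])$. Evaluating at $1 \in R[a]$ gives $a^n + c_1 a^{n-1} + \dots + c_n = 0$ in $Q_R$. As $a$ was arbitrary, $\fracid I : \fracid I$ is an integral extension of $R$.

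I do not expect a genuine obstacle: the proof is an assembly of the $\Hom$--colon dictionary, the closure of $\RFI_R$ under ideal quotients (Remark~\ref{102}.\ref{102e}), and the determinant trick. The one point that needs care is that Cayley--Hamilton must be applied to a \emph{finitely generated} $R$-module --- here $R[a]$, or alternatively $\fracid I$ itself, using that $\fracid I$ is a faithful $R$-module because it is regular, so that a monic relation for $\mu_a$ on $\fracid I$ forces one for $a$ in $Q_R$ --- and it is finite generation, not bare fractionality, that powers the conclusion. (One could instead quote directly that an element lying in a subring which is module-finite over $R$ is integral, but routing through \cite[Theorem~4.3]{eisenbud1995}, as the discussion preceding the statement suggests, keeps the argument self-contained.)
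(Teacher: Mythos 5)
Your proof is correct and follows exactly the route the paper intends: the paper gives no written argument beyond citing the $\Hom$--colon isomorphism, Remark~\ref{102}.\ref{102e}, and the Cayley--Hamilton theorem, which is precisely the assembly you carry out (whether one runs the determinant trick on $R[a]$ or on the faithful module $\fracid I$ itself is immaterial). Your explicit caveat that Cayley--Hamilton requires a \emph{finitely generated} module --- supplied here by Noetherianity via Remark~\ref{102}.\ref{102a}, the setting in which the paper actually works --- is a detail the paper glosses over, and you handle it correctly.
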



\begin{definition}
\label{079}
Let $R$ be a ring, and let $\fracid{I} \in \RFI_R$. The \emph{conductor} of $\fracid{I}$ is $\fracid{C}_R = \fracid{I} : \ol R$.
\end{definition}


\subsection{Discrete Valuation Rings}
\label{115}


In this section, let $Q$ be a ring with $Q^\reg = Q^\ast$ having a \emph{large Jacobson radical}, i.e.\ any prime ideal of $Q$ containing the Jacobson radical is maximal (see \cite[Proposition~19]{can.j.math.26.412} for equivalent characterizations).


\begin{definition}
\label{069}\
\begin{deflist}
\item\label{069a}
A \emph{valuation ring} of $Q$ is a proper subring $V$ of $Q$ such that the set $Q \setminus V$ is multiplicatively closed.
\item\label{069b}
Let $V$ be a valuation ring of $Q$. Then for any subring $R$ of $V$ with $Q_R = Q$ we call $V$ a \emph{valuation ring over $R$}.
\item\label{069c}
If $R$ is a subring of $Q$ with $Q_R = Q$, the set of valuation rings of $Q$ over $R$ is denoted by $\VR_R$.
\end{deflist}
\end{definition}


\begin{definition}
\label{072}
A valuation ring $V$ of $Q$ with regular maximal ideal $\fracid{m}_V$ is called a \emph{discrete valuation ring} if $\fracid{m}_V$ is finitely generated.
\end{definition}


\begin{definition}
\label{073}
Let $A$ be a ring. A \emph{discrete valuation} of $A$ is a surjective map $\nu \colon A \to \ZZ_\infty = \ZZ \cup \cbr{\infty}$ satisfying $\nu \br{x y} = \nu \br{x} + \nu \br{y}$ and $\nu \br{x + y} \geq \min \cbr{\nu \br{x}, \nu \br{y}}$ for every $x,y \in \ZZ_\infty$. Here we set $x + \infty = \infty + x = \infty$ and $\infty \geq x$ for all $x \in \ZZ_\infty$.
\end{definition}


\begin{proposition}
\label{075}
Let $Q$ be a ring with $Q^\reg = Q^\ast$, and let $V$ be a discrete valuation ring of $Q$. Then there is a discrete valuation $\nu_V$ of $Q$ such that the following hold.
\begin{statements}
\item\label{075a}
$V = \cbr{x \in Q \mid \nu_V \br{x} \geq 0}$, $\fracid{m}_V = \cbr{x \in Q \mid \nu_V \br{x} > 0}$, and $V^\ast = \cbr{x \in Q^\reg \mid \nu_V \br{x} = 0}$.
\item\label{075b}
The regular maximal ideal $\fracid{m}_V$ is generated by any $t \in Q^\reg$ with $\nu_V \br{t} = 1$. Such an element is called a \emph{uniformizing parameter} $V$.
\item\label{075c}
Let $t$ be a uniformizing parameter for $V$. Then for any $x \in Q^\reg$ there is a unique $a \in V^\ast$ such that $x = a t^{\nu_V \br{x}}$.
\end{statements}
\end{proposition}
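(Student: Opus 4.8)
The plan is to manufacture $\nu_V$ from a uniformizing parameter: first exhibit $\fracid m_V$ as a principal ideal $tV$ generated by a regular element $t$, and then set $\nu_V(x) = \sup\cbr{n \in \ZZ \mid x \in t^n V} \in \ZZ_\infty$ for $x \in Q$ and check the three assertions. To obtain $t$, I would use that, since $V$ is a valuation ring and $Q^\reg = Q^\ast$, for any $x,y \in Q^\reg$ one of $x/y$, $y/x$ lies in $V$ (otherwise the product $1$ would lie in the multiplicatively closed set $Q \setminus V$); hence the principal fractional ideals generated by regular elements form a chain under inclusion. Since $\fracid m_V$ is finitely generated and regular, a short argument then shows $\fracid m_V$ is principal, $\fracid m_V = tV$ with $t \in Q^\reg$ — among finitely many generators the regular ones are linearly ordered by divisibility, and regularity of $\fracid m_V$ lets one arrange all generators to be comparable. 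I would also record here that $\fracid m_V$ is the unique maximal ideal of $V$ meeting $Q^\reg$, so that $V^\reg \setminus \fracid m_V = V^\ast$.

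With $\fracid m_V = tV$ fixed, I would establish the unique-factorization statement and the characterization of $V$, $\fracid m_V$, $V^\ast$ together. For $x \in Q^\reg$ the set $S_x = \cbr{n \in \ZZ \mid x \in t^n V}$ is downward closed (as $tV \subseteq V$) and nonempty (for $n \ll 0$ one has $t^n x \in V$), and the decisive point is that it is bounded above: this is precisely the assertion $x \notin \bigcap_{n \ge 0} t^n V$, i.e.\ that $\bigcap_n \fracid m_V^n$ contains no regular element, which I would derive from finite generation of $\fracid m_V$ by a Krull-intersection/Nakayama argument. Granting it, $n_x := \max S_x$ exists, and writing $x = t^{n_x} a$ forces $a = x t^{-n_x} \in V^\reg$ with $a \notin \fracid m_V$, hence $a \in V^\ast$; this factorization is unique since $t^k \in \fracid m_V$ is a non-unit of $V$ for $k \ge 1$, so $t^{m-m'} = a'/a \in V^\ast$ forces $m = m'$. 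Setting $\nu_V(x) = n_x$ and extending to all of $Q$ via $\nu_V(x) = \sup S_x \in \ZZ_\infty$, the descriptions $V = \cbr{x \mid \nu_V(x) \ge 0}$, $\fracid m_V = \cbr{x \mid \nu_V(x) > 0}$, $V^\ast = \cbr{x \in Q^\reg \mid \nu_V(x) = 0}$ follow from $t^n V \subseteq V$ for $n \ge 0$ together with downward-closedness of $S_x$; the factorization statement is what we just proved; and the generation statement holds since $\nu_V(t') = 1$ forces $t' V = tV = \fracid m_V$.

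Finally I would verify that $\nu_V$ is a discrete valuation of $Q$ in the required sense. Surjectivity onto $\ZZ_\infty$ is clear from $\nu_V(t^n) = n$ and $\nu_V(0) = \infty$; the ultrametric inequality $\nu_V(x+y) \ge \min\cbr{\nu_V(x), \nu_V(y)}$ holds because $S_x \cap S_y \subseteq S_{x+y}$, each $t^n V$ being a $V$-module; and for multiplicativity, writing $x = t^m u$ with $u \in V \setminus \fracid m_V$ whenever $\nu_V(x) = m$ is finite (and similarly for $y$), the primeness of $\fracid m_V$ makes $V \setminus \fracid m_V$ multiplicatively closed, so $xy = t^{m+n}(uw)$ with $uw \notin \fracid m_V$ and thus $\nu_V(xy) = m+n$; the case of an infinite value is immediate. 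The step I expect to be the main obstacle is the boundedness/Krull-intersection claim: it is what upgrades ``$\fracid m_V$ has a smallest positive value'' to ``$\nu_V$ is genuinely $\ZZ$-valued'', and it is exactly where finite generation of $\fracid m_V$ is indispensable. A smaller additional point is ensuring $\nu_V > -\infty$ on non-regular elements of $Q$, which rests on $Q$ being recovered from $V$ by inverting regular elements.
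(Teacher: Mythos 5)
The paper gives no inline proof of Proposition~\ref{075}; it defers entirely to the cited thesis, so there is nothing internal to compare against. Your architecture is the standard one (exhibit $\fracid m_V = tV$ with $t$ regular, set $\nu_V\br{x} = \sup\cbr{n \in \ZZ \mid x \in t^n V}$, verify the axioms), and the parts you actually carry out are sound: the chain property of regular principal ideals from multiplicative closedness of $Q \setminus V$, uniqueness of the factorization $x = a t^{\nu_V\br{x}}$, multiplicativity via primeness of $\fracid m_V$, and the ultrametric inequality. The problem is that the two steps you defer as ``short arguments'' are precisely where the generality of the setting ($Q$ a total ring of fractions with zero-divisors, $V$ neither local nor Noetherian) does real work, and for one of them the tool you name cannot work.

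Concretely: (1) Boundedness of $S_x$. You propose to show that $\bigcap_n \fracid m_V^n$ contains no regular element by ``Krull intersection/Nakayama'' from finite generation of $\fracid m_V$. But $V$ is not Noetherian, so Artin--Rees is unavailable, and more importantly the intersection is genuinely nonzero whenever $Q$ has several components: for $V = \KK\ssbr{t_1} \times \KK\ssbr{t_2}\sbr{t_2^{-1}}$ inside $Q = \KK\ssbr{t_1}\sbr{t_1^{-1}} \times \KK\ssbr{t_2}\sbr{t_2^{-1}}$ one has $\fracid m_V = \br{t_1,1}V$ and $\bigcap_n \fracid m_V^n = 0 \times \KK\ssbr{t_2}\sbr{t_2^{-1}}$. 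So no argument aiming at $\bigcap_n \fracid m_V^n = 0$ can succeed; what must be shown is only that this intersection meets no regular element, and that has to come from the valuation-ring axiom itself (from $x t^{-n} \in V$ for all $n$ one gets $x^{-1}t^k \notin V$ for all $k \in \ZZ$, and turning this into a contradiction is where the standing hypothesis of Section~\ref{115} that $Q$ has a large Jacobson radical --- which you never invoke --- enters the cited reference). (2) Principality of $\fracid m_V$ on a \emph{regular} generator. Your chain argument totally orders the regular elements of $V$ by divisibility, but a finite generating set of $\fracid m_V$ may consist entirely of zero-divisors (e.g.\ $\br{t_1,0}$ and $\br{0,1}$ above), and for a zero-divisor $y$ neither $y t^{-1} \in V$ nor any comparison in the other direction is forced, so ``arranging all generators to be comparable'' is the actual content, not a footnote. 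You also rely without proof on $\fracid m_V$ being the unique maximal ideal meeting $Q^\reg$ and on every regular non-unit of $V$ lying in $\fracid m_V$ (needed to pass from $a \in V^\reg \setminus \fracid m_V$ to $a \in V^\ast$); both are true but require the same circle of ideas.
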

\begin{proof}
See \cite[Propositions~2.22, 2.23, and D.13 and Corollary~2.25]{korell2018}.
\end{proof}


\subsection{Admissible Rings}


The theory of discrete valuation rings provides a general framework for the definition of value semigroups. So let $R$ be a one-dimensional equidimensional semilocal Cohen--Macaulay ring with total ring of fractions $Q_R$. Then the set $\VR_R$ is non-empty and finite, and it contains only discrete valuation rings. Moreover, $\ol R = \bigcap_{V \in \VR_R} V$ (see \cite[Chapter~II, Theorem~2.11]{kiyek2004}). We write $\nu_V$ for the discrete valuation $Q_R \to \ZZ_\infty$ corresponding to $V \in \VR_R$ (see Proposition~\ref{075}). Moreover, we denote
\[
\nu_R = \br{\nu_V}_{V \in \VR_R} \colon Q_R \to \br{\ZZ_\infty}^{\VR_R}.
\]

\begin{definition}
\label{028}
Let $R$ be a one-dimensional equidimensional semilocal Cohen--Macaulay ring.
\begin{deflist}
\item\label{028a}
The \emph{semigroup of values} (or \emph{value semigroup}) of $R$ is
\[
\Gamma_R = \nu_R \br{R^\reg} \subset \NN^{\VR_R}.
\]
\item\label{028b}
For a regular fractional ideal $\fracid{I}$ of $R$ we call
\[
\Gamma_\fracid{I} = \nu_R \br{\fracid{I}^\reg} \subset \ZZ^{\VR_R}
\]
the \emph{value semigroup ideal} of $\fracid{I}$.
\end{deflist}
\end{definition}

However, if we require value semigroups to satisfy conditions \ref{S0} -- \ref{S2}, we need to further restrict the available rings. For example, \ref{S0} means that the normalization is finite. In order to obtain sums with or without cancellation of values we need sufficiently large and compatible residue fields. As explained in \cite[Section~3]{j.commut.algebra.11.81} this leads to the following definition.


\begin{definition}
\label{030}
A one-dimensional equidimensional semilocal Cohen--Macaulay ring $R$ is called \emph{admissible} if the following hold.
\begin{deflist}
\item
$R$ is \emph{analytically reduced}, i.e.\ $\wh R$ is reduced.
\item
$R$ is \emph{residually rational}, i.e.\ $R / \fracid{m} = \ol R / \fracid{n}$ for any $\fracid{m} \in \Max \br{R}$ and for every $\fracid{n} \in \Max \br{\ol R}$ with $\fracid{n} \cap R = \fracid{m}$.
\item
$R$ has \emph{large residue fields}, i.e.\ $\vbr{R / \fracid{m}} \geq \vbr{\VR_{R_\fracid{m}}}$ for all $\fracid{m} \in \Max \br{R}$.
\end{deflist}
\end{definition}


Integral extensions of local ring are not necessarily local. For example, if $R$ is reduced, then $\vbr{\Max \br{\ol R}} = \vbr{\Min \br{\wh R}}$. Therefore, we allow algebroid curves to be semilocal. Since they are complete, an algebroid curve is the product of finitely many local algebroid curves.


\begin{definition}
\label{074}
Let $\KK$ be a field. An \emph{algebroid curve} over $\KK$ is a complete equidimensional reduced Noetherian semilocal $\KK$-algebra $R$ of dimension one such that  all residue fields of $R$ are isomorphic to $\KK$ (under the canonical surjections $R \to R / \fracid{m}$ for $\fracid{m} \in \Max \br{R}$). We call an algebroid curve \emph{admissible} if it is an admissible ring.
\end{definition}


\begin{proposition}
\label{076}
Let $\KK$ be a field, and let $R$ be an algebroid curve over $\KK$. If $\vbr{\KK} \geq \vbr{\Min \br{R}}$, then $R$ is admissible.
\end{proposition}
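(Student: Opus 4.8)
The plan is to verify one by one the three defining conditions of an admissible ring (Definition~\ref{030}) for $R$, using throughout that an algebroid curve over $\KK$ is complete, reduced, Noetherian, semilocal of dimension one with all residue fields equal to $\KK$. First, analytic reducedness is immediate: completeness gives $\wh{R} = R$, and $R$ is reduced by hypothesis.

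Before treating the other two conditions I would record the shape of the normalization. Since $R$ is complete and reduced, $\ol{R}$ is a finite $R$-module, again complete and semilocal, and by splitting of normalization it decomposes as $\ol{R} \cong \prod_{\fracid{p} \in \Min \br{R}} \ol{R / \fracid{p}}$, each factor a complete discrete valuation ring which is a $\KK$-algebra; by Cohen's structure theorem each factor has the form $L_{\fracid{p}} \ssbr{t_{\fracid{p}}}$ with $L_{\fracid{p}} / \KK$ a finite field extension. The idempotent decomposition puts $\Max \br{\ol{R}}$ in bijection with the set of factors, and since $\ol{R} = \bigcap_{V \in \VR_R} V$ with each such $V$ a discrete valuation ring (Proposition~\ref{075}), it likewise puts $\VR_R$ in bijection with the factors, whence $\vbr{\VR_R} = \vbr{\Min \br{R}}$; localizing at a maximal ideal $\fracid{m}$ of $R$ deletes only the factors not supported at $\fracid{m}$, so $\vbr{\VR_{R_{\fracid{m}}}} \le \vbr{\Min \br{R}}$.

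The large-residue-field condition now follows at once from the hypothesis: for every $\fracid{m} \in \Max \br{R}$ one has $R / \fracid{m} \cong \KK$, hence $\vbr{R / \fracid{m}} = \vbr{\KK} \ge \vbr{\Min \br{R}} \ge \vbr{\VR_{R_{\fracid{m}}}}$. Residual rationality is the remaining point, and I expect it to be the main obstacle: given $\fracid{n} \in \Max \br{\ol{R}}$ lying over $\fracid{m} \in \Max \br{R}$, the residue field $\ol{R} / \fracid{n}$ is the coefficient field $L_{\fracid{p}}$ of the corresponding factor, and one must see that the finite extension $L_{\fracid{p}} / \KK$ is trivial, so that $R / \fracid{m} \into \ol{R} / \fracid{n}$ is an equality. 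This is precisely where the hypothesis that $R$ is an algebroid curve over $\KK$ — all residue fields of $R$ equal to $\KK$ — together with completeness has to be brought to bear to exclude residue-field growth along the normalization, and I would deduce it either directly from compatible Cohen presentations of $R$ and of $\ol{R}$ or by invoking the admissibility setup developed in \cite{j.commut.algebra.11.81}.
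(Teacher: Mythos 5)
Your treatment of analytic reducedness, of the splitting $\ol R \cong \prod_{\fracid{p} \in \Min \br{R}} \ol{R / \fracid{p}}$ with each factor a complete discrete valuation ring, of the identification $\vbr{\VR_R} = \vbr{\Min \br{R}}$, and of the large-residue-field condition is correct, and any proof must contain these steps. (The paper itself offers no argument to compare against: it defers entirely to \cite[Proposition~3.41]{korell2018}.) The problem is the last step. You correctly single out residual rationality as ``the main obstacle'' and then do not prove it, offering only to deduce it ``from compatible Cohen presentations'' or by citing \cite{j.commut.algebra.11.81}. That is a genuine gap, and it is not one that can be closed along the lines you sketch, because residual rationality is \emph{not} a formal consequence of Definition~\ref{074} together with $\vbr{\KK} \geq \vbr{\Min \br{R}}$.

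Concretely, take $R = \mathbb{R} \ssbr{x,y} / \abr{x^2 + y^2} \cong \mathbb{R} + t\, \CC \ssbr{t}$. This is a complete reduced Noetherian local equidimensional $\mathbb{R}$-algebra of dimension one with residue field $\mathbb{R}$ and a single minimal prime, so it satisfies every hypothesis your argument uses, including $\vbr{\mathbb{R}} \geq 1 = \vbr{\Min \br{R}}$; yet $\ol R = \CC \ssbr{t}$ has residue field $\CC \ne \mathbb{R}$, so the finite extension $L_{\fracid{p}} / \KK$ you need to be trivial is not. In other words, the desired conclusion must rest on input beyond the data you have assembled --- presumably the cited thesis works with a notion of algebroid curve that controls the residue fields of the normalization (e.g.\ one given by a parametrization inside $\prod_{\fracid{p}} \KK \ssbr{t_{\fracid{p}}}$, which builds residual rationality in), or restricts to $\KK$ algebraically closed, where the step is immediate. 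As written, your argument establishes two of the three conditions of Definition~\ref{030} but cannot be completed to establish the third, so the proof is incomplete precisely at its only nontrivial point.
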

\begin{proof}
See \cite[Proposition~3.41]{korell2018}.
\end{proof}


\begin{proposition}
\label{029}
Let $R$ be a ring, and let $A$ be an integral extension of $R$ in $Q_R$.
\begin{statements}
\item\label{029a}
If $R$ is admissible, then $A$ is admissible with $\VR_R = \VR_A$ and $\nu_R = \nu_A$. In particular, the value semigroup ideal of $A$ as a regular fractional ideal of $R$ coincides with its semigroup of values as an admissible ring.
\item\label{029b}
If $R$ is an (admissible) algebroid curve over a field $\KK$, then $A$ is an (admissible) algebroid curve over $\KK$.
\end{statements}
\end{proposition}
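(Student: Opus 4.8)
The plan is to realise $A$ as an intermediate ring $R \subseteq A \subseteq \ol R$ and to transfer each defining property of an admissible ring across the finite extension $R \subseteq A$, keeping track throughout of the identity $Q_A = Q_R$. Since an admissible ring is analytically reduced, $R$ is reduced and $\ol R$ is a finitely generated $R$-module; as $A$ is integral over $R$ inside $Q_R$ it lies between $R$ and $\ol R$, so $A$ is a finite $R$-module, hence Noetherian, $A \in \RFI_R$ (it contains $1$), and $\ol A = \ol R$ (because $\ol R$ is integral over $A$ and integrally closed in $Q_R$). Being a submodule of $Q_R$, the ring $A$ is torsion-free over $R$, so $R^\reg \subseteq A^\reg$ and $Q_R = \br{R^\reg}^{-1} R \subseteq \br{R^\reg}^{-1} A \subseteq Q_R$; conversely $aq = 0$ for $a \in A^\reg$ and $q = r/s \in Q_R$ (with $r \in R$, $s \in R^\reg$) gives $ar = 0$ in $A$, hence $r = 0$ and $q = 0$, so $A^\reg \subseteq Q_R^\ast$ and $Q_A = Q_R$. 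The remaining ring-theoretic hypotheses then transfer: $A$ is reduced as a subring of the finite product of fields $Q_R = \prod_{\fracid p \in \Min(R)} R_\fracid p$; $A$ is semilocal of dimension one by integrality; $A$ is equidimensional because $Q_A = Q_R$ forces $\vbr{\Min(A)} = \vbr{\Min(R)}$ while each minimal prime of $A$ contracts to a minimal prime of $R$ of the same coheight; and $A$ is Cohen--Macaulay because $\operatorname{Ass}_R(A) \subseteq \operatorname{Ass}_R(Q_R) = \Min(R)$, so $A$ is a maximal Cohen--Macaulay $R$-module, which passes to the ring $A$ after localising at its maximal ideals.

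Once $A$ is known to be one-dimensional, equidimensional, semilocal and Cohen--Macaulay with $Q_A = Q_R$, the equality $\VR_A = \VR_R$ is immediate: any $V \in \VR_R$ is a discrete valuation ring, hence integrally closed in $Q_R$, so the integral extension $A$ of $R \subseteq V$ satisfies $A \subseteq V$; the reverse inclusion is trivial, whence $\nu_A = (\nu_V)_{V \in \VR_A} = (\nu_V)_{V \in \VR_R} = \nu_R$. It then remains to verify the three conditions of Definition~\ref{030} for $A$. For analytic reducedness, one checks that $\fracid j_R A \subseteq \fracid j_A$ and that some power of $\fracid j_A$ lies in $\fracid j_R A$ (as $A/\fracid j_R A$ is Artinian), so the $\fracid j_A$-adic and $\fracid j_R$-adic topologies on $A$ agree and $\wh A = A \otimes_R \wh R$; by flatness of completion this embeds $\wh A$ into $\ol R \otimes_R \wh R = \wh{\ol R}$, a finite product of complete discrete valuation rings, which is reduced. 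Residual rationality and largeness of the residue fields for $A$ both follow from the chain $R/\fracid m \hookrightarrow A/\fracid{m}_A \hookrightarrow \ol R/\fracid n$ (for $\fracid{m}_A \in \Max(A)$, $\fracid m = \fracid{m}_A \cap R$, and $\fracid n \in \Max(\ol A)$ over $\fracid{m}_A$): residual rationality of $R$ makes the outer terms equal, forcing $A/\fracid{m}_A = \ol A/\fracid n$, and since $\VR_{A_{\fracid{m}_A}}$ and $\VR_{R_\fracid m}$ are in bijection with $\cbr{\fracid n \in \Max(\ol R) : \fracid n \cap A = \fracid{m}_A}$, resp.\ $\cbr{\fracid n \in \Max(\ol R) : \fracid n \cap R = \fracid m}$, the former contained in the latter, one gets $\vbr{\VR_{A_{\fracid{m}_A}}} \le \vbr{\VR_{R_\fracid m}} \le \vbr{R/\fracid m} = \vbr{A/\fracid{m}_A}$. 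This proves $A$ is admissible; the final sentence of \ref{029a} is then clear, since the value semigroup ideal of $A$ over $R$ and the value semigroup of $A$ as an admissible ring are both $\nu_R(A^\reg) = \nu_A(A^\reg)$, with $A^\reg = A \cap Q_R^\ast = A \cap Q_A^\ast$ unambiguous.

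For \ref{029b}, if $R$ is an algebroid curve over $\KK$ then $R$ is complete, semilocal and reduced, so $\ol R$ is finite over $R$, $A$ is a finite $R$-module, and $A = A \otimes_R \wh R = \wh A$ is complete; combined with the transfers above, $A$ is a complete equidimensional reduced Noetherian semilocal $\KK$-algebra of dimension one. Its residue fields are forced to be $\cong \KK$ by the same chain $R/\fracid m \hookrightarrow A/\fracid{m}_A \hookrightarrow \ol R/\fracid n$, using that the residual extensions of $\ol R$ over $R$ are trivial — which, in the admissible case, is exactly residual rationality. Hence $A$ is an (admissible) algebroid curve over $\KK$.

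I expect the main obstacle to be condition~(1) of Definition~\ref{030}, that $A$ is analytically reduced: it is the one defining property that does not follow from the corresponding property of $R$ by a direct module-theoretic argument, and the reduction to $\wh A \subseteq \wh{\ol R}$ via the comparison of the Jacobson-radical-adic topologies, together with the identification of $\wh{\ol R}$ as a finite product of complete discrete valuation rings, has to be carried out carefully. A secondary point is the bookkeeping of localisations and maximal ideals in condition~(3), since the total ring of fractions is not preserved when one localises $R$ at a maximal ideal.
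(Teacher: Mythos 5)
The paper itself offers no argument here---it defers to \cite[Theorem~3.45]{korell2018}---so your proposal has to be judged on its own merits. For part \ref{029a} your argument is correct and follows the expected route: sandwich $A$ between $R$ and $\ol R$ using finiteness of the normalization (which indeed comes from analytic reducedness), check $Q_A = Q_R$, transfer the Cohen--Macaulay/equidimensional/semilocal package, identify $\VR_A = \VR_R$ via integral closedness of the discrete valuation rings (equivalently via $\ol R = \bigcap_{V \in \VR_R} V$), and handle the one genuinely delicate point---analytic reducedness---by comparing the $\fracid j_A$-adic and $\fracid j_R A$-adic topologies and embedding $\wh A = A \otimes_R \wh R$ into $\wh{\ol R}$, a finite product of complete discrete valuation rings. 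The chain $R/\fracid m \into A/\fracid m_A \into \ol R/\fracid n$ correctly delivers residual rationality and the residue-field bound. I see no gap in \ref{029a}.

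Part \ref{029b} is where your proof is incomplete, and you half-acknowledge it yourself. To conclude that every residue field of $A$ is $\KK$ you invoke ``the residual extensions of $\ol R$ over $R$ are trivial,'' but you only justify this in the admissible case (where it is residual rationality) or implicitly when $\KK$ is algebraically closed. For a general algebroid curve over a non-algebraically-closed field this premise fails: take $R = \mathbb{R}\ssbr{x,y}/\abr{x^2+y^2}$, a complete local one-dimensional reduced domain with residue field $\mathbb{R}$, hence an algebroid curve over $\mathbb{R}$ in the sense of Definition~\ref{074}; its normalization is $A = \ol R \cong \mathbb{C}\ssbr{t}$, whose residue field is $\mathbb{C}$, so $A$ is not an algebroid curve over $\mathbb{R}$ under that definition. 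So the non-parenthesized half of \ref{029b} cannot be proved by your chain argument as stated (and, read literally against Definition~\ref{074}, appears to require either admissibility or some hypothesis such as $\KK$ algebraically closed, presumably supplied in the cited source). Your argument does correctly cover the admissible case, which is the only case the present paper actually uses; you should either restrict \ref{029b} to that case or make explicit the extra hypothesis under which the residual extensions of $\ol R$ over $R$ are trivial.
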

\begin{proof}
See \cite[Theorem~3.45]{korell2018}.
\end{proof}


\begin{proposition}
\label{078}
Let $\KK$ be a field, and let $R$ be an algebroid curve over $\KK$. For every $\fracid{p} \in \Min \br{R}$ and any uniformizing parameter $t_\fracid{p}$ for the discrete valuation ring $\ol{R / \fracid{p}}$ there is an isomorphism
\begin{align*}
\phi_\fracid{p} \colon Q_R / \fracid{p} Q_R = Q_{R / \fracid{p}} &\to \KK \ssbr{T_\fracid{p}} \sbr{T_\fracid{p}^{-1}}, \\
t_\fracid{p} \mapsto T_\fracid{p},
\end{align*}
with $\phi_\fracid{p} \br{\ol R / \fracid{p}} = \KK \ssbr{t_\fracid{p}}$. This induces an isomorphism
\[
\phi \colon Q_R = \prod_{\fracid{p} \in \Min \br{R}} Q_R / \fracid{p} Q_R \to \prod_{\fracid{p} \in \Min \br{R}} \KK \ssbr{T_\fracid{p}} \sbr{T_\fracid{p}^{-1}}
\]
satisfying
\[
\nu_R = \ord_T \circ \phi.
\]
\end{proposition}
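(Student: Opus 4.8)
The plan is to prove the statement one minimal prime at a time and then reassemble it via the decomposition $Q_R = \prod_{\fracid{p} \in \Min \br{R}} Q_R / \fracid{p} Q_R$ recorded in the statement. So fix $\fracid{p} \in \Min \br{R}$. Then $R / \fracid{p}$ is a complete semilocal Noetherian domain of dimension one and a $\KK$-algebra all of whose residue fields equal $\KK$; since a complete semilocal ring is a finite product of complete local rings and a domain admits no nontrivial such decomposition, $R / \fracid{p}$ is in fact local. Complete local domains are Nagata, so the normalization $\ol{R / \fracid{p}}$ is module-finite over $R / \fracid{p}$; as $R / \fracid{p}$ is henselian and $\ol{R / \fracid{p}}$ is a domain, $\ol{R / \fracid{p}}$ is again local, and being a one-dimensional normal Noetherian local domain it is a discrete valuation ring, complete because module-finite over the complete ring $R / \fracid{p}$. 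Its valuation is $\ord_{t_{\fracid{p}}}$ for any uniformizing parameter $t_\fracid{p}$. I will use that the residue field of $\ol{R / \fracid{p}}$ is still $\KK$; this is exactly residual rationality (and it is automatic when $\KK$ is algebraically closed). In particular $\KK$ is a coefficient field of the complete DVR $\ol{R / \fracid{p}}$.

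Next I would invoke Cohen's structure theorem: a complete DVR containing a coefficient field $\KK$ is $\KK$-algebra isomorphic to $\KK \ssbr{T_\fracid{p}}$, and one may normalize the isomorphism so as to carry $T_\fracid{p}$ to the prescribed uniformizing parameter $t_\fracid{p}$ --- concretely, the continuous $\KK$-algebra homomorphism $\KK \ssbr{T_\fracid{p}} \to \ol{R / \fracid{p}}$ with $T_\fracid{p} \mapsto t_\fracid{p}$ is surjective (because $t_\fracid{p}$ generates the maximal ideal and the residue field is $\KK$) and injective (both sides being equicharacteristic DVRs with a common uniformizer), hence an isomorphism. Inverting $t_\fracid{p}$, resp.\ $T_\fracid{p}$, and passing to total rings of fractions --- using that $Q_R / \fracid{p} Q_R = Q_{R / \fracid{p}}$ and that $R / \fracid{p}$ and its normalization $\ol{R / \fracid{p}} = \KK \ssbr{t_\fracid{p}}$ share the same fraction field --- then yields
\[
\phi_\fracid{p} \colon Q_R / \fracid{p} Q_R = Q_{R / \fracid{p}} \to \KK \ssbr{T_\fracid{p}} \sbr{T_\fracid{p}^{-1}}, \quad t_\fracid{p} \mapsto T_\fracid{p},
\]
which by construction maps $\ol{R / \fracid{p}}$ onto $\KK \ssbr{T_\fracid{p}}$. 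Forming the product of the $\phi_\fracid{p}$ over $\fracid{p} \in \Min \br{R}$ and using the decomposition of $Q_R$ from the statement produces $\phi$.

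It remains to verify $\nu_R = \ord_T \circ \phi$, for which I would first identify $\VR_R$ with $\Min \br{R}$. A valuation ring $V$ of the finite product of fields $Q_R = \prod_\fracid{p} Q_R / \fracid{p} Q_R$ is, in each coordinate, either the whole field or a valuation ring of that field, and the requirement $R \subset V$ (via the inclusion $R \subset \prod_\fracid{p} R / \fracid{p}$) forces all but one coordinate, say the $\fracid{p}$-th, to be the full field; the $\fracid{p}$-th coordinate is then a discrete valuation ring of $Q_{R / \fracid{p}}$ containing $R / \fracid{p}$, hence integral over $R / \fracid{p}$, hence contained in $\ol{R / \fracid{p}}$, and since $\ol{R / \fracid{p}}$ is itself a DVR it must equal it. So $\fracid{p} \mapsto \br{\prod_{\fracid{q} \ne \fracid{p}} Q_R / \fracid{q} Q_R} \times \ol{R / \fracid{p}}$ is a bijection $\Min \br{R} \to \VR_R$, and the associated discrete valuation is $\ord_{t_\fracid{p}}$ on the $\fracid{p}$-th coordinate and $\infty$ on the others. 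Transporting this through $\phi_\fracid{p}$ gives precisely $\ord_{T_\fracid{p}}$, so assembling over all $\fracid{p}$ yields $\nu_R = \br{\nu_V}_{V \in \VR_R} = \ord_T \circ \phi$.

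I expect the only genuinely delicate points to be (i) the normalization inside Cohen's structure theorem --- obtaining an isomorphism $\KK \ssbr{T_\fracid{p}} \to \ol{R / \fracid{p}}$ that sends $T_\fracid{p}$ to the prescribed uniformizer rather than to an arbitrary one, and matching these componentwise isomorphisms with the global map on $Q_R$ --- and (ii) pinning down that the normalization does not enlarge the residue field (residual rationality). Everything else is a routine unwinding of the splitting of normalization recalled at the start of the section together with Proposition~\ref{075}.
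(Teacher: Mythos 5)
The paper offers no argument for this proposition --- its proof is a pointer to \cite[Theorems~3.44 and A.74]{korell2018} --- and your proposal follows what is surely the intended route: split off one minimal prime, show $\ol{R/\fracid{p}}$ is a complete DVR with coefficient field $\KK$, apply Cohen's structure theorem normalized at the prescribed uniformizer, and classify the valuation rings of the product $Q_R$ coordinatewise to get $\nu_R = \ord_T \circ \phi$. The outline is sound, and the point you single out as delicate is indeed the crux: you need the residue field of $\ol{R/\fracid{p}}$ to equal $\KK$, which is residual rationality and is \emph{not} part of Definition~\ref{074}. Nor is it automatic: $R = \mathbb{R}\ssbr{x,y}/\abr{x^2+y^2}$ is an algebroid curve over $\mathbb{R}$ in the sense of that definition, yet $\ol{R} \cong \CC\ssbr{t}$ and $Q_R \cong \CC\ssbr{t}\sbr{t^{-1}}$ contains a square root of $-1$, so it cannot be isomorphic to $\mathbb{R}\ssbr{T}\sbr{T^{-1}}$. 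So the proposition as printed tacitly assumes residual rationality (equivalently, admissibility of the curve; in the paper's only application, Theorem~\ref{026}, $\KK$ is algebraically closed and the issue vanishes). Your proof is correct once that hypothesis is supplied, and you were right to flag it explicitly rather than bury it.

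One local repair in your last paragraph: a valuation ring $W$ of $Q_{R/\fracid{p}}$ containing $R/\fracid{p}$ is not \enquote{hence integral over $R/\fracid{p}$} --- that inference is unjustified (and false for general overrings; think of $\ZZ \subset \ZZ_{\br{p}}$). The correct order is the reverse: $W$ is integrally closed, so $\ol{R/\fracid{p}} \subset W \subsetneq Q_{R/\fracid{p}}$, and a DVR admits no proper overring inside its fraction field other than itself, so $W = \ol{R/\fracid{p}}$. With that, the bijection $\Min\br{R} \to \VR_R$ and the formula $\nu_R = \ord_T \circ \phi$ go through as you describe.
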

\begin{proof}
See \cite[Theorems~3.44 and A.74]{korell2018}.
\end{proof}


\subsection{Blowups and Stable Ideals}


Let again $R$ be a one-dimensional semilocal Cohen--Macaulay ring. \emph{Blowing up} a regular fractional ideal $\fracid I$ of $R$, we obtain a sequence of integral extensions of $R$ (see Proposition~\ref{086})
\begin{equation}
\label{103}
R \subset \fracid{I} : \fracid{I} \subset \fracid{I}^2 : \fracid{I}^2 \subset \ldots.
\end{equation}


Note that by definition there is a regular element $x \in Q_R^\reg$ such that $x \fracid{I}$ is a regular ideal of $R$. So Remark~\ref{102}.\ref{102b} yields for any $n \in \NN$
\[
\br{x \fracid{I}}^n : \br{x \fracid{I}}^n = \br{x^n \fracid{I}^n} : \br{x^n \fracid{I}^n} = x^n \br{x^n}^{-1} \br{\fracid{I}^n : \fracid{I}^n} = \fracid{I}^n : \fracid{I}^n.
\]
Thus, we may assume that $\fracid{I}$ is a regular ideal of $R$.


We assign a name to the union of the integral extensions in \eqref{103}.


\begin{definition}
\label{099}
Let $R$ be a one-dimensional semilocal Cohen--Macaulay ring, and let $\fracid{I}$ be a regular fractional ideal of $R$. The \emph{blowup} of $\fracid I$ is the ring $R^\fracid{I} = \bigcup_{n \in \NN} \br{\fracid{I}^n : \fracid{I}^n}$.
\end{definition}


In fact, the sequence \eqref{103} stabilizes after finitely many steps.


\begin{proposition}
\label{100}
Let $R$ be a one-dimensional semilocal Cohen--Macaulay ring, and let $\fracid{I}$ be a regular fractional ideal of $R$.
\begin{statements}
\item\label{100a}
$R^\fracid{I}$ is a finitely generated $R$-module, and $R^\fracid{I} = \fracid{I}^n : \fracid{I}^n$ for all sufficiently large $n$.
\item\label{100b}
There is a regular element $x \in R^\fracid{I}$ such that $\fracid{I} R^\fracid{I} = x R^\fracid{I}$.
\end{statements}
\end{proposition}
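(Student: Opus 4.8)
The plan is to identify $\mathrm{Spec}\br{R^\fracid{I}}$ with the blow-up of $\mathrm{Spec}\br{R}$ along $\fracid{I}$ and then read both statements off standard properties of blow-ups; the one substantial point will be module-finiteness of $R^\fracid{I}$ over $R$. As noted before the statement I may assume $\fracid{I}$ is a regular ideal of $R$, and I fix a regular element $a \in \fracid{I}$. A few elementary observations first: the chain \eqref{103} is increasing (from $x \fracid{I}^n \subseteq \fracid{I}^n$ one gets $x \fracid{I}^{n+1} = x \fracid{I}^n \fracid{I} \subseteq \fracid{I}^{n+1}$); each $\fracid{I}^n : \fracid{I}^n$ is an integral extension of $R$ by Proposition~\ref{086}; and $\fracid{I}^n : \fracid{I}^n \subseteq a^{-n} \fracid{I}^n \subseteq a^{-n} R$ since $a^n \in \fracid{I}^n$. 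Using prime avoidance over the finite set of associated primes of $R$ — legitimate because $\fracid{I}$ is regular — I may choose generators $\fracid{I} = \br{c_1, \dots, c_k}$ that are all regular. Then a pigeonhole argument (a monomial of sufficiently large degree in the $c_i$ must contain a large enough power of a single $c_i$) yields
\[
R^\fracid{I} = \bigcup_n \br{\fracid{I}^n : \fracid{I}^n} = \bigcap_{i=1}^k R\sbr{\fracid{I}/c_i} = \bigcap_{c \in \fracid{I}^\reg} R\sbr{\fracid{I}/c},
\]
so $R^\fracid{I}$ is precisely the intersection of the affine charts of $\mathrm{Proj}\br{\bigoplus_n \fracid{I}^n}$, i.e.\ its ring of global sections.

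The crux is to show that $R^\fracid{I}$ is a finitely generated $R$-module, and I would argue geometrically. Since $a$ is a parameter of the one-dimensional ring $R$, the quotient $R/aR$ is Artinian, so $V\br{\fracid{I}} \subseteq V\br{aR}$ consists of finitely many maximal ideals; the blow-up is an isomorphism away from $V\br{\fracid{I}}$, and over each $\fracid{m} \in V\br{\fracid{I}}$ the fibre cone $\bigoplus_n \fracid{I}^n / \fracid{m} \fracid{I}^n$ has dimension at most $\dim R_\fracid{m} = 1$, so the fibre there (its $\mathrm{Proj}$) is finite. Hence $\mathrm{Proj}\br{\bigoplus_n \fracid{I}^n} \to \mathrm{Spec}\br{R}$ is projective with finite fibres, therefore finite; being finite over an affine scheme it is the spectrum of a module-finite $R$-algebra, which by the display above is $R^\fracid{I}$. (One could instead cite Lipman \cite{amer.j.math.93.649} here.) I expect this to be the main obstacle; in particular it cannot be reduced to finiteness of the normalisation, since $\ol{R}$ need not be module-finite over a general one-dimensional semilocal Cohen--Macaulay ring.

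Granting this, \ref{100a} follows immediately: the increasing chain $\br{\fracid{I}^n : \fracid{I}^n}_n$ of $R$-submodules of the Noetherian module $R^\fracid{I}$ stabilises, so $R^\fracid{I} = \fracid{I}^n : \fracid{I}^n$ for all $n \geq N$ and some $N$. For \ref{100b}, put $T = R^\fracid{I}$. By the universal property of blowing up, the ideal $\fracid{I}T$ of $T$ generated by $\fracid{I}$ is an invertible $T$-module; since $T$ is Noetherian and semilocal, every finitely generated invertible $T$-module is free of rank one, hence $\fracid{I}T = xT$ for some $x \in T$. Finally $x$ is regular: $\fracid{I}T$ contains $a$, which is a unit of $Q_R$ and therefore regular in $T$, so $a = xt$ with $t \in T$ forces $x$ to be a unit of $Q_R$, in particular a regular element of $R^\fracid{I}$. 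This gives $\fracid{I}R^\fracid{I} = xR^\fracid{I}$ with $x$ regular, which is \ref{100b}.
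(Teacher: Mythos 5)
The paper does not actually prove this proposition; it cites Lipman \cite{amer.j.math.93.649}, whose argument is elementary and ideal-theoretic (he produces a reduction, i.e.\ an element $a \in \fracid{I}$ and an $n$ with $a \fracid{I}^n = \fracid{I}^{n+1}$, from which both the stabilization of the chain \eqref{103} and statement \ref{100b} fall out directly). Your route through $\mathrm{Proj}\br{\bigoplus_n \fracid{I}^n}$ is therefore genuinely different, and it is sound in outline: the identification of $R^\fracid{I}$ with the global sections via the pigeonhole argument is correct, the analytic-spread bound $\dim \bigoplus_n \fracid{I}^n / \fracid{m} \fracid{I}^n \leq \dim R_\fracid{m}$ does hold without residue-field hypotheses, and ``proper $+$ quasi-finite $\Rightarrow$ finite'' together with triviality of the Picard group of a semilocal ring finishes parts \ref{100a} and \ref{100b}. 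You also correctly identify module-finiteness of $R^\fracid{I}$ as the crux and rightly note that it cannot be deduced from finiteness of $\ol{R}$. What your approach costs is machinery (Zariski's Main Theorem) where Lipman's proof needs only Nakayama-type arguments; what it buys is a conceptual picture of $R^\fracid{I}$ as the (affine!) blow-up and an immediate explanation of \ref{100b} via invertibility of $\fracid{I}\mathcal{O}_X$.

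One step is under-justified: ``by prime avoidance I may choose generators of $\fracid{I}$ that are all regular.'' Regularity of $\fracid{I}$ gives you \emph{one} element outside the union of the associated primes, not a full generating system of such elements; to get the latter you need the refined avoidance lemma (perturb each generator $c_i$ to $c_i + d_i$ with $d_i \in \fracid{j}_R \fracid{I}$ avoiding all associated primes, which requires checking that no associated prime contains $\fracid{j}_R \fracid{I}$ --- this uses that $R$ has dimension one at every maximal ideal) followed by Nakayama to see that the perturbed elements still generate. Alternatively, and more cleanly, you do not need all generators regular: a single regular $a \in \fracid{I}$ makes $a$ a non-zerodivisor on the Rees algebra, so $D_+\br{a}$ is schematically dense and $\Gamma\br{X, \mathcal{O}_X}$ embeds into $R\sbr{\fracid{I}/a} \subset Q_R$; your pigeonhole argument then runs verbatim over the charts $D_+\br{c_i}$ for an arbitrary generating set after cancelling powers of $a$. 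With either repair the proof is complete.
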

\begin{proof}
See \cite[Proposition~1.1]{amer.j.math.93.649}.
\end{proof}


It is of particular interest when the sequence \eqref{103} stabilizes at the first power of $\fracid I$.


\begin{definition}
\label{101}
Let $R$ be a one-dimensional semilocal Cohen--Macaulay ring. A regular fractional ideal $\fracid{I}$ of $R$ is called \emph{stable} if $R^\fracid{I} = \fracid{I} : \fracid{I}$.
\end{definition}


\begin{proposition}
\label{104}
Let $R$ be a one-dimensional semilocal Cohen--Macaulay ring. A regular fractional ideal $\fracid{I}$ of $R$ is stable if and only if there is an $x \in \fracid{I}$ such that $x \br{\fracid{I} : \fracid{I}} = \fracid{I}$.
\end{proposition}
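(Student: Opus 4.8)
The plan is to base both implications on the elementary identity $\fracid{I} \br{\fracid{I} : \fracid{I}} = \fracid{I}$, which holds for every regular fractional ideal: the inclusion $\fracid{I} \br{\fracid{I} : \fracid{I}} \subseteq \fracid{I}$ is the definition of the colon ideal, and since $R \subseteq \fracid{I} : \fracid{I}$ (equivalently $1 \in \fracid{I} : \fracid{I}$) we get $\fracid{I} = 1 \cdot \fracid{I} \subseteq \br{\fracid{I} : \fracid{I}} \fracid{I}$.

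For the \enquote{if} direction, suppose $x \in \fracid{I}$ satisfies $x \br{\fracid{I} : \fracid{I}} = \fracid{I}$. First I would check that $x$ is regular: picking any $z \in \fracid{I}^\reg$ and writing $z = x w$ with $w \in \fracid{I} : \fracid{I} \subseteq Q_R$, regularity of $z$ forces $x \in Q_R^\reg$. Then $\fracid{I}^2 = \fracid{I} \cdot x \br{\fracid{I} : \fracid{I}} = x \fracid{I}$ by the identity above, and an easy induction gives $\fracid{I}^{n+1} = x^n \fracid{I}$ for all $n \geq 0$. Applying Remark~\ref{102}.\ref{102b} one then computes $\fracid{I}^n : \fracid{I}^n = \br{x^{n-1} \fracid{I}} : \br{x^{n-1} \fracid{I}} = \fracid{I} : \fracid{I}$ for every $n \geq 1$, so the chain \eqref{103} is already constant after the first step and $R^\fracid{I} = \fracid{I} : \fracid{I}$; that is, $\fracid{I}$ is stable.

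For the \enquote{only if} direction, assume $\fracid{I}$ is stable, i.e.\ $R^\fracid{I} = \fracid{I} : \fracid{I}$. Proposition~\ref{100}.\ref{100b} supplies a regular $x \in R^\fracid{I}$ with $\fracid{I} R^\fracid{I} = x R^\fracid{I}$; substituting $R^\fracid{I} = \fracid{I} : \fracid{I}$ and using the identity once more, the left-hand side becomes $\fracid{I}$, so $\fracid{I} = x \br{\fracid{I} : \fracid{I}}$. Finally $x \in \fracid{I}$, because $x = x \cdot 1 \in x \br{\fracid{I} : \fracid{I}} = \fracid{I}$ as $1 \in \fracid{I} : \fracid{I}$.

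The only point requiring care --- and the place I would be most careful in the write-up --- is the bookkeeping around regularity and membership: that the $x$ obtained in the \enquote{only if} direction genuinely lies in $\fracid{I}$ rather than merely in $R^\fracid{I}$, and that the $x$ given in the \enquote{if} direction is a regular element, so that the manipulations with powers and colon ideals are legitimate. Both facts are immediate from $\fracid{I}$ being regular together with $1 \in \fracid{I} : \fracid{I}$.
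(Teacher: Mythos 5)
Your proof is correct. The \enquote{only if} direction is essentially identical to the paper's: both invoke Proposition~\ref{100}.\ref{100b} to get a regular $x \in R^\fracid{I} = \fracid{I} : \fracid{I}$ with $\fracid{I} R^\fracid{I} = x R^\fracid{I}$, rewrite this as $x \br{\fracid{I} : \fracid{I}} = \fracid{I} \br{\fracid{I} : \fracid{I}} = \fracid{I}$, and deduce $x \in \fracid{I}$ from $1 \in \fracid{I} : \fracid{I}$. The \enquote{if} direction is where you diverge: the paper stops at $\fracid{I}^2 = x \fracid{I}$ and cites Lipman's Lemma~1.11(i) from \cite{amer.j.math.93.649} to conclude stability, whereas you push the computation through yourself --- verifying that $x$ is regular (correctly, from $\fracid{I}^\reg \ne \emptyset$ and $Q_R^\reg = Q_R^\ast$), establishing $\fracid{I}^{n+1} = x^n \fracid{I}$ by induction, and then using Remark~\ref{102}.\ref{102b} to see that every term $\fracid{I}^n : \fracid{I}^n$ of the chain \eqref{103} equals $\fracid{I} : \fracid{I}$, so $R^\fracid{I} = \fracid{I} : \fracid{I}$ directly from Definition~\ref{099}. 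Your version is self-contained and makes the mechanism of Lipman's lemma explicit in this special case, at the cost of a little more bookkeeping (the regularity check on $x$, which the paper's citation absorbs); the paper's version is shorter but leans on an external reference.
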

\begin{proof}
If $\fracid{I}$ is stable, then by Proposition~\ref{100}.\ref{100b} there is an $x \in R^\fracid{I}$ such that $x \br{\fracid{I} : \fracid{I}} = \fracid{I} \br{\fracid{I} : \fracid{I}} = \fracid{I}$. Since $1 \in \fracid I : \fracid I$, this implies $x \in \fracid I$.

Conversely, suppose that there is an $x \in \fracid{I}$ such that $x \br{\fracid{I} : \fracid{I}} = \fracid{I}$. Then $\fracid{I}^2 = x \br{\fracid{I} : \fracid{I}} \fracid{I} = x \fracid{I}$. Hence, $\fracid{I}$ is stable by \cite[Lemma~1.11(i)]{amer.j.math.93.649}.
\end{proof}


\section{Good Semigroups}
\label{109}


Based on conditions \ref{S0}, \ref{S1}, and \ref{S2}, Barucci, D'Anna, and Fr\"oberg introduced the notion of \emph{good semigroups} \cite{j.pure.app.alg.147.215}. Before defining good semigroups, we describe these conditions more generally.


\begin{definition}
\label{005}
Let $I$ be a finite set, and consider on $\ZZ^I$ the natural partial order. We denote $N = \cbr{\alpha \in \ZZ^I \mid \alpha \geq \zero}$. For a subset $E$ of $\ZZ^I$ we consider the following properties (see \cite[Section~1]{man.math.61.285} and \cite[Section~2]{comm.algebra.25.2939}).
\begin{enumerate}[label=(E\arabic*),ref=(E\arabic*)]
\setcounter{enumi}{-1}
\item\label{E0}
There exists an $\alpha \in \ZZ^I$ such that $\alpha + N \subset E$.
\item\label{E1}
If $\alpha, \beta \in E$, then $\inf \br{\alpha, \beta} = \br{\min \cbr{\alpha_i, \beta_i}}_{i \in I} \in E$.
\item\label{E2}
For any $\alpha, \beta \in E$ with $\alpha_j = \beta_j$ for some $j \in I$, there exists an $\varepsilon \in E$ such that $\varepsilon_j > \alpha_j = \beta_j$ and $\varepsilon_i \ge \min \cbr{\alpha_i, \beta_i}$ for all $i \in I$, where equality is obtained whenever $\alpha_i \ne \beta_i$.
\end{enumerate}
\end{definition}


We are now equipped to introduce good semigroups.


\begin{definition}
\label{096}
Let $S$ be a submonoid of $\ZZ^I$, where $I$ is a finite set. Then $S$ is called a \emph{good semigroup} if it satisfies the following.
\begin{deflist}
\item\label{096a}
The unique minimal element of $S$ is $\zero$, i.e.\ any non-zero element of $S$ is comparable to and larger than $\zero$ with respect to the natural partial order on $\ZZ^I$.
\item\label{096b}
Properties~\ref{E1}, \ref{E1}, and \ref{E2} hold for $S$.
\end{deflist}
Throughout this article, we refer to $I$ without further explanation.
\end{definition}


These semigroups have a \enquote{tropical} semiring structure. In order to view this structure parallel to the structure of admissible rings, we restrict our considerations to certain semigroup ideals corresponding to the value sets of fractional ideals.


\begin{definition}
\label{031}
Let $S$ be a good semigroup.
\begin{deflist}
\item\label{031a}
We set $\ol S = \cbr{\alpha \in \ZZ^I \mid \alpha \geq \zero} \cong \NN^I$.
\item\label{031b}
A good semigroup $S$ is said to be a \emph{numerical semigroup} if $\vbr{I} = 1$.
\item\label{031c}
A \emph{semigroup ideal} of a good semigroup $S$ is a non-empty subset $E$ of $\ZZ^I$ such that $E + S \subset E$ and $\alpha + E \in S$ for some $\alpha \in S$.
\item\label{031d}
A \emph{good semigroup ideal} of a good semigroup $S$ is a semigroup ideal $E$ of $S$ satisfying properties~\ref{E1} and \ref{E2}.
\item\label{031e}
For a good semigroup $S$ we denote by $\GSI_S$ the set of all good semigroup ideals of $S$.
\end{deflist}
\end{definition}


In fact, the definitions of good semigroups and good semigroup ideals describe value sets of admissible rings and their fractional ideals.


\begin{proposition}
\label{091}
Let $R$ be an admissible ring. Then $\Gamma_R$ is a good semigroup with $I = \VR_R$, and $\Gamma_\fracid{I} \in \GSI_{\Gamma_R}$ for every $\fracid{I} \in \RFI_R$.
\end{proposition}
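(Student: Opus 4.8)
The plan is to verify each of the properties \ref{S0}, \ref{S1}, and \ref{S2} (equivalently \ref{E0}, \ref{E1}, \ref{E2}) directly, working from the isomorphism between an admissible ring and a product of power-series-like rings, as recorded in the sketch following conditions \ref{S0}--\ref{S2}. Since an admissible ring is analytically reduced, its completion $\wh R$ is a reduced complete semilocal ring; the splitting-of-normalization diagram presents $\ol{\wh R}$ as a finite product of discrete valuation rings, indexed by $\Min \br{\wh R}$, which we identify with $\VR_R$ via Proposition~\ref{029}.\ref{029a} (so that passing to $\wh R$ changes neither $\VR_R$ nor $\nu_R$ nor $\Gamma_R$ and $\Gamma_{\fracid I}$). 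Thus I may as well argue in $\wh R$, where residual rationality and large residue fields give the coordinate description from Proposition~\ref{078} in the algebroid case; for general admissible rings the same large-residue-field hypothesis supplies, at each place, enough elements of the residue field to perform (or avoid) cancellation of leading terms, as explained in \cite[Section~3]{j.commut.algebra.11.81}.

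First I would record \ref{S0}/\ref{E0}: since $\ol R$ is a finite $R$-module and is regular (being a product of DVRs), the conductor $\fracid C_R = R : \ol R$ is a regular ideal of both $R$ and $\ol R$; picking any $c \in \fracid C_R^\reg$ and setting $\gamma = \nu_R \br{c}$, one gets $\gamma + \NN^{\VR_R} = \nu_R \br{c \ol R} \subset \nu_R \br{R^\reg} = \Gamma_R$, because $c \ol R \subset R$ and $\nu_R$ restricted to $\ol R$ is surjective onto $\NN^{\VR_R}$. For a regular fractional ideal $\fracid I$ the same works with any regular element of the conductor $\fracid I : \ol R$, which is regular by Remark~\ref{102}.\ref{102e}, so $\Gamma_{\fracid I}$ satisfies \ref{E0}; and $\Gamma_{\fracid I} + \Gamma_R \subset \Gamma_{\fracid I}$ together with $\nu_R \br{a \fracid I} \subset \Gamma_R$ for suitable regular $a$ gives that $\Gamma_{\fracid I}$ is a semigroup ideal in the sense of Definition~\ref{031}.\ref{031c}. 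That $\zero$ is the unique minimal element of $\Gamma_R$ (Definition~\ref{096}.\ref{096a}) is immediate from $\Gamma_R \subset \NN^{\VR_R}$ and $0 \in \Gamma_R$.

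Next, \ref{S1}/\ref{E1}: given $x, y \in \fracid I^\reg$ with $\nu_R \br{x} = \alpha$, $\nu_R \br{y} = \beta$, I want $z \in \fracid I^\reg$ with $\nu_R \br{z} = \inf \br{\alpha,\beta}$. In each place $V$ where $\alpha_V \ne \beta_V$ the element $x+y$ already has value $\min \cbr{\alpha_V,\beta_V}$; the difficulty is only the places where $\alpha_V = \beta_V$, where $x + y$ might have strictly larger value because leading terms cancel. Here the large-residue-field hypothesis enters: one replaces $y$ by $uy$ for a suitable unit $u \in R$ whose residues at the finitely many relevant maximal ideals are chosen to avoid all such cancellations simultaneously — this is exactly the counting argument $\vbr{R/\fracid m} \geq \vbr{\VR_{R_\fracid m}}$ is designed to make possible — and then $z = x + uy \in \fracid I^\reg$ has $\nu_R \br{z} = \inf \br{\alpha,\beta}$. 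For \ref{S2}/\ref{E2} one argues dually: with $\alpha_V = \beta_V$ at some place $V$, choose the unit $u$ so that the leading terms at $V$ \emph{do} cancel while at every other place $W$ with $\alpha_W \ne \beta_W$ they do not; then $\varepsilon = \nu_R \br{x - uy}$ satisfies $\varepsilon_V > \alpha_V$ and $\varepsilon_W = \min \cbr{\alpha_W,\beta_W}$ whenever $\alpha_W \ne \beta_W$, and $\varepsilon_W \ge \min \cbr{\alpha_W,\beta_W}$ always since $\nu_R$ is a (componentwise) valuation. The identical unit-twisting argument applies verbatim to elements of $\fracid I^\reg$, since $\fracid I$ is an $R$-module, which gives \ref{E1} and \ref{E2} for $\Gamma_{\fracid I}$ and hence $\Gamma_{\fracid I} \in \GSI_{\Gamma_R}$.

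The main obstacle is the book-keeping in \ref{S1} and \ref{S2}: one must show that a single unit $u \in R$ can be chosen to control the leading-coefficient behaviour at all relevant places at once (forcing non-cancellation at some, cancellation at one, for a total of at most $\vbr{\VR_R}$ constraints), and that such $u$ exists because the residue field is large enough and $R$ is residually rational so the residues of $x, y$ at the various places live in a common field. This is precisely the content worked out in \cite[Section~3]{j.commut.algebra.11.81} and \cite{comm.algebra.25.2939}, so in the write-up I would state the coordinate normal form, isolate the one-place cancellation lemma, and then assemble \ref{S0}--\ref{S2} from it, citing those sources for the detailed residue-field count rather than reproducing it.
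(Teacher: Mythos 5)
Your outline is correct and matches the approach of the paper, whose entire proof of this proposition is the citation \cite[Remark~4.1.2.(d)]{j.commut.algebra.11.81}: you reconstruct the content of that reference (the conductor argument for \ref{E0}, and the unit-twisting, large-residue-field argument for \ref{E1} and \ref{E2}, noting that the nontrivial case of \ref{E1} leaves at most $\vbr{\VR_{R_{\fracid m}}}-2$ branches with equal values, which is what makes the count $\vbr{R/\fracid m} \geq \vbr{\VR_{R_{\fracid m}}}$ suffice) while deferring the same residue-field bookkeeping to the same sources. The only slip is the appeal to Proposition~\ref{029}.\ref{029a} to pass to $\wh R$ — that statement concerns integral extensions inside $Q_R$, not completions, and the relevant fact $\Gamma_R = \Gamma_{\wh R}$ is \cite[Theorem~3.3.5]{j.commut.algebra.11.81} — but this reduction is not actually needed, since via Proposition~\ref{075} the leading-coefficient argument can be run directly in $R$.
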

\begin{proof}
See \cite[Remark~4.1.2.(d)]{j.commut.algebra.11.81}.
\end{proof}


However, value semigroups of admissible rings are not characterized as good semigroups. An example of a good semigroup which is not the value semigroups of an admissible ring can be found in \cite[Example~2.16]{j.pure.app.alg.147.215}.


\begin{remark}
\label{032}\
\begin{statements}
\item\label{032d}
Let $S$ be a good semigroup in $\ZZ^I$. Then its group of differences is $\ZZ^I$.
\item\label{032a}
If $S$ is a good semigroup, any semigroup ideal $E$ of $S$ satisfies property \ref{E0} since $S$ does and $E + S \subset E$.
\item\label{032b}
If $S$ is a numerical semigroup, then $E \in \GSI_S$ for every semigroup ideal $E$ of $S$.
\item\label{032c}
If $S$ and $S'$ are good semigroups with $S \subset S' \subset \ol S$, then $\ol{S'} = \ol S$. It follows that $\GSI_{S'} \subset \GSI_S$, and, in particular, $S' \in \GSI_S$.
\end{statements}
See~\cite[Remark~4.1.2]{j.commut.algebra.11.81}.
\end{remark}


\begin{definition}
\label{050}
Let $S$ be a good semigroup, and let $J \subset I$. We denote by $\pr_J \colon \ZZ^I \to \ZZ^J$ the projection map. For a subset $E$ of $\ZZ^I$ we call $\pr_J \br{E}$ the \emph{projection} of $E$, and we denote it by $E_J$. We write $E_i = E_{\cbr{i}}$ for $i \in I$.
\end{definition}


In fact, the projection of a good semigroup $S$ onto a subset $J$ of $I$ is again a good semigroup, and $E_J \in \GSI_{S_J}$ for every $E \in \GSI_S$, see \cite[Proposition~4.59]{korell2018} and \cite[Proposition~2.2]{j.pure.app.alg.147.215}.


\begin{definition}
\label{051}\
\begin{deflist}
\item\label{051a}
Let $R$ be an admissible ring, and let $\fracid{I}$ be an $R$-submodule of $Q_R$. For $\alpha \in \ZZ^I$ we set
\[
\fracid{I}^\alpha = \cbr{x \in \fracid{I} \mid \nu \br{x} \geq \alpha} = \fracid I \cap \br{Q_R}^\alpha.
\]
\item\label{051b}
Let $S$ be a good semigroup, and let $E \subset \ZZ^I$. For $\alpha \in \ZZ^I$ we set
\[
E^\alpha = \cbr{\beta \in E \mid \beta \geq \alpha} = E \cap \br{\ZZ^I}^\alpha.
\]
\end{deflist}
\end{definition}


\begin{remark}
\label{025}\
\begin{statements}
\item\label{025a}
Let $R$ be an admissible ring, and let $\fracid{I} \in \RFI_R$. For any $\alpha \in \ZZ^I$ the set $\fracid{I}^\alpha$ is a regular fractional ideal of $R$ with $\Gamma_{\fracid{I}^\alpha} = \br{\Gamma_\fracid{I}}^\alpha$.
\item\label{025b}
Let $S$ be a good semigroup, and let $E \in \GSI_S$. For any $\alpha \in \ZZ^I$ the set $E^\alpha$ is a good semigroup ideal of $S$.
\end{statements}
\end{remark}


\subsection{Localization}


Considering only local algebroid curves, Delgado found that $\zero$ is the only element of the value semigroup with a zero component. Accordingly, a good semigroup $S$ has been called \emph{local} if $\zero$ is the only element of $S$ with a zero component \cite{j.pure.app.alg.147.215,j.commut.algebra.11.81,korell2018}.

In a local good semigroup $S$, the set $M_S = S \setminus \cbr{\zero}$ is a good semigroup ideal, and it is called the \emph{maximal ideal} of $S$. In fact, an admissible ring $R$ is local if and only if its semigroup of values $\Gamma_R$ is local, and if $\fracid m$ is the maximal ideal of $R$, then $\Gamma_{\fracid m} = M_{\Gamma_R}$ (see \cite[page~6]{j.pure.app.alg.147.215}). We extend this definition to an intrinsic description of maximal ideals of good semigroups (see Definition~\ref{007}).

It is known that a good semigroup is semilocal in the sense that it decomposes uniquely into a product of finitely many local good semigroups. Moreover, this decomposition is compatible with localizations of admissible rings: If $R$ is an admissible ring, then $\Gamma_R = \prod_{\fracid m \in \Max \br{R}} \Gamma_{R_{\fracid m}}$, see \cite[§~1.1]{j.pure.app.alg.147.215} and \cite[Theorems~3.2.2 and 4.1.6]{j.commut.algebra.11.81}.


\begin{definition}
\label{007}
Let $S$ be a good semigroup.
\begin{enumerate}[label=(\arabic*),ref=(\arabic*)]
\item\label{007a}
A good semigroup ideal $M$ of $S$ with $M \subsetneq S$ is called \emph{maximal} if for every good semigroup ideal $E$ of $S$ with $M \subset E \subsetneq S$ we have $E = M$.
\item\label{007b}
We denote the set of maximal ideals of $S$ by $\Max \br{S}$.
\item\label{007c}
$S$ is called \emph{local} if it has a unique maximal ideal $M_S$.
\end{enumerate}
\end{definition}


\begin{remark}
\label{095}
Any numerical semigroup is a local good semigroup.
\end{remark}


It is easy to see that any good semigroup is \enquote{semilocal}:


\begin{lemma}
\label{068}
Let $S$ be a good semigroup. Then $\Max \br{S} = \cbr{S^{\ee_i} \mid i \in I}$.
\end{lemma}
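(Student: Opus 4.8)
The plan is to prove the set equality $\Max\br{S} = \cbr{S^{\ee_i} \mid i \in I}$ by checking the two inclusions, after recording a couple of preliminary facts. Since $S$ is a good semigroup, $S \subset \ol S = N$ by Definitions~\ref{096}.\ref{096a} and~\ref{031}.\ref{031a}; hence for $\gamma \in S$ the relation $\gamma \geq \ee_i$ just means $\gamma_i \geq 1$, so that $S^{\ee_i} = \cbr{\gamma \in S \mid \gamma_i > 0}$. Moreover $S \in \GSI_S$ (e.g.\ by Remark~\ref{032}.\ref{032c} applied with $S' = S$), so Remark~\ref{025}.\ref{025b} shows that each $S^{\ee_i}$ is a good semigroup ideal of $S$, and it is a proper one because $\zero \in S \setminus S^{\ee_i}$.

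The heart of the argument will be the claim that \emph{every good semigroup ideal $E$ of $S$ with $E \subsetneq S$ satisfies $E \subset S^{\ee_i}$ for some $i \in I$.} I would prove this by contraposition: if $E \not\subset S^{\ee_i}$ for every $i$, then for each $i$ I pick $\beta^{(i)} \in E \setminus S^{\ee_i}$, and since $\beta^{(i)} \in E \subset S \subset N$ while $\beta^{(i)} \not\geq \ee_i$, necessarily $\beta^{(i)}_i = 0$. As $I$ is finite and $E$ satisfies property~\ref{E1}, the infimum $\beta := \inf_{i \in I} \beta^{(i)}$ belongs to $E$; but $\beta_k \geq 0$ (because $\beta \in N$) and $\beta_k \leq \beta^{(k)}_k = 0$ for every $k \in I$, so $\beta = \zero \in E$, whence $S = \zero + S \subset E$ and $E = S$, contrary to $E \subsetneq S$. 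This proves the claim.

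Granting the claim, the inclusion $\Max\br{S} \subset \cbr{S^{\ee_i} \mid i \in I}$ is immediate: if $M \in \Max\br{S}$, then $M \subsetneq S$, so the claim provides some $i$ with $M \subset S^{\ee_i}$; since $S^{\ee_i}$ is a good semigroup ideal with $M \subset S^{\ee_i} \subsetneq S$, maximality of $M$ forces $M = S^{\ee_i}$. For the reverse inclusion I would also need a rigidity property of the ideals $S^{\ee_i}$, which I expect to be the trickiest point: \emph{if $S^{\ee_i} \subset S^{\ee_j}$ then $S^{\ee_i} = S^{\ee_j}$.} Suppose, for a contradiction, that $S^{\ee_i} \subsetneq S^{\ee_j}$, and pick $\delta \in S^{\ee_j} \setminus S^{\ee_i}$, so that $\delta \in S$ with $\delta_j > 0$ and $\delta_i = 0$. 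Applying property~\ref{E2} in $S$ to the pair $\delta, \zero$, which agree in coordinate $i$, yields an $\varepsilon \in S$ with $\varepsilon_i > 0$ and with $\varepsilon_k = 0$ for every $k$ such that $\delta_k \neq 0$; in particular $\varepsilon_j = 0$. Then $\varepsilon \in S^{\ee_i} \setminus S^{\ee_j}$, contradicting $S^{\ee_i} \subset S^{\ee_j}$.

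With both tools available, the inclusion $\cbr{S^{\ee_i} \mid i \in I} \subset \Max\br{S}$ follows: given any $E \in \GSI_S$ with $S^{\ee_i} \subset E \subsetneq S$, the claim yields some $j$ with $E \subset S^{\ee_j}$, hence $S^{\ee_i} \subset S^{\ee_j}$, so $S^{\ee_i} = S^{\ee_j}$ by the rigidity property, and therefore $S^{\ee_i} \subset E \subset S^{\ee_j} = S^{\ee_i}$, i.e.\ $E = S^{\ee_i}$. Thus no good semigroup ideal of $S$ lies strictly between $S^{\ee_i}$ and $S$, which is precisely the assertion $S^{\ee_i} \in \Max\br{S}$. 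Apart from the single applications of properties~\ref{E1} and~\ref{E2} highlighted above, every step is routine bookkeeping with the definitions; in particular the proof requires neither Zorn's lemma nor the decomposition of $S$ into local good semigroups.
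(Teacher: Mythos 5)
Your proof is correct, and its core step is the same as the paper's: using property~\ref{E1} to show that any good semigroup ideal $E \subsetneq S$ omitting $\zero$ must have some coordinate $i$ with $\alpha_i > 0$ for all $\alpha \in E$, hence $E \subset S^{\ee_i}$. The one genuine addition is your rigidity claim that $S^{\ee_i} \subset S^{\ee_j}$ forces equality, proved by applying \ref{E2} to the pair $\delta, \zero$; the paper's proof jumps straight from the containment claim to the set equality $\Max\br{S} = \cbr{S^{\ee_i} \mid i \in I}$, leaving implicit why no $S^{\ee_i}$ can sit strictly inside another (which is exactly what is needed to see that each $S^{\ee_i}$ is itself maximal). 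So your write-up is, if anything, more complete than the one in the paper.
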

\begin{proof}
Let $E$ be a good semigroup ideal of $S$ with $E \subsetneq S$. Since $E$ satisfies property~\ref{E1}, there is an $i \in I$ such that $\alpha_i > 0$ for all $\alpha \in E$ (otherwise $\zero \in E$). This implies $E \subset S^{\ee_i}$. Therefore, we have $\Max \br{S} = \cbr{S^{\ee_i} \mid i \in I}$ (see Remark~\ref{025}.\ref{025b}).
\end{proof}


\begin{remark}
\label{107}
In particular, Lemma~\ref{068} relates our definition of maximal ideals and local good semigroups to the one used for example in~\cite{j.pure.app.alg.147.215,j.commut.algebra.11.81,korell2018}. In fact, a good semigroup $S$ is local if and only if its unique maximal ideal is $M_S = \bigcap_{i \in I} S^{\ee_i} = S^\one = S \setminus \cbr{\zero}$.
\end{remark}


With a notion of maximal ideals of good semigroups, we could use the semiring structure to define localizations in the \enquote{usual} way. It turns out, however, that it suffices to consider projections.


\begin{definition}
\label{045}
Let $S$ be a good semigroup, and let $E \in \GSI_S$. For a maximal ideal $M$ of $S$ we define the \emph{localization} of $E$ with respect to $M$ as $E_M = E_{I \setminus I_M}$, where $E_{I \setminus I_M}$ is the projection of Definition~\ref{050} with $I_M = \cbr{i \in I \mid 0 \in M_i}$.
\end{definition}


With the above definitions, the decompositions of good semigroups, respectively value semigroups, as described in \cite[Corollary~3.2.3 and Theorem~4.1.6]{j.commut.algebra.11.81} can be reformulated in terms of localizations. The details of this reformulation are left to the reader.


\begin{theorem}
\label{105}
\pushQED{\qed}
Let $S$ be a good semigroup, and let $R$ be an admissible ring.
\begin{statements}
\item\label{105a}
For every maximal ideal $M$ of $S$ the good semigroup $S_M$ is local with maximal ideal $M_M$.
\item\label{105b}
Every good semigroup ideal $E$ of $S$ decomposes as $E = \prod_{M \in \Max \br{S}} E_M$.
\item\label{105c}
There is a bijection
\[
\Max \br{R} \to \Max \br{\Gamma_R}, \quad \fracid{m} \mapsto \Gamma_\fracid{m}.
\]
(note that maximal ideals of $R$ must be regular as $R$ is equidimensional, see \cite[Proposition~B.27]{korell2018}). In particular, $R$ is local if and only if $\Gamma_R$ is local.
\item\label{105d}
For $\fracid I \in \RFI_R$ and $\fracid m \in \Max \br{R}$, we have $\Gamma_{\fracid I_{\fracid m}} = \br{\Gamma_\fracid I}_{\Gamma_{\fracid m}}$.
\end{statements}
In particular, for every $\fracid I$ in $\RFI_R$ we have $\Gamma_{\fracid I} = \prod_{M \in \Max \br{\Gamma_R}} \br{\Gamma_{\fracid I}}_M = \prod_{\fracid m \in \Max \br{R}} \br{\Gamma_{\fracid I}}_{\Gamma_{\fracid m}}$.
\qedhere
\end{theorem}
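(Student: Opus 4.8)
The plan is to derive all four statements from the already-known product decomposition of a good semigroup into local factors and the corresponding decompositions of its good semigroup ideals and of the value semigroup ideals of an admissible ring (\cite[Theorem~3.2.2, Corollary~3.2.3, and Theorem~4.1.6]{j.commut.algebra.11.81}), and then to re-express these decompositions in the intrinsic language of Definitions~\ref{007} and~\ref{045}. So I fix a good semigroup $S \subset \ZZ^I$ and write $S = \prod_k T_k$ with $I = \bigsqcup_k I_k$ and each $T_k$ a local good semigroup in $\ZZ^{I_k}$.

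The first step is to match $\Max\br{S}$ with the blocks. By Lemma~\ref{068} every maximal ideal of $S$ has the form $S^{\ee_i}$. Writing $i \in I_k$ and using that $T_k$ is local, so that its only element with a vanishing coordinate is $\zero$ (Remark~\ref{107}), one computes $S^{\ee_i} = \cbr{\alpha \in S \mid \alpha_i \geq 1} = \br{T_k \setminus \cbr{\zero}} \times \prod_{l \neq k} T_l =: M_k$, which depends only on $k$. Reading coordinates off this product, $0 \in \br{M_k}_j$ precisely for $j \notin I_k$, hence $I_{M_k} = I \setminus I_k$, so $S_{M_k} = S_{I \setminus I_{M_k}} = \pr_{I_k}\br{S} = T_k$; this is local with maximal ideal $T_k \setminus \cbr{\zero} = \pr_{I_k}\br{M_k} = \br{M_k}_{M_k}$, which gives \ref{105a}. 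For \ref{105b}, the cited decomposition says that any $E \in \GSI_S$ splits as $E = \prod_k E_k$ with $E_k \in \GSI_{T_k}$, and since $E_k = \pr_{I_k}\br{E} = E_{I \setminus I_{M_k}} = E_{M_k}$, this is exactly $E = \prod_{M \in \Max\br{S}} E_M$.

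For \ref{105c} and \ref{105d} I would bring in the ring-theoretic half of the cited results: $\VR_R = \bigsqcup_{\fracid m \in \Max\br{R}} \VR_{R_\fracid m}$, each $\Gamma_{R_\fracid m}$ is a local good semigroup in $\ZZ^{\VR_{R_\fracid m}}$, and $\Gamma_R = \prod_{\fracid m} \Gamma_{R_\fracid m}$ while $\Gamma_{\fracid I} = \prod_{\fracid m} \Gamma_{\fracid I_\fracid m}$ for every $\fracid I \in \RFI_R$. Thus the blocks of $\Gamma_R$ are precisely the sets $\VR_{R_\fracid m}$, so by the previous step $\Max\br{\Gamma_R} = \cbr{M_\fracid m \mid \fracid m \in \Max\br{R}}$ with $M_\fracid m$ the maximal ideal whose block is $\VR_{R_\fracid m}$. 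It remains to identify $M_\fracid m = \Gamma_\fracid m$ (here $\Gamma_\fracid m \in \RFI_R$ since $\fracid m$ is regular): applying $\Gamma_{\fracid I} = \prod_\fracid n \Gamma_{\fracid I_\fracid n}$ to $\fracid I = \fracid m$ and using $\br{\fracid m}_\fracid n = R_\fracid n$ for $\fracid n \neq \fracid m$, $\br{\fracid m}_\fracid m = \fracid m_{R_\fracid m}$, and $\Gamma_{\fracid m_{R_\fracid m}} = M_{\Gamma_{R_\fracid m}}$ (the local case recalled before Definition~\ref{007}, applied to the local admissible ring $R_\fracid m$), one gets $\Gamma_\fracid m = M_{\Gamma_{R_\fracid m}} \times \prod_{\fracid n \neq \fracid m} \Gamma_{R_\fracid n} = M_\fracid m$. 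Hence $\fracid m \mapsto \Gamma_\fracid m$ is the composite of the bijections $\Max\br{R} \to \cbr{\text{blocks}} \to \Max\br{\Gamma_R}$, which is \ref{105c}. From the same product expression, $I_{\Gamma_\fracid m} = I \setminus \VR_{R_\fracid m}$, so $\br{\Gamma_\fracid I}_{\Gamma_\fracid m} = \pr_{\VR_{R_\fracid m}}\br{\Gamma_\fracid I} = \Gamma_{\fracid I_\fracid m}$, which is \ref{105d}; the concluding displayed identity then follows from \ref{105b} applied to $E = \Gamma_\fracid I \in \GSI_{\Gamma_R}$ (Proposition~\ref{091}) combined with \ref{105c} and \ref{105d}.

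I expect the main obstacle to be organizational rather than mathematical: three indexing sets appear at once — the abstract local factors $T_k$, the coordinate set $I = \VR_R$, and $\Max\br{R}$ — and one must verify each time that $I \setminus I_M$ returns the intended block. The only genuinely non-formal ingredients are the product decomposition of good semigroup ideals and its compatibility with localization of admissible rings; once these are quoted from \cite{j.commut.algebra.11.81}, everything else is reading off coordinates from products and composing bijections.
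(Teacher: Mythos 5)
Your proposal is correct and follows exactly the route the paper intends: the paper gives no written proof, stating only that the decompositions of \cite[Corollary~3.2.3 and Theorem~4.1.6]{j.commut.algebra.11.81} "can be reformulated in terms of localizations" with details left to the reader, and your argument is precisely that reformulation, using Lemma~\ref{068} to match the maximal ideals $S^{\ee_i}$ with the local factors and checking that $I_M$ recovers the complementary block. The bookkeeping (including the identification $\Gamma_{\fracid m} = M_{\Gamma_{R_{\fracid m}}} \times \prod_{\fracid n \neq \fracid m} \Gamma_{R_{\fracid n}}$ via localizing $\fracid m$ at each $\fracid n$) is accurate.
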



\subsection{Differences}


Products and quotients are important operations for fractional ideals. The obvious combinatorial counterparts appear to be sums and differences.


\begin{definition}
\label{054}
Let $S$ be a good semigroup, and let $E, F \subset _S$. The \emph{difference} of $E$ and $F$ is
\[
E - F = \cbr{\alpha \in \ZZ^I \mid \alpha + F \subset E}.
\]
\end{definition}


However, good semigroup ideals are not closed under these operations.


\begin{remark}
\label{055}
Let $S$ be a good semigroup, and let $E$ and $F$ be two semigroup ideals of $S$. Then $E + F$ and $E - F$ are semigroup ideals of $S$ (see \cite[Lemma~4.18]{korell2018}). However, $E, F \in \GSI_S$ does in general neither imply $E + F \in \GSI_S$ (see Example~\ref{087} below) nor $E - F \in \GSI_S$ (see Example~\ref{088} below).
\end{remark}


In particular, sums and differences are not compatible with their algebraic counterparts.


\begin{remark}
\label{022}
Let $R$ be an admissible ring, and let $\fracid{I}, \fracid{J} \in \RFI_R$.
\begin{enumerate}[label=(\arabic*),ref=(\arabic*)]
\item\label{022a}
If $\fracid{I} \subset \fracid{J}$, then $\Gamma_\fracid{I} \subset \Gamma_\fracid{J}$.
\item\label{022b}
$\Gamma_\fracid{I} + \Gamma_\fracid{J} \subset \Gamma_{\fracid{I} \fracid{J}}$. In general, however, this inclusion is not an equality, see Example~\ref{087} below.
\item\label{022c}
$\Gamma_{\fracid{I}:\fracid{J}} \subset \Gamma_{\fracid{I}} - \Gamma_\fracid{J}$. In general, however, this inclusion is not an equality, see Example~\ref{088} below.
\end{enumerate}
See \cite[Remark~3.1.10]{j.commut.algebra.11.81}.
\end{remark}


\begin{figure}
\begin{center}
\begin{tikzpicture}[inner sep=1.5,scale=0.5]
\draw[->] (0,0) -- (0,6);
\draw[->] (0,0) -- (10,0);

\foreach \i in {0,...,9} \foreach \j in {0,...,5} \draw (\i,\j) node[shape=circle,draw,fill=white] {};
\draw (0,0) node[shape=circle,draw,fill=black] {};
\foreach \i in {0,...,6} \foreach \j in {0,...,4} \draw (3+\i,1+\j) node[shape=circle,draw,fill=black] {};

\draw (5,6) node {$\Gamma_R$};
\end{tikzpicture}\quad
\begin{tikzpicture}[inner sep=1.5,scale=0.5]
\draw[->] (0,0) -- (0,6);
\draw[->] (0,0) -- (10,0);

\foreach \i in {0,...,9} \foreach \j in {0,...,5} \draw (\i,\j) node[shape=circle,draw,fill=white] {};
\foreach \i in {0,...,4} \draw (2,1+\i) node[shape=circle,draw,fill=black] {};
\draw (3,1) node[shape=circle,draw,fill=black] {};
\foreach \i in {0,...,4} \foreach \j in {0,...,3} \draw (5+\i,2+\j) node[shape=circle,draw,fill=black] {};

\draw (5,6) node {$\Gamma_\fracid{I}$};
\end{tikzpicture}\medskip\\\noindent
\begin{tikzpicture}[inner sep=1.5,scale=0.5]
\draw[->] (0,0) -- (0,6);
\draw[->] (0,0) -- (10,0);

\foreach \i in {0,...,9} \foreach \j in {0,...,5} \draw (\i,\j) node[shape=circle,draw,fill=white] {};
\draw (3,1) node[shape=circle,draw,fill=black] {};
\foreach \i in {0,...,5} \foreach \j in {0,...,3} \draw (4+\i,2+\j) node[shape=circle,draw,fill=black] {};

\draw (5,6) node {$\Gamma_\fracid{J}$};
\end{tikzpicture}\quad
\begin{tikzpicture}[inner sep=1.5,scale=0.5]
\draw[->] (0,0) -- (0,6);
\draw[->] (0,0) -- (10,0);

\foreach \i in {0,...,9} \foreach \j in {0,...,5} \draw (\i,\j) node[shape=circle,draw,fill=white] {};
\draw (5,2) node[shape=circle,draw,fill=black] {};
\draw (6,2) node[shape=circle,draw,fill=black] {};
\foreach \i in {0,...,4} \foreach \j in {0,1,2} \draw (5+\i,3+\j) node[shape=circle,draw,fill=black] {};

\draw (5,6) node {$\Gamma_\fracid{I}+\Gamma_\fracid{J}$};
\end{tikzpicture}
\end{center}
\caption{The value semigroup (ideals) in Example~\ref{087}.}
\label{089}
\end{figure}


\begin{example}
\label{087}
Consider the admissible ring (see Proposition~\ref{076})
\[
R = \CC \left[\left[ \left( -t_1^4,t_2 \right), \left( -t_1^3,0 \right), \left( 0,t_2 \right), \left( t_1^5,0 \right) \right]\right] \subset \CC \left[\left[ t_1 \right]\right] \times \CC \left[\left[ t_2 \right]\right] = \ol R,
\] 
and the $R$-submodules of $Q_R$
\begin{align*}
\fracid{I} &= \left\langle \left( t_1^3, t_2 \right), \left( t_1^2, 0 \right) \right\rangle_R, \\
\fracid{J} &= \left\langle \left( t_1^3, t_2 \right), \left( t_1^4, 0 \right), \left( t_1^5, 0 \right) \right\rangle_R.
\end{align*}
Then $\fracid{I}, \fracid{J} \in \RFI_R$ (see Remark~\ref{102}.\ref{102a}). Moreover, Figure~\ref{089} shows that $\Gamma_\fracid{I} + \Gamma_\fracid{J}$ does not satisfy property~\ref{E2}. Thus, $\Gamma_\fracid{I} + \Gamma_\fracid{J} \subsetneq \Gamma_{\fracid{I} \fracid{J}}$ by Remark~\ref{022}.\ref{022b} and Proposition~\ref{091}.
\end{example}


\begin{figure}
\begin{center}
\begin{tikzpicture}[scale=0.4,inner sep=1.5]
\draw[->] (0,0) -- (0,17);
\draw[->] (0,0) -- (17,0);

\foreach \i in {0,...,16} \foreach \j in {0,...,16} \draw (\i,\j) node[shape=circle,draw,fill=white] {};

\draw (0,0) node[shape=circle,draw,fill=black] {};
\draw (6,6) node[shape=circle,draw,fill=black] {};
\draw (6,7) node[shape=circle,draw,fill=black] {};
\draw (7,6) node[shape=circle,draw,fill=black] {};
\foreach \i in {9,...,16} \draw (\i,9) node[shape=circle,draw,fill=black] {};
\foreach \i in {9,10,11} \draw (\i,10) node[shape=circle,draw,fill=black] {};
\draw (9,11) node[shape=circle,draw,fill=black] {};
\foreach \i in {12,...,16} \draw (10,\i) node[shape=circle,draw,fill=black] {};
\foreach \i in {12,...,16} \foreach \j in {12,...,16} \draw (\i,\j) node[shape=circle,draw,fill=black] {};

\draw (8.5,17.5) node {$\Gamma_R$};
\end{tikzpicture}\quad
\begin{tikzpicture}[scale=0.4,inner sep=1.5]
\draw (1,0) node[shape=circle,draw=white,fill=white] {};

\draw[->] (0,0) -- (0,17);
\draw[->] (0,0) -- (17,0);

\foreach \i in {0,...,16} \foreach \j in {0,...,16} \draw (\i,\j) node[shape=circle,draw,fill=white] {};

\draw (3,4) node[shape=circle,draw,fill=black] {};
\foreach \i in {3,4,5} \draw (\i,3) node[shape=circle,draw,fill=black] {};
\foreach \i in {6,...,16} \foreach \j in {6,...,16} \draw (\i,\j) node[shape=circle,draw,fill=black] {};

\draw (8.5,17.5) node {$\Gamma_{\fracid{m}_R} - \Gamma_{\fracid{m}_R}$};
\end{tikzpicture}
\end{center}
\caption{The value semigroup (ideals) in Example~\ref{088}, see~\cite[Example~3.3]{j.pure.app.alg.147.215}.}
\label{092}
\end{figure}


\begin{example}
\label{088}
Barucci, D'Anna and Fr\"oberg showed in \cite[Example~3.3]{j.pure.app.alg.147.215} that for the local admissible ring (see Figure~\ref{092}, Proposition~\ref{076} and Remark~\ref{107})
\[
R = \mathbb{C} \ssbr{x_1,\ldots, x_{11}}
\]
with $x_1 = \br{t_1^7,t_2^6}$, $x_2 = \br{t_1^6,t_2^7}$, $x_3 = \br{t_1^9,t_2^{11}}$, $x_4 = \br{t_1^{10},t_2^{10}}$, $x_5 = \br{t_1^{11},t_2^9}$, $x_6 = \br{t_1^{11},t_2^{10}}$, $x_7 = \br{t_1^{12},t_2^{12}}$, $x_8 = \br{t_1^{13},-t_2^{13}}$, $x_9 = \br{t_1^{20},t_2^{12}}$, $x_{10} = \br{t_1^{16},t_2^{20}}$, $x_{11} = \br{t_1^{12},t_2^{20}}$ with maximal ideal $\fracid{m}_R$ the difference $\Gamma_{\fracid{m}_R} - \Gamma_{\fracid{m}_R}$ does not satisfy property~\ref{E2}, see Figure~\ref{092}. Thus, $\Gamma_{\fracid{m}_R} - \Gamma_{\fracid{m}_R} \subsetneq \Gamma_{\fracid{m}_R : \fracid{m}_R}$ by Remark~\ref{022}.\ref{022c} and Proposition~\ref{091}.
\end{example}


\begin{remark}
\label{003}
Let $S$ be a good semigroup, and let $\alpha \in \ZZ^I$.
\begin{statements}
\item\label{003a}
Let $E$, $F$, and $G$ be semigroup ideals of $S$. Then
\[
\br{E - F} - G = E - \br{F + G} = \br{E - G} - F.
\]
\item\label{003b}
For any two semigroup ideals $E$ and $F$ of $S$ we have
\[
\br{\alpha + E} - F = \alpha + \br{E - F} = E - \br{- \alpha + F}.
\]
\item\label{003c}
Let $E$, $E'$, $F$, and $F'$ be semigroup ideals of $S$. If $E \subset E'$ and $F \subset F'$, then
\[
E - F' \subset E - F \subset E' - F.
\]
\item\label{003d}
For any $E \in \GSI_S$ we have $E - S = E$.
\item\label{003e}
The map $\GSI_S \to \GSI_S$, $E \mapsto \alpha + E$ is a bijection.
\end{statements}
\end{remark}


Let $S$ be a good semigroup, and let $E \in \GSI_S$. Since $E$ satisfies property~\ref{E1}, and since it is bounded from below, there is a unique minimal element of $E$ (see \cite[Lemma~4.12]{korell2018}). Moreover, the set $E - \ol S$ is a good semigroup ideal of $S$ (see \cite[Lemma~4.1.4]{j.commut.algebra.11.81}). This allows for the following definition.


\begin{definition}
\label{052}
Let $S$ be a good semigroup, and let $E \in \GSI_S$.
\begin{deflist}
\item\label{052a}
We denote the \emph{minimal element} of $E$ by $\mu_E$.
\item\label{052b}
The \emph{conductor ideal} of $E$ is $C_E = E - \ol S$.
\item\label{052c}
The \emph{conductor} of $E$ is $\gamma_E = \mu_{C_E} = \inf \cbr{\alpha \in \ZZ^I \mid \alpha + \ol S \subset E}$.
\item\label{052d}
We write $\tau_E = \gamma_E - \one$.
\end{deflist}
\end{definition}


\begin{remark}
\label{053}
Let $S$ be a good semigroup, and let $E \in \GSI_S$. Then
\begin{statements}
\item\label{053a}
$E \subset \mu_E + \ol S$, and
\item\label{053b}
$C_E = \gamma_E + \ol S \subset E$ (see \cite[Remark~4.27]{korell2018}).
\end{statements}
\end{remark}


\begin{lemma}
\label{004}
Let $S$ be a good semigroup, and let $E$ and $F$ be good semigroup ideals of $S$. Then $\gamma_{E - F} = \gamma_E - \mu_F$.
\end{lemma}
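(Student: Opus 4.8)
The plan is to deduce the identity from the two inequalities $\gamma_{E-F}\leq\gamma_E-\mu_F$ and $\gamma_E-\mu_F\leq\gamma_{E-F}$, working directly from the characterization of the conductor: for a semigroup ideal $G$ of $S$ satisfying property~\ref{E1}, the set $G-\ol S$ is again a semigroup ideal satisfying~\ref{E1} (if $\alpha+\ol S\subset G$ and $\beta+\ol S\subset G$ then, after suitably enlarging $\delta,\delta'\in\ol S$, $\inf(\alpha,\beta)+\varepsilon=\inf(\alpha+\delta,\beta+\delta')\in G$ for $\varepsilon\in\ol S$), so it has a minimal element, which is $\gamma_G$; equivalently $\gamma_G+\ol S\subset G$ (Remark~\ref{053}.\ref{053b}) and $\alpha+\ol S\subset G$ forces $\alpha\in G-\ol S$, hence $\alpha\geq\gamma_G$. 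A preliminary observation makes $\gamma_{E-F}$ meaningful: by Remark~\ref{055}, $E-F$ is a semigroup ideal, and it satisfies~\ref{E1} because $\inf(\alpha,\beta)+f=\inf(\alpha+f,\beta+f)$ for every $f\in F$, so $\inf(\alpha,\beta)+F\subset E$ whenever $\alpha+F\subset E$ and $\beta+F\subset E$; thus $(E-F)-\ol S$ has a minimal element $\gamma_{E-F}$.

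For $\gamma_{E-F}\leq\gamma_E-\mu_F$ it suffices to show $(\gamma_E-\mu_F)+\ol S\subset E-F$. Fix $\delta\in\ol S$ and $f\in F$. Since $f\geq\mu_F$ by Remark~\ref{053}.\ref{053a}, we have $\delta+f-\mu_F\in\ol S$, so
\[
(\gamma_E-\mu_F+\delta)+f=\gamma_E+(\delta+f-\mu_F)\in\gamma_E+\ol S\subset E
\]
by Remark~\ref{053}.\ref{053b}. As $f\in F$ was arbitrary, $(\gamma_E-\mu_F+\delta)+F\subset E$, i.e.\ $\gamma_E-\mu_F+\delta\in E-F$; letting $\delta$ range over $\ol S$ gives the asserted inclusion, hence $\gamma_{E-F}\leq\gamma_E-\mu_F$. (Abstractly, this is $C_E-(\mu_F+\ol S)\subset C_E-F\subset E-F$ via Remark~\ref{003}.\ref{003c} together with $C_E-(\mu_F+\ol S)=\gamma_E-\mu_F+\ol S$ from Remark~\ref{003}.\ref{003b}, but the elementwise version is self-contained.)

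For the reverse inequality, start from $\gamma_{E-F}+\ol S\subset E-F$ (Remark~\ref{053}.\ref{053b} applied to $E-F$). For any $\delta\in\ol S$ this says $(\gamma_{E-F}+\delta)+F\subset E$, and taking $\mu_F\in F$ gives $(\gamma_{E-F}+\mu_F)+\delta\in E$. Since $\delta\in\ol S$ was arbitrary, $(\gamma_{E-F}+\mu_F)+\ol S\subset E$, so $\gamma_{E-F}+\mu_F\in E-\ol S=C_E$ and therefore $\gamma_{E-F}+\mu_F\geq\gamma_E$, i.e.\ $\gamma_E-\mu_F\leq\gamma_{E-F}$. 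Combining the two inequalities yields $\gamma_{E-F}=\gamma_E-\mu_F$.

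The only genuinely subtle point is the preliminary step that $E-F$ (hence $(E-F)-\ol S$) satisfies~\ref{E1}, which is precisely what makes $\gamma_{E-F}$ well defined even though $E-F$ need not lie in $\GSI_S$ (Example~\ref{088}); this is handled by the identity $\inf(\alpha,\beta)+f=\inf(\alpha+f,\beta+f)$. After that, both inequalities are routine manipulations of the inclusions $F\subset\mu_F+\ol S$ and $\gamma_E+\ol S\subset E$.
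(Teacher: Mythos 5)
Your proof is correct. The paper offers no argument of its own here (it only cites Lemma~4.1.11 of \cite{j.commut.algebra.11.81}), so there is nothing in-paper to compare against; your two-inequality verification directly from the definitions of $C_E$, $\gamma_E$, and $E-F$ is the natural route, and you correctly isolate and resolve the one genuine subtlety, namely that $\gamma_{E-F}$ is well defined even though $E-F$ need not lie in $\GSI_S$ (it is a semigroup ideal satisfying \ref{E1} and bounded below, so $C_{E-F}$ has a unique minimal element).
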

\begin{proof}
See \cite[Lemma~4.1.11]{j.commut.algebra.11.81}
\end{proof}


\begin{lemma}
\label{064}
Let $S$ be a good semigroup, and let $E \in \GSI_S$. For any $\alpha \in \ZZ^I$, we have $\alpha \in E$ if and only if $\inf \br{\alpha, \gamma_E} \in E$.
\end{lemma}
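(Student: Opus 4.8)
The forward implication is immediate: by Remark~\ref{053}.\ref{053b} we have $\gamma_E \in C_E \subset E$, and $E$ satisfies property~\ref{E1}, so $\alpha \in E$ forces $\inf \br{\alpha, \gamma_E} \in E$. The content is the converse, and the plan is as follows. Write $\gamma = \gamma_E$ and assume $\inf \br{\alpha, \gamma} \in E$. Throughout I would use that $\gamma + \ol S \subset E$ (again Remark~\ref{053}.\ref{053b}), i.e.\ that \emph{every} $\rho \in \ZZ^I$ with $\rho \geq \gamma$ lies in $E$.

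The first step is a single-coordinate lifting lemma: if $\sigma \in E$ and $i_0 \in I$ satisfy $\sigma_{i_0} \geq \gamma_{i_0}$, then there is $\sigma' \in E$ with $\sigma'_i = \sigma_i$ for all $i \neq i_0$ and $\sigma'_{i_0} > \sigma_{i_0}$. To produce it, set $\rho_{i_0} = \sigma_{i_0}$ and $\rho_i = \max \cbr{\gamma_i, \sigma_i} + 1$ for $i \neq i_0$; then $\rho \geq \gamma$, so $\rho \in E$, and $\rho$ agrees with $\sigma$ at $i_0$. Applying property~\ref{E2} to the pair $\sigma, \rho$ at the coordinate $i_0$ yields $\varepsilon \in E$ with $\varepsilon_{i_0} > \sigma_{i_0}$, and since $\sigma_i < \rho_i$ for every $i \neq i_0$, the ``equality'' clause of \ref{E2} forces $\varepsilon_i = \min \cbr{\sigma_i, \rho_i} = \sigma_i$ there; so $\sigma' := \varepsilon$ works. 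Iterating this step finitely many times, from any such $\sigma$ one obtains an element of $E$ that still agrees with $\sigma$ off $i_0$ and whose $i_0$-coordinate is as large as one wishes.

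The second step proves $\alpha \in E$ by induction on $k = \vbr{\cbr{i \in I \mid \alpha_i > \gamma_i}}$. If $k = 0$ then $\alpha \leq \gamma$ componentwise, so $\alpha = \inf \br{\alpha, \gamma} \in E$. If $k > 0$, choose $i_0$ with $\alpha_{i_0} > \gamma_{i_0}$ and let $\tilde \alpha$ agree with $\alpha$ except for $\tilde \alpha_{i_0} = \gamma_{i_0}$. Then $\inf \br{\tilde \alpha, \gamma} = \inf \br{\alpha, \gamma} \in E$ and the corresponding index set for $\tilde\alpha$ has size $k - 1$, so $\tilde \alpha \in E$ by the induction hypothesis. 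Since $\tilde \alpha_{i_0} = \gamma_{i_0}$, the lifting step (applied with $\sigma = \tilde\alpha$) produces $\sigma \in E$ agreeing with $\alpha$ off $i_0$ and with $\sigma_{i_0} \geq \alpha_{i_0}$; hence $\inf \br{\sigma, \alpha} = \alpha$, and $\alpha \in E$ by property~\ref{E1}.

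The main obstacle is this converse direction, and inside it the subtlety that a naive application of property~\ref{E2} to the pair $\br{\tilde\alpha, \gamma}$ would pull every coordinate in which $\alpha$ exceeds $\gamma$ back down to $\gamma$. This is the reason for reducing those excess coordinates to $\gamma$ one at a time (the induction) and for lifting a single coordinate back up against the auxiliary element $\rho \geq \gamma$ rather than against $\gamma$ itself; verifying that the bookkeeping of which coordinates stay fixed goes through at each use of \ref{E2} is the only delicate point, and it is handled by the choice $\rho_i = \max \cbr{\gamma_i, \sigma_i} + 1$.
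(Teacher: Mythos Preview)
The paper does not actually prove this lemma; it simply cites \cite[Lemma~4.1.9]{j.commut.algebra.11.81}. Your argument is therefore a genuine self-contained proof, and the overall strategy---\ref{E1} for the forward direction, and for the converse an induction on $\vbr{\cbr{i \in I \mid \alpha_i > \br{\gamma_E}_i}}$ together with a single-coordinate lifting via \ref{E2} against an auxiliary element of $C_E$---is sound.

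There is, however, a slip in the final line of the inductive step. You write ``$\inf \br{\sigma,\alpha} = \alpha$, and $\alpha \in E$ by property~\ref{E1}''. But \ref{E1} only yields $\inf \br{\sigma, \alpha} \in E$ when \emph{both} $\sigma$ and $\alpha$ are already known to lie in $E$; here $\alpha$ is precisely the element whose membership you are trying to establish, so this appeal to \ref{E1} is circular. Knowing only $\sigma \in E$ and $\sigma \geq \alpha$ does not force $\alpha \in E$. The fix is cheap and in the same spirit as your construction of $\rho$: once you have $\sigma \in E$ with $\sigma_i = \alpha_i$ for $i \neq i_0$ and $\sigma_{i_0} \geq \alpha_{i_0}$, choose $\beta \in \ZZ^I$ with $\beta_{i_0} = \alpha_{i_0}$ and $\beta_i > \max \cbr{\alpha_i, \gamma_i}$ for $i \neq i_0$. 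Since $\alpha_{i_0} > \gamma_{i_0}$ by the choice of $i_0$, we have $\beta \geq \gamma$, hence $\beta \in C_E \subset E$ by Remark~\ref{053}.\ref{053b}. Now both $\sigma$ and $\beta$ lie in $E$, and $\inf \br{\sigma, \beta} = \alpha$, so \ref{E1} legitimately gives $\alpha \in E$.
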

\begin{proof}
See \cite[Lemma~4.1.9]{j.commut.algebra.11.81}.
\end{proof}


\begin{lemma}
\label{081}
Let $R$ and $R'$ be admissible rings such that $\fracid{C}_{R'} \subset R \subset R' \subset Q_R$. Then
\begin{statements}
\item\label{081a}
$R' \in \RFI_R$, and
\item\label{081b}
if $\Gamma_R = \Gamma_{R'}$, then $R = R'$.
\end{statements}
\end{lemma}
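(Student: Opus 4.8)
The plan is to deduce from the hypothesis $\fracid C_{R'} \subset R$ that $R' \in \RFI_R$ (this is (a)), hence that $R'$ is integral over $R$ and in particular $\nu_R = \nu_{R'}$, and then to prove (b) by a descending induction on values after reducing to the local case. For (a): since $R \subset R' \subset Q_R$, the ring $R'$ is an $R$-submodule of $Q_R$, and it is regular because $1 \in \br{R'}^\reg$, so it remains to produce $x \in R^\reg$ with $x R' \subset R$. As $R'$ is admissible, $\ol{R'}$ is a finite $R'$-module, so $\ol{R'}$ and $\fracid C_{R'} = R' : \ol{R'}$ lie in $\RFI_{R'}$ by Remark~\ref{102}.\ref{102e}. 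Pick $x \in \fracid C_{R'}^\reg$; then $x \in \fracid C_{R'} \subset R$ is a non-zerodivisor of $Q_R$, hence $x \in R^\reg$, and since $\fracid C_{R'}$ is an ideal of $R'$ we get $x R' \subset \fracid C_{R'} \subset R$. (This is the only place the hypothesis $\fracid C_{R'} \subset R$ is used.)

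For (b), first reduce to the local case. By (a) and Remark~\ref{102}.\ref{102a} ($R$ being Cohen--Macaulay, hence Noetherian), $R'$ is a finitely generated $R$-module; moreover $R' \subset R' : R'$, which is integral over $R$ by Proposition~\ref{086}, so $R'$ is integral over $R$, and Proposition~\ref{029}.\ref{029a} gives $\nu_R = \nu_{R'}$ — whence $\Gamma_{R'} = \nu_R\br{\br{R'}^\reg}$ and $\ol R = \bigcap_{V \in \VR_R} V = \ol{R'}$. Since $R'/R$ is a finitely generated $R$-module, it is enough to show $R_\fracid m = R'_\fracid m$ for every $\fracid m \in \Max\br{R}$. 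For such $\fracid m$, the ring $R_\fracid m$ is again admissible, $R'_\fracid m$ is integral over it and hence admissible with $\nu_{R'_\fracid m} = \nu_{R_\fracid m}$ by Proposition~\ref{029}.\ref{029a}, and $\Gamma_{R_\fracid m} = \br{\Gamma_R}_{\Gamma_\fracid m} = \br{\Gamma_{R'}}_{\Gamma_\fracid m} = \Gamma_{R'_\fracid m}$ by Theorem~\ref{105}. Thus we may assume $R$, and therefore $R'$, is local; set $\KK = R/\fracid m_R$.

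Now I claim $\br{R'}^\reg \subset R$; this suffices, for then every non-unit $y$ of the local ring $R'$ satisfies $1 + y \in \br{R'}^\ast \subset R$, whence $y \in R$ and $R' \subset R$. The conductor $R : \ol R$ is an ideal of $\ol R = \prod_{i \in I} V_i$, and it is regular (as $\ol R$ is a finite $R$-module), so $R : \ol R = \cbr{q \in Q_R \mid \nu\br{q} \geq \gamma}$ for some $\gamma \in \NN^I$; note $R : \ol R \subset R$. Let $y \in \br{R'}^\reg$. If $\nu\br{y} \not\geq \gamma$, choose $j \in I$ with $\nu_j\br{y} < \gamma_j$; since $\nu\br{y} \in \Gamma_{R'} = \Gamma_R$, there is $x_0 \in R^\reg$ with $\nu\br{x_0} = \nu\br{y}$. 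By Proposition~\ref{075}.\ref{075c}, the $j$-th components of $y$ and $x_0$ are $a t_j^{\nu_j\br{y}}$ and $b t_j^{\nu_j\br{y}}$ for units $a, b$ of $V_j$; since by residual rationality $\KK \cong V_j / \fracid m_{V_j}$ and $R^\ast \onto \KK^\ast$ is surjective, we may choose $u \in R^\ast$ whose image in $V_j / \fracid m_{V_j}$ coincides with that of $a b^{-1}$, and then $\nu_j\br{y - u x_0} > \nu_j\br{y}$ while $\nu_i\br{y - u x_0} \geq \nu_i\br{y}$ for all $i$. As $u x_0 \in R \subset R'$, $y - u x_0 \in R'$; if $y - u x_0 = 0$ then $y \in R$, and otherwise $y - u x_0$ is again regular. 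Replacing $y$ by $y - u x_0$ strictly decreases the nonnegative integer $\sum_{i \in I} \max\cbr{\gamma_i - \nu_i\br{y},\, 0}$, so after finitely many steps $y$ is replaced by $y - x$ with $x \in R$ and either $y - x = 0$ or $\nu\br{y - x} \geq \gamma$; in the latter case $y - x \in R : \ol R \subset R$. Either way $y \in R$, as claimed.

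The main obstacle is the leading-coefficient matching in the previous paragraph — that for a regular element of value $\alpha \in \Gamma_R$ and any prescribed branch $j$ one can cancel its $j$-th leading term by multiplying a value-$\alpha$ element of $R$ by a unit of $R$; this is exactly where residual rationality and $R^\ast \onto \KK^\ast$ enter, and it is what makes the descending induction terminate against the finite length of $\ol R / \br{R : \ol R}$. Part (a) and the reduction are otherwise routine, modulo the standard facts (not in the excerpt but in the cited literature) that normalization is module-finite for admissible rings and that localizing an admissible ring at a maximal ideal again yields an admissible ring. If one prefers, the descending-induction paragraph may be replaced by an appeal to D'Anna's colength formula, according to which $\ell_R\br{\fracid I / \br{R : \ol R}}$ depends only on $\Gamma_\fracid I$ for $R : \ol R \subset \fracid I \subset \ol R$, so that $\Gamma_R = \Gamma_{R'}$ forces $\ell_R\br{R'/R} = 0$.
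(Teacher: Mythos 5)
Part \ref{081a} of your proof is exactly the paper's argument, and your reduction of \ref{081b} to the local case is sound. The problem is in the descending induction. You assert that if $y - u x_0 \ne 0$ then $y - u x_0$ is again regular, and this is false as soon as $\vbr{\VR_R} \geq 2$: taking $y = \br{t_1, t_2}$ and $u x_0 = \br{t_1, 2 t_2}$ gives $y - u x_0 = \br{0, -t_2}$, a non-zero zero divisor. You cannot rule this out, because you are forced to match the leading coefficient at the chosen branch $j$, and residual rationality identifies all residue fields with $\KK$, so the same unit $u$ may accidentally cancel the leading terms at other branches as well. Once a zero divisor appears the induction dies: its value vector has an $\infty$-component, hence lies outside $\Gamma_{R'} = \Gamma_R \subset \NN^{\VR_R}$, and the hypothesis $\Gamma_R = \Gamma_{R'}$ (which only concerns \emph{regular} elements) no longer supplies an element of $R$ with matching value to subtract. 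This is precisely the multi-branch difficulty that forces the literature to prove \enquote{$\fracid I \subset \fracid J$ and $\Gamma_{\fracid I} = \Gamma_{\fracid J}$ imply $\fracid I = \fracid J$} via the colength/distance formula rather than by naive elimination of leading terms; your induction is the correct proof only in the numerical ($\vbr{I}=1$) case. The paper itself disposes of \ref{081b} in one line by citing exactly that result (Corollary~4.2.8 of the canonical-ideals paper), which is the route you offer as a fallback in your last sentence -- that fallback is fine, but it is the whole proof, not an optional replacement for a working argument.

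A secondary point: \enquote{we may assume $R$, and therefore $R'$, is local} is not automatic -- the paper stresses that integral extensions of local rings need not be local -- and your final step ($y$ a non-unit $\Rightarrow 1 + y \in \br{R'}^\ast$) genuinely uses locality of $R'$, not of $R$. Here it happens to be true, but only because $\Gamma_{R'} = \Gamma_R$ is a local good semigroup and Theorem~\ref{105}.\ref{105c} then forces $R'$ to be local; that justification needs to be said. Unlike the zero-divisor issue, this one is repairable with a sentence.
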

\begin{proof}\
\begin{statements}
\item
Let $x \in \br{\fracid{C}_R'}^\reg \subset R^\reg$. Then $x R' \subset \fracid{C}_R \subset R$. Since $\emptyset \ne R^\reg \subset \br{R'}^\reg$, this yields $R' \subset \RFI_R$.
\item
Since $R' \in \RFI_R$ by \ref{081a}, and since $R \in \RFI_R$, this follows from \cite[Corollary~4.2.8]{j.commut.algebra.11.81}.
\qedhere
\end{statements}
\end{proof}


\begin{proposition}
\label{082}
Let $R$ be an admissible ring, and let $\fracid{I} \in \RFI_R$. Then $\fracid{C}_\fracid{I} = \br{Q_R}^{\gamma_{\Gamma_\fracid{I}}}$.
\end{proposition}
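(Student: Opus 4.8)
The plan is to prove the two inclusions separately, after reducing to a complete local ring and exploiting that $\fracid{C}_\fracid{I}$ is an $\ol R$-module. To begin, $\fracid{C}_\fracid{I} = \fracid{I} : \ol R$ is a regular fractional ideal of $\ol R$: choosing $c \in Q_R^\reg$ with $c\ol R \subset R$ (possible since $\ol R$ is a finite $R$-module) and $b \in \fracid{I}^\reg$ gives $cb\,\ol R = b\br{c\ol R} \subset bR \subset \fracid{I}$, so $cb \in \fracid{C}_\fracid{I}^\reg$, and that $\fracid{C}_\fracid{I}$ is an $\ol R$-module and a fractional ideal of $\ol R$ is immediate from Remark~\ref{102}. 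Since completion is faithfully flat and commutes with $\Hom_R\br{\ol R,-}$, and since $\wh R$ is again admissible with $\wh{\ol R} = \ol{\wh R}$ (as $\ol R$ is module-finite over $R$ and $\wh R$ is reduced), $\Gamma_{\wh R} = \Gamma_R$, $\Gamma_{\wh{\fracid{I}}} = \Gamma_\fracid{I}$, $\wh{\br{Q_R}^\alpha} = \br{Q_{\wh R}}^\alpha$ (see \cite{korell2018}), and all of this is compatible with the product decomposition $\wh R = \prod_{\fracid{m} \in \Max\br{R}} \wh{R_\fracid{m}}$ into complete local admissible rings (Theorem~\ref{105}), it suffices to treat the case where $R$ is a complete local admissible ring. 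Then splitting of normalization gives $\ol R = \prod_{i \in I} V_i$ with $V_i$ complete discrete valuation rings, say with uniformizing parameters $t_i$; residual rationality gives that every $V_i$ has residue field $\KK = R/\fracid{m}_R$; and we write $e_i \in \ol R$ for the branch idempotents.

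Decomposing $\fracid{C}_\fracid{I} = \prod_{i \in I} e_i\fracid{C}_\fracid{I}$ along the branches, each $e_i\fracid{C}_\fracid{I}$ is a nonzero fractional ideal of the discrete valuation ring $V_i$, hence of the form $t_i^{d_i}V_i$ with $d_i \in \ZZ$. Thus $\fracid{C}_\fracid{I} = \br{Q_R}^d$ for $d = \br{d_i}_{i \in I} \in \ZZ^I$, so that $\Gamma_{\fracid{C}_\fracid{I}} = d + \NN^I$. As $1 \in \ol R$ forces $\fracid{C}_\fracid{I} \subset \fracid{I}$, we obtain $d + \NN^I \subset \Gamma_\fracid{I}$, whence $d \geq \gamma_{\Gamma_\fracid{I}}$ by Definition~\ref{052}.\ref{052c}. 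This gives the inclusion $\fracid{C}_\fracid{I} = \br{Q_R}^d \subset \br{Q_R}^{\gamma_{\Gamma_\fracid{I}}}$.

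For the opposite inclusion, note that since $\br{Q_R}^{\gamma_{\Gamma_\fracid{I}}}$ is an $\ol R$-module it is enough to prove $\br{Q_R}^{\gamma_{\Gamma_\fracid{I}}} \subset \fracid{I}$; then $\br{Q_R}^{\gamma_{\Gamma_\fracid{I}}} \subset \fracid{I} : \ol R = \fracid{C}_\fracid{I}$ and equality follows. Writing $\gamma = \gamma_{\Gamma_\fracid{I}}$, recall that $\gamma + \NN^I \subset \Gamma_\fracid{I}$ (Remark~\ref{053}.\ref{053b}). I would prove $\br{Q_R}^\gamma \subset \fracid{I}$ by a branch-by-branch successive approximation: given $x \in \br{Q_R}^\gamma$ and an integer $N$, one constructs $y \in \fracid{I}$ with $\nu_R\br{x - y} \geq N\cdot\one$ by treating the branches in turn and, on each branch, raising the order of the current remainder one step at a time; a step consists in subtracting an element $\rho y'$, where $y' \in \fracid{I}$ is taken with a prescribed value that is $\geq \gamma$ on every branch (hence lies in $\gamma + \NN^I \subset \Gamma_\fracid{I}$, so such $y'$ exists) and in particular large on the branches already treated, and $\rho \in R^\ast$ is chosen --- using that $R$ is local, so every element of $\KK^\ast$ is the residue of a unit of $R$ --- so that the leading coefficient of $\rho y'$ on the current branch cancels that of the remainder. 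Letting $N \to \infty$, and using that $\br{Q_R}^{N\cdot\one} \subset \fracid{m}_R^{N'}$ with $N' \to \infty$ (because $\fracid{C}_R = \br{Q_R}^c$ for some $c \in \ZZ^I$, so $\fracid{m}_R^{N'} \supset \fracid{C}_R^{N'} = \br{Q_R}^{N'c}$) while $\fracid{I}$ is $\fracid{m}_R$-adically closed, being finitely generated over the complete ring $R$, one concludes $x \in \fracid{I}$.

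The main obstacle is precisely this last step --- carrying out the approximation on all branches simultaneously. The two facts that make it work are that one may freeze the already-treated branches by only ever subtracting elements whose value on those branches is large (again available because $\gamma + \NN^I \subset \Gamma_\fracid{I}$), and that on any fixed branch the set of leading coefficients of elements of $\fracid{I}$ of a fixed value $\lambda \geq \gamma$ is nonempty (since $\lambda \in \gamma + \NN^I \subset \Gamma_\fracid{I}$) and invariant under multiplication by $\KK^\ast$ (since $\fracid{I}$ is an $R$-module and every element of $\KK^\ast$ is the residue of a unit of $R$), hence is all of $\KK^\ast$. For $\fracid{I} = R$ this recovers the classical identification of the conductor $\fracid{C}_R = R : \ol R$ of a curve with $\br{Q_R}^{\gamma_{\Gamma_R}}$; alternatively, the statement may be deduced from \cite{j.commut.algebra.11.81}.
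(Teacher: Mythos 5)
Your argument is correct, and it is worth noting that the paper itself offers no proof of Proposition~\ref{082} beyond a citation to \cite[Proposition~4.56]{korell2018}, so you have supplied an actual argument where the paper delegates. Your two halves are both sound: since $\fracid{C}_\fracid{I} = \fracid{I} : \ol R$ is a regular fractional ideal of $\ol R = \prod_i V_i$, it is necessarily of the form $\br{Q_R}^d$, and $\fracid{C}_\fracid{I} \subset \fracid{I}$ forces $d + \NN^I \subset \Gamma_\fracid{I}$, hence $d \geq \gamma_{\Gamma_\fracid{I}}$ by Remark~\ref{053}.\ref{053b}; and the reverse inclusion reduces to $\br{Q_R}^{\gamma_{\Gamma_\fracid{I}}} \subset \fracid{I}$ because the left-hand side is an $\ol R$-module. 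Your successive-approximation proof of this last inclusion is the classical one and uses exactly the right ingredients (residual rationality to realize every leading coefficient in $\KK^\ast$ by a unit of $R$, the inclusion $\gamma_{\Gamma_\fracid{I}} + \NN^I \subset \Gamma_\fracid{I}$ to freeze already-treated branches, and completeness plus Krull intersection to pass to the limit); in effect you are reproving, in this special case, the faithfulness statement that the paper quotes elsewhere. Indeed, the whole second half can be short-circuited by machinery already cited in the paper: by Remark~\ref{025}.\ref{025a} the regular fractional ideals $\fracid{I}^{\gamma_{\Gamma_\fracid{I}}} \subset \br{Q_R}^{\gamma_{\Gamma_\fracid{I}}}$ both have value semigroup ideal $\gamma_{\Gamma_\fracid{I}} + \NN^I$, so they coincide by \cite[Corollary~4.2.8]{j.commut.algebra.11.81} (the result underlying Lemma~\ref{081}.\ref{081b}), giving $\br{Q_R}^{\gamma_{\Gamma_\fracid{I}}} \subset \fracid{I}$ and hence $\br{Q_R}^{\gamma_{\Gamma_\fracid{I}}} \subset \fracid{I} : \ol R$ directly, with no completion or approximation needed. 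Your version is longer but self-contained; the alternative buys brevity at the cost of leaning on the cited duality/faithfulness package. One small point of care in your write-up: when you invoke $\fracid{C}_R = \br{Q_R}^c$ to compare the topologies, make sure this is read as the weak form established in your first paragraph (existence of some $c$), not as the proposition being proved, so that no circularity enters.
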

\begin{proof}
See \cite[Proposition~4.56]{korell2018}.
\end{proof}


\subsection{Stable Ideals}


Stable semigroup ideals were introduced by Barucci, D'Anna, and Fr\"oberg \cite{j.pure.app.alg.147.215}. The definition reflects the characterization of stable fractional ideals in Proposition~\ref{104}.


\begin{definition}
\label{010}
Let $S$ be a good semigroup. A good semigroup ideal $E$ of $S$ is called \emph{stable} if there is an $\alpha \in E$ such that $\alpha + \br{E - E} = E$. Note that then $\alpha = \mu_E$.
\end{definition}


Stability of a fractional ideal $\fracid I$ implies stability of the value semigroup ideal $\Gamma_{\fracid I}$. While the converse is in general not true, we have the following characterization.


\begin{proposition}
\label{011}
Let $R$ be an admissible ring, and let $\fracid{I} \in \RFI_R$. Then $\fracid{I}$ is stable if and only if $\Gamma_{\fracid{I} : \fracid{I}} = \Gamma_\fracid{I} - \Gamma_\fracid{I}$ and $\Gamma_\fracid{I}$ is stable.
\end{proposition}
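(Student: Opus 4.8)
The plan is to prove the equivalence by transporting statements about $\fracid{I}$ to statements about $\Gamma_\fracid{I}$ through a principal generator whose value is the minimal element of $\Gamma_\fracid{I}$. Throughout I would write $A := \fracid{I} : \fracid{I}$; by Proposition~\ref{086} and Proposition~\ref{029}.\ref{029a} this is an admissible ring with $\VR_A = \VR_R$ and $\nu_A = \nu_R$, so $\Gamma_A = \Gamma_{\fracid{I}:\fracid{I}}$, and for every $x \in Q_R^\reg$ and every $\fracid{I}' \in \RFI_R$ one has $\Gamma_{x\fracid{I}'} = \nu_R(x) + \Gamma_{\fracid{I}'}$.

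For the forward implication I would start from a generator $x \in \fracid{I}$ with $xA = \fracid{I}$ furnished by Proposition~\ref{104}; regularity of $\fracid{I}$ forces $x$ to be regular, so $\alpha := \nu_R(x) \in \Gamma_\fracid{I}$. Passing to values gives $\Gamma_\fracid{I} = \alpha + \Gamma_{\fracid{I}:\fracid{I}}$. Combining the always-valid inclusion $\Gamma_{\fracid{I}:\fracid{I}} \subseteq \Gamma_\fracid{I} - \Gamma_\fracid{I}$ (Remark~\ref{022}.\ref{022c}) with the elementary observation that $\beta + \Gamma_\fracid{I} \subseteq \Gamma_\fracid{I}$ implies $\alpha + \beta \in \Gamma_\fracid{I}$, hence $\beta \in -\alpha + \Gamma_\fracid{I} = \Gamma_{\fracid{I}:\fracid{I}}$, yields $\Gamma_{\fracid{I}:\fracid{I}} = \Gamma_\fracid{I} - \Gamma_\fracid{I}$. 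Then $\alpha + (\Gamma_\fracid{I} - \Gamma_\fracid{I}) = \Gamma_\fracid{I}$ with $\alpha \in \Gamma_\fracid{I}$, which is exactly stability of $\Gamma_\fracid{I}$ in the sense of Definition~\ref{010} (and forces $\alpha = \mu_{\Gamma_\fracid{I}}$).

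For the converse I would use stability of $\Gamma_\fracid{I}$ to write $\mu_{\Gamma_\fracid{I}} + (\Gamma_\fracid{I} - \Gamma_\fracid{I}) = \Gamma_\fracid{I}$, and then substitute the hypothesis $\Gamma_{\fracid{I}:\fracid{I}} = \Gamma_\fracid{I} - \Gamma_\fracid{I}$ to get $\mu_{\Gamma_\fracid{I}} + \Gamma_{\fracid{I}:\fracid{I}} = \Gamma_\fracid{I}$. Picking $x \in \fracid{I}^\reg$ with $\nu_R(x) = \mu_{\Gamma_\fracid{I}}$ (possible since $\mu_{\Gamma_\fracid{I}} \in \Gamma_\fracid{I} = \nu_R(\fracid{I}^\reg)$), the fractional ideal $xA = x(\fracid{I}:\fracid{I}) \in \RFI_R$ then satisfies $xA \subseteq \fracid{I}$ (because $x \in \fracid{I}$ and $A\fracid{I} \subseteq \fracid{I}$) and $\Gamma_{xA} = \mu_{\Gamma_\fracid{I}} + \Gamma_{\fracid{I}:\fracid{I}} = \Gamma_\fracid{I}$. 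I would conclude by the comparison result for regular fractional ideals of admissible rings behind \cite[Corollary~4.2.8]{j.commut.algebra.11.81} --- the same input used for Lemma~\ref{081}.\ref{081b} --- namely that a containment of regular fractional ideals with equal value semigroup ideals is an equality: this gives $xA = \fracid{I}$, whence $\fracid{I}$ is stable by Proposition~\ref{104}.

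The step I expect to be the main obstacle is precisely this last one in the converse. Stability of $\Gamma_\fracid{I}$ on its own only delivers $xA \subseteq \fracid{I}$; to upgrade this to an equality of ideals one genuinely needs the extra hypothesis $\Gamma_{\fracid{I}:\fracid{I}} = \Gamma_\fracid{I} - \Gamma_\fracid{I}$ (which can fail, as Example~\ref{088} illustrates) together with the length/distance machinery for good semigroup ideals. Everything else is routine manipulation of differences of semigroup ideals (Remark~\ref{003}) and the dictionary relating $R$, $A = \fracid{I}:\fracid{I}$, and their common valuations.
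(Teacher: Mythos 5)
Your proof is correct. The paper itself gives no argument here but simply cites \cite[Proposition~3.14]{j.pure.app.alg.147.215}, and your reconstruction is essentially the standard one: both directions hinge on transporting the principal-generator characterization of stability (Proposition~\ref{104}) through $\nu_R$, with the forward direction a routine computation and the converse resting, exactly as you identify, on the fact that an inclusion of regular fractional ideals with equal value semigroup ideals is an equality --- the same input (\cite[Corollary~4.2.8]{j.commut.algebra.11.81}, i.e.\ the length--distance machinery) that the paper invokes in Lemma~\ref{081}.\ref{081b}. Your use of the hypothesis $\Gamma_{\fracid{I}:\fracid{I}} = \Gamma_{\fracid I} - \Gamma_{\fracid I}$ only in the converse, and your check that the generator $x$ is automatically regular, are both sound.
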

\begin{proof}
See \cite[Proposition~3.14]{j.pure.app.alg.147.215}.
\end{proof}


\section{Canonical Ideals and Gorenstein Property}
\label{110}


As seen in the previous section, good semigroup ideals do not behave well under differences in general. However, extending results by J\"ager \cite{arch.math.29.504} and D'Anna \cite{comm.algebra.25.2939}, \emph{canonical semigroup ideals} were introduced in \cite{j.commut.algebra.11.81}.


\begin{definition}
\label{015}\
\begin{deflist}
\item\label{015a}
Let $R$ be a one-dimensional equidimensional Cohen--Macaulay ring. A regular fractional ideal $\fracid{K}$ is called a \emph{canonical ideal} of $R$ if $\fracid{K} : \br{\fracid{K} : \fracid{I}} = \fracid{I}$ for every $\fracid{I} \in \RFI_R$.
\item\label{015b}
Let $S$ be a good semigroup. A good semigroup ideal $S$ is called a \emph{canonical (semigroup) ideal} of $S$ if $K \subset E$ implies $K = E$ for all $E \in \GSI_S$ with $\gamma_E = \gamma_K$.
\end{deflist}
\end{definition}


It turns out that canonical semigroup ideals induce a duality on good semigroup ideals (see Theorem~\ref{018}) in analogy to that on fractional ideals. So if $S$ is a good semigroup, then a canonical semigroup ideal $K$ is characterized by the property $K - \br{K - E} = E$ for every good semigroup ideal $E$ of $S$. Moreover, the \emph{dual} $K - E$ of $E$ is in fact a good semigroup ideal.

Canonical fractional ideals and canonical semigroup ideals are compatible in the following sense: Canonical fractional ideals are characterized by their value semigroup ideals, and dualizing commutes with taking values.


\begin{theorem}
\label{016}
Let $R$ be an admissible ring.
\begin{statements}
\item\label{016a}
A regular fractional ideal $\fracid{K}$ of $R$ is canonical if and only if $\Gamma_\fracid{K}$ is a canonical ideal of $\Gamma_R$.
\item\label{016b}
If $\fracid{K}$ is a canonical ideal of $R$, then $\Gamma_{\fracid{K}: \fracid{I}} = \Gamma_\fracid{K} - \Gamma_\fracid{I}$ for every regular fractional ideal $\fracid{I}$ of $\fracid{K}$.
\end{statements}
\end{theorem}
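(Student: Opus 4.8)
The plan is to exploit, on both sides of the equivalence, the invariance of the notion of ``canonical'' under twisting. On the ring side, if $\fracid K$ is replaced by $u\fracid K$ with $u\in Q_R^\ast$, then $(u\fracid K):\bigl((u\fracid K):\fracid I\bigr)=\fracid K:(\fracid K:\fracid I)$ by Remark~\ref{102}.\ref{102b}, so being a canonical ideal is twist-invariant; likewise $(u\fracid K):\fracid I=u(\fracid K:\fracid I)$. On the semigroup side, $K\mapsto\beta+K$ is a bijection of $\GSI_{\Gamma_R}$ (Remark~\ref{032}... and Remark~\ref{003}.\ref{003e}) which preserves canonical semigroup ideals and commutes with differences (Remark~\ref{003}.\ref{003b}); and these two operations match under $\nu_R$ because $\nu_R$ carries $Q_R^\ast$ onto the group of differences $\ZZ^I$ of $\Gamma_R$ (Remark~\ref{032}.\ref{032d}). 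Hence one may normalise to $\mu_{\Gamma_\fracid K}=\zero$, equivalently $R\subset\fracid K\subset\ol R$, and it suffices to prove \ref{016a} and \ref{016b} in that case. Throughout one uses $\Gamma_\fracid I\in\GSI_{\Gamma_R}$ for $\fracid I\in\RFI_R$ (Proposition~\ref{091}), so that Definition~\ref{015}.\ref{015b} applies to the value semigroup ideals in question.

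For \ref{016a} I would reduce to the classical ``symmetry'' description of the canonical ideal and its combinatorial counterpart. Extending the characterisations of J\"ager \cite{arch.math.29.504} and D'Anna \cite{comm.algebra.25.2939} from reduced rings to admissible rings (this extension is in \cite{korell2018}), a normalised fractional ideal $\fracid K$ with $R\subset\fracid K\subset\ol R$ is a canonical ideal of $R$ if and only if its value semigroup ideal $\Gamma_\fracid K$ is the ``$\tau$-dual'' of $\Gamma_R$, i.e.\ $\alpha\in\Gamma_\fracid K$ exactly when there is no $\delta\in\Gamma_R$ lying above and meeting $\tau_{\Gamma_R}-\alpha$ in a coordinate. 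On the other hand, by \cite{j.commut.algebra.11.81} this $\tau$-dual is precisely the canonical semigroup ideal of the good semigroup $\Gamma_R$, which is unique up to translation. Chaining these two facts gives: $\fracid K$ is canonical $\iff$ $\Gamma_\fracid K$ equals the $\tau$-dual of $\Gamma_R$ $\iff$ $\Gamma_\fracid K$ is a canonical semigroup ideal; after un-normalising via the first paragraph this is \ref{016a}.

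For \ref{016b}, assume $\fracid K$ canonical, so by \ref{016a} the ideal $\Gamma_\fracid K$ is a canonical semigroup ideal; the inclusion $\Gamma_{\fracid K:\fracid I}\subset\Gamma_\fracid K-\Gamma_\fracid I$ is Remark~\ref{022}.\ref{022c}, and twisting $\fracid I$ we may assume $\fracid I\subset\fracid K$, hence $\Gamma_\fracid I\subset\Gamma_\fracid K$ and $\Gamma_R=\Gamma_{\fracid K:\fracid K}\subset\Gamma_{\fracid K:\fracid I}\subset\Gamma_\fracid K-\Gamma_\fracid I$ (using $\fracid K:\fracid K=R$, the standard fact $\End_R(\fracid K)=R$ for a canonical ideal, and first settling the base case $\Gamma_\fracid K-\Gamma_\fracid K=\Gamma_R$, which is part of the canonical–duality package of \cite{j.commut.algebra.11.81}). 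To upgrade the inclusion to an equality I would count lengths: for $\fracid I\subset\fracid K$ the exactness of $\Hom_R(-,\fracid K)$ on torsion-free modules together with $\operatorname{Ext}^{>0}_R(-,\fracid K)$ vanishing on maximal Cohen--Macaulay modules (this is exactly what ``canonical ideal'' provides) yields the colength identity $\ell_R\bigl((\fracid K:\fracid I)/R\bigr)=\ell_R(\fracid K/\fracid I)$; under the dictionary of \cite{j.pure.app.alg.147.215,j.commut.algebra.11.81} identifying such $R$-colengths with the combinatorial distance between the corresponding good semigroup ideals, and using that the canonical duality $E\mapsto\Gamma_\fracid K-E$ on $\GSI_{\Gamma_R}$ is distance-preserving, this becomes the equality of distances $\bigl(\Gamma_R,\Gamma_{\fracid K:\fracid I}\bigr)$ and $\bigl(\Gamma_R,\Gamma_\fracid K-\Gamma_\fracid I\bigr)$; additivity of the distance along $\Gamma_R\subset\Gamma_{\fracid K:\fracid I}\subset\Gamma_\fracid K-\Gamma_\fracid I$ then forces $\Gamma_{\fracid K:\fracid I}=\Gamma_\fracid K-\Gamma_\fracid I$. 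The genuine obstacle is precisely this reverse inclusion (equivalently the ``only if'' in \ref{016a}): in general $\GSI_{\Gamma_R}$ is not closed under differences and $\Gamma_{(-)}$ does not commute with the ideal quotient (Examples~\ref{087} and~\ref{088}), so the identity is a real consequence of the self-duality of the canonical case and rests on the two non-formal imported inputs above — the $\tau$-symmetry description of canonical (semigroup) ideals, and the compatibility of the combinatorial ``length\,$=$\,distance'' count with the canonical duality.
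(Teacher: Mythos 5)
The paper offers no proof of Theorem~\ref{016} beyond the citation to \cite[Theorems~5.3.2 and 5.3.4]{j.commut.algebra.11.81}, and your sketch is essentially a reconstruction of the argument given there: reduce by twisting on both sides (your computation with Remark~\ref{102}.\ref{102b} and Remark~\ref{003}.\ref{003b}/\ref{003e} is correct), invoke the J\"ager--D'Anna $\tau$-symmetric description of normalized canonical ideals together with its identification with $K^0_{\Gamma_R}$ for part \ref{016a}, and for part \ref{016b} combine the easy inclusion $\Gamma_{\fracid K:\fracid I}\subset\Gamma_\fracid K-\Gamma_\fracid I$ of Remark~\ref{022} with the Herzog--Kunz colength identity, the dictionary translating colengths of fractional ideals into combinatorial distances of their value semigroup ideals, and the fact that the duality $E\mapsto K-E$ preserves that distance. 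These are exactly the two non-formal pillars of the cited proof, and your part \ref{016b} is laid out soundly (the base case $\Gamma_\fracid K-\Gamma_\fracid K=\Gamma_R$ follows, as you say, from $K-S=K$ and $K-(K-E)=E$).

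One step is glossed over, and it sits precisely at the delicate direction. The asserted equivalence between $\mu_{\Gamma_\fracid K}=\zero$ and $R\subset\fracid K\subset\ol R$ is false: a minimal value of $\zero$ does force $\fracid K\subset\ol R$ (any element of negative value in some component could be corrected to a regular one by adding an element of large value), but it does not force $R\subset\fracid K$ --- for instance $\fracid K=(1+t)R$ over $R=\KK\ssbr{t^2,t^3}$ has $\Gamma_\fracid K=\Gamma_R$ yet $1\notin\fracid K$. Likewise $\Gamma_\fracid K=K^0_{\Gamma_R}\supset\Gamma_R$ does not give $\fracid K\supset R$. Consequently, in the ``if'' direction of \ref{016a} you cannot yet place $\fracid K$ between $R$ and $\ol R$ and quote D'Anna's criterion. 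The cited proof closes this by first producing an actual canonical ideal $\fracid K'$ of $R$ with $R\subset\fracid K'\subset\ol R$ (existence \`a la Herzog--Kunz \cite{herzog1971}), twisting so that $\Gamma_\fracid K=\Gamma_{\fracid K'}$, and then using the same length--distance machinery in the rigidity form ``$\fracid I\subset\fracid J$ and $\Gamma_\fracid I=\Gamma_\fracid J$ imply $\fracid I=\fracid J$'' (the statement behind Lemma~\ref{081}.\ref{081b}) to identify $\fracid K$ with a unit multiple of $\fracid K'$. You should add this comparison step; with it, the rest of your outline goes through.
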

\begin{proof}
See \cite[Theorems~5.3.2 and 5.3.4]{j.commut.algebra.11.81}.
\end{proof}


Before recalling the main properties of canonical semigroup ideals in Theorem~\ref{018} we need a few technical definitions in order to describe canonical semigroup ideals explicitly.


\begin{definition}
\label{039}
Let $I$ be a finite set, and let $\alpha \in \ZZ^I$.
\begin{deflist}
\item\label{039a}
For any subset $J$ of $I$ we set
\[
\Delta_J \br{\alpha} = \cbr{\beta \in \ZZ^s \mid \alpha_j = \beta_j \text{ for all } j \in J \text{ and } \alpha_i < \beta_i \text{ for all } i \in I \setminus J}.
\]
\item\label{039b}
We set
\[
\Delta \br{\alpha} = \bigcup_{i \in I} \Delta_i \br{\alpha},
\]
where $\Delta_i \br{\alpha} = \Delta_{\cbr{i}} \br{\alpha}$ for any $i \in I$.
\item\label{039c}
For a subset $E$ of $\ZZ^s$ and any subset $J$ of $I$ we denote
\[
\Delta_J^E \br{\alpha} = \Delta_J \br{\alpha} \cap E.
\]
Similarly, we write $\Delta_i^E \br{\alpha}$ for given $i \in I$ as well as $\Delta^E \br{\alpha}$ when considering the union.
\item\label{039d}
For any subset $J$ of $I$ we set
\[
\ol{\Delta}_J \br{\alpha} = \cbr{\beta \in \ZZ^s \mid \alpha_j = \beta_j \text{ for all } j \in J \text{ and } \alpha_i \leq \beta_i \text{ for all } i \in I \setminus J},
\]
and as in \ref{039a}, \ref{039b}, and \ref{039c} we define accordingly $\ol \Delta_i \br{\alpha}$, $\ol \Delta \br{\alpha}$, $\ol \Delta_J^E \br{\alpha}$, $\ol \Delta_i^E \br{\alpha}$, and $\ol \Delta^E \br{\alpha}$ for any $i \in I$ and for any subset $E$ of $\ZZ^I$.
\end{deflist}
\end{definition}


Turning Delgado's symmetry condition for value semigroups of Gorenstein algebroid curves \cite{man.math.61.285} into the description of a semigroup ideal, D'Anna characterized \enquote{normalized} canonical ideals of admissible rings by having the following value semigroup ideal \cite{comm.algebra.25.2939}.


\begin{definition}
\label{017}
Let $S$ be a good semigroup. The set
\[
K_S^0 = \cbr{\alpha \in \ZZ^I \mid \Delta^S \br{\tau_S - \alpha} = \emptyset}
\]
is called the \emph{(normalized) canonical (semigroup) ideal} of $S$.
\end{definition}


As for fractional ideals, canonical semigroup ideals induce a duality on good semigroup ideals, and they are characterized by this property.


\begin{theorem}
\label{018}
Every good semigroup has a canonical ideal. Moreover, for any $K \in \GSI_S$ the following are equivalent.
\begin{enumerate}[label=(\alph*),ref=(\alph*)]
\item\label{018a}
$K$ is a canonical ideal of $S$.
\item\label{018b}
There is an $\alpha \in \ZZ^I$ such that $\alpha + K = K_S^0$.
\item\label{018c}
For all $E \in \GSI_S$ we have $K - \br{K - E} = E$.
\end{enumerate}
If $K$ is a canonical ideal of $S$, then the following hold.
\begin{enumerate}[label=(\arabic*),ref=(\arabic*)]
\item\label{018d}
If $S \subset K \subset \ol S$, then $K = K_S^0$.
\item\label{018e}
$K - E \in \GSI_S$ for every $E \in \GSI_S$.
\item\label{018f}
If $S'$ is a good semigroup with $S \subset S' \subset \ol S$, then $K' = K - S$ is a canonical ideal of $S'$.
\end{enumerate}
\end{theorem}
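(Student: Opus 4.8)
The plan is to extract every assertion from three facts about the normalized canonical ideal $K_S^0$ of Definition~\ref{017}: \emph{(I)} $K_S^0 \in \GSI_S$, and in fact $S \subseteq K_S^0 \subseteq \ol S$; \emph{(II)} $K_S^0 - E \in \GSI_S$ for every $E \in \GSI_S$; and \emph{(III)} $K_S^0 - \br{K_S^0 - E} = E$ for every $E \in \GSI_S$. The common engine is a $\Delta$-criterion for membership in $K_S^0 - E$: for every $E \in \GSI_S$ and $\alpha \in \ZZ^I$,
\[
\alpha \in K_S^0 - E \quad\Longleftrightarrow\quad \Delta^E\br{\tau_S - \alpha} = \emptyset .
\]
Both implications are a single change of summation variable $\gamma \leftrightarrow \beta + \gamma$, using only $\zero \in S$ (Definition~\ref{096a}) and $E + S \subseteq E$ — the key observation being that if $\beta \in \Delta_i\br{\tau_S - \alpha}$, then $\tau_S - \alpha - \beta$ has $i$-th coordinate $0$ and all other coordinates negative, so $\zero$ lies in $S \cap \Delta_i\br{\tau_S - \alpha - \beta}$. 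Since this argument uses nothing specific to $E$, the criterion applies with $K_S^0$ in place of $E$ as well.

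From the criterion, \emph{(I)} is obtained by checking that $K_S^0$ is a semigroup ideal and then translating properties~\ref{E1} and~\ref{E2} of $S$ into the same for $K_S^0$; here $\zero \in K_S^0$ is the assertion $\Delta^S\br{\tau_S} = \emptyset$ (and then $S = \zero + S \subseteq K_S^0$ since $K_S^0$ is an ideal of $S$), the inclusion $K_S^0 \subseteq \ol S$ holds because a negative coordinate of $\tau_S - \alpha$ empties $\Delta^S\br{\tau_S - \alpha}$, and $\gamma_S + \ol S \subseteq K_S^0$ forces $\gamma_{K_S^0} = \gamma_S$. For \emph{(II)}, the criterion writes $K_S^0 - E = \cbr{\alpha \mid \alpha + E \subseteq K_S^0}$; property~\ref{E1} for this set is immediate from $\inf\br{\alpha,\beta} + \eta = \inf\br{\alpha + \eta, \beta + \eta}$ together with~\ref{E1} for $K_S^0$, whereas property~\ref{E2} for $K_S^0 - E$ is the delicate point, and this is where property~\ref{E2} of $S$ really gets used: I would feed appropriate translates of elements of $E$ into~\ref{E2} for $K_S^0$ and then argue that the resulting witness can be pulled back into $K_S^0 - E$.

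For \emph{(III)}, the inclusion $E \subseteq K_S^0 - \br{K_S^0 - E}$ is formal from the criterion applied twice: if $\alpha \in E$ and $\delta \in \br{K_S^0 - E} \cap \Delta_i\br{\tau_S - \alpha}$, then $\Delta^E\br{\tau_S - \delta} = \emptyset$ by the criterion, yet $\alpha \in E \cap \Delta_i\br{\tau_S - \delta}$ — a contradiction. The reverse inclusion $K_S^0 - \br{K_S^0 - E} \subseteq E$ is the heart of the matter and the step I expect to be the main obstacle. I would argue by contraposition: given $\alpha \notin E$, one must produce $\delta \in K_S^0 - E$ with $\delta \in \Delta_i\br{\tau_S - \alpha}$ for some $i$, equivalently a vector $\rho \le \alpha$ with $\rho_i = \alpha_i$, $\rho_j < \alpha_j$ for $j \ne i$, and $\Delta^E\br{\rho} = \emptyset$. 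By Lemma~\ref{064} one may first replace $\alpha$ by $\inf\br{\alpha, \gamma_E}$, so the offending point lies below the conductor of $E$; then $\rho$ is constructed coordinate by coordinate, descending from $\alpha$ and invoking property~\ref{E2} of $E$ to ensure that no element of $E$ meets $\rho$ along a $\Delta$-stratum. Choosing the correct coordinate $i$ and controlling all $\Delta_{i'}^E\br{\rho}$ simultaneously is the crux.

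It then remains to harvest the consequences. From \emph{(III)} and Lemma~\ref{004} one reads off $\mu_{K_S^0 - E} = \gamma_{K_S^0} - \gamma_E$ for all $E \in \GSI_S$, so every $E \in \GSI_S$ with $\gamma_E = \gamma_{K_S^0}$ has $\zero \in K_S^0 - E$, i.e.\ $E \subseteq K_S^0$; this says exactly that $K_S^0$ is a canonical ideal in the sense of Definition~\ref{015b}, which in particular proves existence. Since both that defining property and property~\ref{018c} are invariant under translating the ideal by a vector, and since $K_S^0$ has~\ref{018c} by \emph{(III)}, the implications \ref{018b}$\Rightarrow$\ref{018a} and \ref{018b}$\Rightarrow$\ref{018c} follow via Remark~\ref{003}. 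For the converse implications, normalize $K$ by a translation so that $\gamma_K = \gamma_{K_S^0}$; the conductor count just made gives $K \subseteq K_S^0$, and since the normalized $K$ is itself a canonical ideal in the sense of Definition~\ref{015b} — under hypothesis~\ref{018c} this uses that putting $E = S$ in~\ref{018c} forces $K - K = S$ by Remark~\ref{003d}, whence that property follows with \emph{(III)} and Lemma~\ref{004} — applying Definition~\ref{015b} to the pair $K \subseteq K_S^0$ yields $K = K_S^0$, so $K$ is a translate of $K_S^0$. Item~\ref{018d} is then immediate because $S \subseteq K \subseteq \ol S$ makes the normalizing translation trivial; item~\ref{018e} is \emph{(II)} carried along \ref{018a}$\Leftrightarrow$\ref{018b}; and item~\ref{018f} follows from the compatibility $K^0_{S'} = K_S^0 - S'$ of normalized canonical ideals under $S \subseteq S' \subseteq \ol S$, which makes $K - S'$ a translate of the normalized canonical ideal of $S'$.
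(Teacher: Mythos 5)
The paper offers no internal proof of Theorem~\ref{018} to compare against --- it cites \cite[Theorem~5.2.6]{j.commut.algebra.11.81} --- so your attempt has to stand on its own. The architecture you set up is the right one: your membership criterion $\alpha \in K_S^0 - E \Leftrightarrow \Delta^E\br{\tau_S - \alpha} = \emptyset$ is exactly Lemma~\ref{021}, your change-of-variables justification of it is correct, and the harvesting of existence, of the equivalences \ref{018a}--\ref{018c}, and of \ref{018d}--\ref{018f} from your facts \emph{(I)}--\emph{(III)} is essentially sound. (One slip in \ref{018f}: $K_{S'}^0$ is not $K_S^0 - S'$ but its translate by $\gamma_{S'} - \gamma_S$, since the criterion produces $\cbr{\alpha \mid \Delta^{S'}\br{\tau_S - \alpha} = \emptyset}$ rather than $\cbr{\alpha \mid \Delta^{S'}\br{\tau_{S'} - \alpha} = \emptyset}$; this is harmless because canonical ideals are only determined up to translation.)

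The proposal is nevertheless not a proof, because the two statements carrying all the content are precisely the ones you defer. For \emph{(II)} you must produce, from $\alpha, \beta \in K_S^0 - E$ with $\alpha_j = \beta_j$, a witness $\varepsilon \in K_S^0 - E$ as in property~\ref{E2}; you say only that you ``would feed appropriate translates into \ref{E2} for $K_S^0$ and pull the witness back,'' but the witness supplied by \ref{E2} for $K_S^0$ depends on the element of $E$ being translated by, and nothing in your text shows a single $\varepsilon$ works for all of $E$ simultaneously. For \emph{(III)} the inclusion $K_S^0 - \br{K_S^0 - E} \subset E$ requires, for each $\alpha \notin E$, a vector $\rho$ with $\rho \leq \alpha$, equality in one coordinate, and $\Delta^E\br{\rho} = \emptyset$; you name the simultaneous control of all $\Delta_{i'}^E\br{\rho}$ as ``the crux'' and do not carry out the coordinatewise descent or show it terminates. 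These are exactly the steps on which the cited source spends its effort. Finally, the implication \ref{018c}$\Rightarrow$\ref{018a} is only gestured at: from $K - K = S$ and $K \subset E$, $\gamma_E = \gamma_K$, you would need $K - E = S$ to conclude $E = K - \br{K - E} = K$, but $S \subset K - E$ is equivalent to $E \subset K$, the very thing to be proved; and the route via Lemma~\ref{004} applied to $\gamma_{K - \br{K-E}}$ presupposes $K - E \in \GSI_S$, which is not known for a $K$ only assumed to satisfy \ref{018c}. So the reduction to \emph{(I)}--\emph{(III)} is well executed, but \emph{(II)}, \emph{(III)}, and \ref{018c}$\Rightarrow$\ref{018a} remain genuine gaps.
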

\begin{proof}
See \cite[Theorem~5.2.6]{j.commut.algebra.11.81}.
\end{proof}


We are particularly interested in the case when an admissible ring, respectively a good semigroup, is its own canonical ideal.


\begin{definition}
\label{019}\
\begin{enumerate}[label=(\arabic*),ref=(\arabic*)]
\item\label{019a}
An admissible ring $R$ is called \emph{Gorenstein} if $R$ is a canonical ideal of itself (see \cite[Remark~5.1.4]{j.commut.algebra.11.81}).
\item\label{019b}
A good semigroup $S$ is called \emph{symmetric} if $S$ is a canonical ideal of itself, i.e.
\[
S = \cbr{\alpha \in \ZZ^I \mid \Delta^S \br{\tau_S - \alpha} = \emptyset}.
\]
\end{enumerate}
\end{definition}


\begin{proposition}
\label{020}
An admissible ring $R$ is Gorenstein if and only if $\Gamma_R$ is symmetric.
\end{proposition}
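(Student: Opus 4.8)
The plan is to deduce this from the compatibility results already assembled in the excerpt, principally Theorem~\ref{016}.\ref{016a}. The statement to prove, Proposition~\ref{020}, asserts that an admissible ring $R$ is Gorenstein if and only if $\Gamma_R$ is symmetric. By Definition~\ref{019}.\ref{019a}, $R$ is Gorenstein precisely when $R$ is a canonical ideal of itself (as a regular fractional ideal, $R \in \RFI_R$ by Remark~\ref{102}.\ref{102d} and Definition~\ref{036}, with $R^\reg \ne \emptyset$). Likewise, by Definition~\ref{019}.\ref{019b}, $\Gamma_R$ is symmetric precisely when $\Gamma_R$ is a canonical semigroup ideal of itself.

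First I would apply Theorem~\ref{016}.\ref{016a} with the specific choice $\fracid K = R$. The theorem says that a regular fractional ideal $\fracid K$ of $R$ is canonical if and only if $\Gamma_{\fracid K}$ is a canonical ideal of $\Gamma_R$. Taking $\fracid K = R$, we have $\Gamma_R = \Gamma_{\fracid K}$ by Definition~\ref{028}, so the theorem immediately gives: $R$ is a canonical ideal of $R$ if and only if $\Gamma_R$ is a canonical ideal of $\Gamma_R$. Combining this equivalence with the two definitions above yields exactly the claim. I would also note (for the reader's comfort) that $\Gamma_R \in \GSI_{\Gamma_R}$ by Proposition~\ref{091}, so that speaking of $\Gamma_R$ as a (candidate) canonical semigroup ideal of itself is legitimate.

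There is essentially no obstacle here: the proposition is a direct corollary of Theorem~\ref{016}.\ref{016a} specialized to $\fracid K = R$, together with unwinding the two definitions of ``Gorenstein'' and ``symmetric.'' The only point requiring a sentence of care is checking that $R$ qualifies as a regular fractional ideal of itself so that Theorem~\ref{016} applies to it, and that the value semigroup ideal attached to $R$ viewed as a fractional ideal is literally $\Gamma_R$ — both of which are immediate from the definitions. Accordingly, the proof I would write is short:

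\begin{proof}
By Definition~\ref{036} and Remark~\ref{102}.\ref{102d}, $R$ is a regular fractional ideal of itself, and by Definition~\ref{028} its value semigroup ideal is $\Gamma_R$, which lies in $\GSI_{\Gamma_R}$ by Proposition~\ref{091}. Applying Theorem~\ref{016}.\ref{016a} with $\fracid K = R$, the fractional ideal $R$ is a canonical ideal of $R$ if and only if $\Gamma_R$ is a canonical ideal of $\Gamma_R$. By Definition~\ref{019}, the left-hand condition means that $R$ is Gorenstein and the right-hand condition means that $\Gamma_R$ is symmetric. This proves the claim.
\end{proof}
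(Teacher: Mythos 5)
Your proof is correct: specializing Theorem~\ref{016}.\ref{016a} to $\fracid K = R$ and unwinding Definition~\ref{019} gives exactly the statement, and the minor points you check ($R \in \RFI_R$, the two readings of $\Gamma_R$ agreeing) are indeed all that needs checking. The paper itself only cites \cite[Proposition~5.3.6]{j.commut.algebra.11.81} here, and that reference's argument is the same reduction to the value-semigroup characterization of canonical ideals, so your write-up is essentially the paper's route made explicit.
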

\begin{proof}
See \cite[Proposition~5.3.6]{j.commut.algebra.11.81}.
\end{proof}


In the following we collect some results on canonical ideals for later reference.


\begin{proposition}
\label{034}
Let $R$ be a one-dimensional equidimensional semilocal Cohen--Macaulay ring with canonical ideal $\fracid{K}$. A regular fractional ideal $\fracid{K}'$ of $R$ is a canonical ideal of $R$ if and only if there is an $x \in Q_R^\reg$ such that $\fracid{K}' = x \fracid{K}$.
\end{proposition}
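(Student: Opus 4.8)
The plan is to prove both implications using the characterization of canonical ideals via the duality property. First I would establish the easy direction: suppose $\fracid{K}' = x \fracid{K}$ for some $x \in Q_R^\reg$. Then for any $\fracid{I} \in \RFI_R$, repeated use of Remark~\ref{102}.\ref{102b} gives $\fracid{K}' : \fracid{I} = x \br{\fracid{K} : \fracid{I}}$ and hence
\[
\fracid{K}' : \br{\fracid{K}' : \fracid{I}} = x \fracid{K} : \br{x \br{\fracid{K} : \fracid{I}}} = x \, x^{-1} \br{\fracid{K} : \br{\fracid{K} : \fracid{I}}} = \fracid{K} : \br{\fracid{K} : \fracid{I}} = \fracid{I},
\]
so $\fracid{K}'$ is again a canonical ideal. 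Note this direction needs no hypothesis beyond $x$ being regular, and it uses only that $\fracid K : \fracid I \in \RFI_R$ (Remark~\ref{102}.\ref{102e}) so that the formula applies a second time.

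For the converse, assume $\fracid{K}$ and $\fracid{K}'$ are both canonical ideals of $R$. The idea is to produce the element $x$ by dualizing one against the other. Set $x \fracid K := \fracid{K} : \br{\fracid{K} : \fracid{K}'}$; since $\fracid{K}$ is canonical and $\fracid{K}' \in \RFI_R$, this equals $\fracid{K}'$. So it suffices to show $\fracid{K} : \fracid{K}'$ is generated by a single element of $Q_R^\reg$ over... but that is not quite the right formulation. Instead I would argue: the fractional ideal $\fracid{J} := \fracid{K}' : \fracid{K}$ lies in $\RFI_R$ by Remark~\ref{102}.\ref{102e}, and by the natural isomorphism $\Hom_R(\fracid K, \fracid K') \cong \fracid{K}' : \fracid{K}$ recalled before Proposition~\ref{086}, every element of $\fracid J$ is multiplication by a fixed element of $Q_R$. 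The crucial claim is that $\fracid J$ is a \emph{principal} regular fractional ideal, i.e.\ $\fracid{J} = x R$ for some $x \in Q_R^\reg$, for then $\fracid{K}' \supset \fracid{J}\fracid{K} = x\fracid K$, and a symmetric argument with $\fracid{K} : \fracid{K}'$ gives the reverse inclusion.

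To see that $\fracid{J} = \fracid K' : \fracid K$ is principal, I would use the duality both ways: applying the canonical property of $\fracid{K}'$ to $\fracid I = \fracid K$ gives $\fracid{K}' : \br{\fracid{K}' : \fracid{K}} = \fracid{K}$, i.e. $\fracid{K}' : \fracid{J} = \fracid{K}$; applying the canonical property of $\fracid K$ to $\fracid I = \fracid K'$ gives $\fracid K : (\fracid K : \fracid K') = \fracid K'$. Writing $\fracid{J}' = \fracid K : \fracid K'$, one gets $\fracid J \fracid J' \subset R$ and, composing the two isomorphisms $\Hom_R(\fracid K,\fracid K')$ and $\Hom_R(\fracid K',\fracid K)$, that the composite is the identity — so the element of $Q_R$ representing a suitable generator of $\fracid J$ is a unit in $Q_R$, forcing $\fracid{J}$ and $\fracid{J}'$ to be mutually inverse principal ideals $xR$ and $x^{-1}R$. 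Then $x \fracid K = \fracid J \fracid K \subset \fracid K'$ and $x^{-1}\fracid K' = \fracid J' \fracid K' \subset \fracid K$, whence $\fracid{K}' = x\fracid{K}$. I expect the main obstacle to be making the last step rigorous: one must show $\fracid{J}$ is genuinely principal (not merely that its elements act by multiplication), which comes down to checking that $\fracid K : (\fracid K : \fracid K') = \fracid K'$ together with the module isomorphisms forces $(\fracid K' : \fracid K)(\fracid K : \fracid K') = R$ exactly, and then invoking that an invertible fractional ideal of a semilocal ring is principal.
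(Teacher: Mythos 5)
Your forward direction is correct and complete: $\br{x\fracid K} : \fracid I = x\br{\fracid K : \fracid I}$ and a second application of Remark~\ref{102}.\ref{102b} give $\fracid K' : \br{\fracid K' : \fracid I} = \fracid I$, exactly as you wrote. The converse, however, has a genuine gap at its decisive step, and it is the step you yourself flag. The assertion that \enquote{composing the two isomorphisms $\Hom_R\br{\fracid K,\fracid K'}$ and $\Hom_R\br{\fracid K',\fracid K}$, the composite is the identity} is not an argument: these Hom modules carry no distinguished elements to compose, and the existence of \emph{any} isomorphism $\fracid K \to \fracid K'$ --- equivalently, via $\Hom_R\br{\fracid K,\fracid K'} \cong \fracid K' : \fracid K$, of an $x \in Q_R^\reg$ with $x\fracid K = \fracid K'$ --- is precisely the content of the proposition. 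Nothing in your text establishes $\br{\fracid K' : \fracid K}\br{\fracid K : \fracid K'} = R$, and without that neither the principality of $\fracid K' : \fracid K$ nor the two inclusions at the end are available. The paper sidesteps this entirely by citing \cite[Satz~2.8]{herzog1971} and \cite[Proposition~5.1.5]{j.commut.algebra.11.81}, i.e.\ in essence the uniqueness of the canonical module up to isomorphism.

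Your plan is nevertheless salvageable, but the missing piece must actually be supplied. Taking $\fracid I = R$ in the defining property and using Remark~\ref{102}.\ref{102d} gives $\fracid K : \fracid K = R = \fracid K' : \fracid K'$. Writing $\fracid J = \fracid K' : \fracid K$ and $\fracid J' = \fracid K : \fracid K'$ (regular fractional ideals by Remark~\ref{102}.\ref{102e}), the identity $\fracid A : \br{\fracid B \fracid C} = \br{\fracid A : \fracid B} : \fracid C$ and biduality yield $\fracid K' : \br{\fracid J \fracid K} = \br{\fracid K' : \fracid J} : \fracid K = \fracid K : \fracid K = R$, hence $\fracid J \fracid K = \fracid K' : \br{\fracid K' : \br{\fracid J \fracid K}} = \fracid K' : R = \fracid K'$; similarly $\fracid K : \br{\fracid J \fracid J'} = \br{\fracid K : \fracid J'} : \fracid J = \fracid K' : \fracid J = \fracid K$, so $\fracid J \fracid J' = \fracid K : \br{\fracid K : \br{\fracid J \fracid J'}} = R$. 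Thus $\fracid J$ is invertible, and only now does the semilocal hypothesis enter: an invertible fractional ideal of a semilocal ring is principal (trivial Picard group), say $\fracid J = xR$ with $x$ necessarily regular, whence $\fracid K' = \fracid J \fracid K = x \fracid K$. Until either this computation or the uniqueness theorem for canonical modules is invoked, your converse is incomplete.
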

\begin{proof}
See \cite[Satz~2.8]{herzog1971} and \cite[Proposition~5.1.5]{j.commut.algebra.11.81}.
\end{proof}


Canonical modules propagate from a local Cohen--Macaulay ring $R$ to a finite local $R$-algebra, see \cite[Theorem~3.3.7.(b)]{bruns1998}. As explained in \cite[Remark~5.14]{j.commut.algebra.11.81}, canonical ideals of one-dimensional equidimensional Cohen--Macaulay rings are canonical modules. The following lemma provides a direct proof of the general propagation formula for canonical ideals in dimension one without the assumption of locality.


\begin{lemma}
\label{035}
Let $R$ and $A$ be one-dimensional equidimensional Cohen--Macaulay rings such that $A \in \RFI_R$. If $\fracid{K}$ is a canonical ideal of $R$, then $\fracid{K}: A$ is a canonical ideal of $A$.
\end{lemma}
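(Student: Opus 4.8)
The plan is to show directly that $\fracid K : A$ satisfies the defining property of a canonical ideal of $A$, namely that $\br{\fracid K : A} :_A \br{\br{\fracid K : A} :_A \fracid J} = \fracid J$ for every $\fracid J \in \RFI_A$. The key observation is that since $A$ is an integral extension of $R$ lying in $Q_R = Q_A$ (by Proposition~\ref{086}, as $A \in \RFI_R$), every regular fractional ideal of $A$ is also a regular fractional ideal of $R$, and Hom-computations over $A$ and over $R$ relate via the formula $\fracid H :_A \fracid J = \br{\fracid H :_R \fracid J} :_R A$ when $\fracid H$ is an $A$-module, which itself follows from the associativity $\br{\fracid H : \fracid J} : A = \fracid H : \br{\fracid J A} = \fracid H : \fracid J$ (using $\fracid J A = \fracid J$ since $\fracid J$ is an $A$-module) together with Remark~\ref{102}.\ref{102d}; more precisely, one uses Remark~\ref{003}.\ref{003a}-type associativity at the level of quotients, i.e.\ $\fracid H : \br{\fracid J \fracid J'} = \br{\fracid H : \fracid J} : \fracid J'$.

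First I would fix $\fracid J \in \RFI_A$ and note $\fracid J \in \RFI_R$ by Remark~\ref{102}.\ref{102e} (since $A \in \RFI_R$ and $\fracid J = A \fracid J$, or directly because the denominator condition passes down). Then I would compute
\[
\br{\fracid K : A} :_A \fracid J = \br{\br{\fracid K : A} : \fracid J} : A = \br{\fracid K : \br{A \fracid J}} : A = \br{\fracid K : \fracid J} : A,
\]
using $A \fracid J = \fracid J$ in the last step. Applying this twice,
\[
\br{\fracid K : A} :_A \br{\br{\fracid K : A} :_A \fracid J} = \br{\fracid K : \br{\br{\fracid K : \fracid J} : A}} : A.
\]
Now $\br{\fracid K : \fracid J} : A$ is itself an $A$-submodule of $Q_R$ (being a quotient by $A$), hence a regular fractional ideal of $R$ on which $\fracid K : (\fracid K : -)$ acts as the identity by the canonical property of $\fracid K$ over $R$; so $\fracid K : \br{\br{\fracid K : \fracid J} : A} = \fracid K : \fracid J$ — wait, that is not quite the canonical property, which says $\fracid K : \br{\fracid K : \fracid H} = \fracid H$. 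I would instead write $\br{\fracid K : \fracid J} : A = \fracid K : \br{A\fracid J} = \fracid K : \fracid J$, so actually $\br{\fracid K : \fracid J} : A = \fracid K : \fracid J$ already, and then $\fracid K : \br{\fracid K : \fracid J} = \fracid J$ gives the result after a final intersection with $A$: $\br{\fracid K : \fracid J} : A = \fracid K:\fracid J$ is an $A$-module so $\br{\fracid K : \br{\fracid K : \fracid J}} : A = \fracid J : A = \fracid J$ since $\fracid J$ is already an $A$-module. Hence $\br{\fracid K : A} :_A \br{\br{\fracid K : A} :_A \fracid J} = \fracid J$.

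Two auxiliary points need care. First, one must verify $\fracid K : A \in \RFI_A$: it is an $A$-module since $\fracid K : A = \fracid K : A$ and $\br{\fracid K : A} A \subset \fracid K : A$ by Remark~\ref{102}.\ref{102b}-style manipulation; it is a fractional ideal of $A$ by Remark~\ref{102}.\ref{102e} (as $\fracid K, A \in \RFI_R$ and $\RFI_R \ni \fracid K : A$, then pass to $A$); and it is regular because $\fracid K : A$ contains $x\fracid K$ for a suitable regular $x$ (choose $x \in R^\reg$ with $xA \subset R$, then $x\fracid K \subset \fracid K : A$ after checking $x\fracid K \cdot A = x \fracid K A \subset \fracid K$, which needs $A \fracid K \subset \fracid K$ — false in general, so instead use that $\fracid K : A$ is nonzero and contains the regular element obtained from $\fracid C_{\fracid K}$). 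Second, $A$ being one-dimensional equidimensional Cohen--Macaulay is part of the hypothesis, so $A$ genuinely has the right shape for "canonical ideal of $A$" to make sense. The main obstacle I anticipate is bookkeeping the identity $\fracid H :_A \fracid J = \br{\fracid H :_R \fracid J}$ for $A$-modules $\fracid H$ with $\fracid J$ an $A$-module — this is really just $\fracid H : \fracid J = \fracid H : (A\fracid J) = (\fracid H : A) : \fracid J$ combined with $\fracid H : A = \fracid H$ (when $\fracid H$ is an $A$-module, by Remark~\ref{102}.\ref{102d} applied over $A$) — and making sure the regularity and fractional-ideal conditions transfer cleanly between $R$ and $A$. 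Everything else is formal manipulation of ideal quotients via Remark~\ref{102} and the associativity $\fracid K : (\fracid I \fracid J) = (\fracid K : \fracid I) : \fracid J$.
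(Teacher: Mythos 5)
Your proof is correct and is essentially the paper's own argument: after observing that $\fracid{K} : A \in \RFI_R \cap \RFI_A$ and that every $\fracid{J} \in \RFI_A$ also lies in $\RFI_R$, both proofs reduce to the formal quotient computation $\br{\fracid{K} : A} : \br{\br{\fracid{K} : A} : \fracid{J}} = \br{\fracid{K} : \br{\fracid{K} : \fracid{J}}} : A = \fracid{J} : A = \fracid{J}$. Your detour on the regularity of $\fracid{K} : A$ is unnecessary (for $x \in R^\reg$ with $xA \subset R$ one has $x \fracid{K} A = \fracid{K} \br{xA} \subset \fracid{K}$, so your first argument was already fine, and in any case Remark~\ref{102}.\ref{102e} gives $\fracid{K} : A \in \RFI_R$), but this does not affect the validity of the proof.
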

\begin{proof}
By \cite[Lemma~2.11]{korell2018}, $\fracid K : A \in \RFI_R \cap \RFI_A$. Moreover, if $\fracid I \in \RFI_A$, then $\fracid I \in \RFI_R$ (see \cite[Lemma~2.12]{korell2018}). Therefore,
\[
\br{\fracid K : A} : \br{\br{\fracid K : A} : \fracid I} = \br{\fracid K : A} : \br{\fracid K : \fracid I} = \br{\fracid K : \br{\fracid K : \fracid I}} : A = \fracid I.
\qedhere
\]
\end{proof}


\begin{proposition}
\label{024}
Let $S$ be a good semigroup, and let $K$ be a canonical ideal of $S$. Then $E \subset K$ for every $E \in \GSI_S$ with $\gamma_E = \gamma_K$.
\end{proposition}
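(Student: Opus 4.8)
The plan is to deduce the inclusion from the duality property of the canonical ideal $K$ together with the conductor formula of Lemma~\ref{004}, without ever invoking the explicit description of $K_S^0$.

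First I would pass to the \emph{dual} $D := K - E$. Since $K$ is a canonical ideal of $S$, Theorem~\ref{018}.\ref{018e} guarantees $D \in \GSI_S$; in particular $D$ has a well-defined minimal element $\mu_D$, so Lemma~\ref{004} is applicable to the pair $K, D \in \GSI_S$. Moreover, again because $K$ is canonical, Theorem~\ref{018}.\ref{018c} gives the reconstruction identity $K - D = K - (K - E) = E$.

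Now read off the conductor of both sides of this identity via Lemma~\ref{004}: $\gamma_E = \gamma_{K - D} = \gamma_K - \mu_D$. By hypothesis $\gamma_E = \gamma_K$, so this forces $\mu_D = \zero$. Since the minimal element of a good semigroup ideal lies in that ideal, $\zero \in D = K - E$, which by the definition of the difference (Definition~\ref{054}) means exactly $E = \zero + E \subset K$ — the assertion.

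I do not expect a genuine obstacle here: the two nontrivial ingredients — that $K - E$ is actually a \emph{good} semigroup ideal (not merely a semigroup ideal) and the formula $\gamma_{E - F} = \gamma_E - \mu_F$ — are already available as Theorem~\ref{018}.\ref{018e} and Lemma~\ref{004}, and the only thing to "see" is to combine them through the idempotent-like identity $K - (K - E) = E$. A more pedestrian alternative would be to translate so that $K = K_S^0$ (Theorem~\ref{018}.\ref{018b}), use Lemma~\ref{064} to reduce to showing $\alpha \in K_S^0$ for those $\alpha \in E$ with $\alpha \le \gamma_E$, and then argue from the explicit form of $K_S^0$ that $\Delta^S(\tau_S - \alpha) = \emptyset$; but that route seems to require the separate identity $\gamma_{K_S^0} = \gamma_S$ and is considerably more laborious, so I would avoid it.
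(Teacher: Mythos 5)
Your argument is correct, and each step is backed by results the paper quotes: $D = K - E \in \GSI_S$ by Theorem~\ref{018}.\ref{018e}, the biduality $K - D = K - \br{K - E} = E$ by Theorem~\ref{018}.\ref{018c}, the conductor formula $\gamma_E = \gamma_{K - D} = \gamma_K - \mu_D$ by Lemma~\ref{004} (both arguments are good semigroup ideals, as required), and the fact that the minimal element of a good semigroup ideal belongs to it (see the discussion preceding Definition~\ref{052}); so $\gamma_E = \gamma_K$ forces $\mu_D = \zero$, hence $\zero \in K - E$, which by Definition~\ref{054} is exactly $E \subset K$. Note that the paper itself gives no argument for Proposition~\ref{024}: it only refers to the proof of Proposition~5.2.10 in \cite{j.commut.algebra.11.81}, where the statement is obtained more directly from the explicit description of the canonical ideal, essentially in the spirit of your \enquote{pedestrian alternative} via Theorem~\ref{018}.\ref{018b}, Definition~\ref{017}, and Lemma~\ref{021}. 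Your route is shorter and purely formal, but it consumes the full duality statement of Theorem~\ref{018}; this is harmless here, since in the source the duality theorem (Theorem~5.2.6 there) is established before Proposition~5.2.10, so no circularity arises, and within the present paper, where Theorem~\ref{018} and Lemma~\ref{004} are taken as known, your derivation is a valid and self-contained substitute. The trade-off is that the original, explicit argument works at a more elementary level and would remain available in a development where the duality theorem is not yet (or not at all) at hand.
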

\begin{proof}
See \cite[Proof of Proposition~5.2.10]{j.commut.algebra.11.81}.
\end{proof}


\begin{lemma}
\label{040}
Let $S$ be a local good semigroup, and let $K$ be a canonical semigroup ideal of $S$. Then $\ol \Delta \br{\tau_K} \setminus \Delta \br{\tau_K} \subset K$.
\end{lemma}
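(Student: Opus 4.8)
The plan is to reduce to the normalized canonical ideal $K_S^0$ of Definition~\ref{017} and then to read the containment off its explicit description, the locality of $S$ entering only in the decisive step. Throughout I use that, since $S$ is local, $\zero$ is the only element of $S$ with a vanishing coordinate, and that $S \subset \ol S = \NN^I$.

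First the reduction. The assertion is invariant under translation of $K$: for $c \in \ZZ^I$ one has $\tau_{c+K} = c + \tau_K$, and directly from Definition~\ref{039} also $\ol\Delta(c + \tau_K) = c + \ol\Delta(\tau_K)$ and $\Delta(c + \tau_K) = c + \Delta(\tau_K)$, so $\ol\Delta(\tau_{c+K}) \setminus \Delta(\tau_{c+K}) = c + \br{\ol\Delta(\tau_K) \setminus \Delta(\tau_K)}$ while $c + K$ is again a canonical ideal by Theorem~\ref{018}. By Theorem~\ref{018} every canonical ideal of $S$ is a translate of $K_S^0$, so I may assume $K = K_S^0$. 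I would then establish the conductor identity $\tau_{K_S^0} = \tau_S$, i.e.\ $\gamma_{K_S^0} = \gamma_S$. For ``$\leq$'': if $\alpha \geq \gamma_S$ then every entry of $\tau_S - \alpha$ is $\leq -1$, so no element of $S \subset \NN^I$ lies in any $\Delta_i(\tau_S - \alpha)$; hence $\gamma_S + \ol S \subset K_S^0$ and $\gamma_{K_S^0} \leq \gamma_S$ by Remark~\ref{053}.\ref{053b}. For ``$\geq$'': if $\gamma_{K_S^0} \neq \gamma_S$, then $\gamma_{K_S^0} \leq \gamma_S - \ee_i$ for some $i$, so $\gamma_S - \ee_i \in \gamma_{K_S^0} + \ol S \subset K_S^0$ by Remark~\ref{053}.\ref{053b}, i.e.\ $\Delta^S(-\one + \ee_i) = \emptyset$; but $\zero \in S$ satisfies $\zero_i = 0 = (-\one + \ee_i)_i$ and $\zero_k = 0 > -1 = (-\one + \ee_i)_k$ for all $k \neq i$, whence $\zero \in \Delta_i^S(-\one + \ee_i)$, a contradiction. (Locality is not yet needed here.)

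For the main step, let $\beta \in \ol\Delta(\tau_K) \setminus \Delta(\tau_K)$ and set $J = \cbr{i \in I \mid \beta_i = (\tau_K)_i}$. Unwinding Definition~\ref{039}, membership of $\beta$ in $\ol\Delta(\tau_K)$ gives $\beta \geq \tau_K$ and $J \neq \emptyset$, while $\beta \notin \Delta(\tau_K)$ forces, for every $i \in J$, some $k \neq i$ with $\beta_k \leq (\tau_K)_k$, hence $\beta_k = (\tau_K)_k$, i.e.\ $k \in J$; therefore $|J| \geq 2$. Using $\tau_K = \tau_S$, it remains to show $\Delta^S(\tau_S - \beta) = \emptyset$, for then $\beta \in K_S^0 = K$. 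The entries of $\tau_S - \beta$ equal $0$ on $J$ and are $\leq -1$ off $J$. For $i \notin J$ every element of $\Delta_i(\tau_S - \beta)$ has negative $i$-th coordinate, so $\Delta_i^S(\tau_S - \beta) = \emptyset$. For $i \in J$, any $\sigma \in \Delta_i^S(\tau_S - \beta)$ would satisfy $\sigma_i = 0$ and $\sigma_k > (\tau_S - \beta)_k = 0$ for every $k \in J \setminus \cbr{i}$; since $|J| \geq 2$ this index set is non-empty, so $\sigma \neq \zero$ has a vanishing coordinate, contradicting the locality of $S$. Hence $\Delta^S(\tau_S - \beta) = \emptyset$, which finishes the argument.

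The only slightly delicate ingredient is the conductor identity $\tau_{K_S^0} = \tau_S$; everything else is a direct unwinding of Definitions~\ref{039} and~\ref{017}, and the hypothesis that $S$ is local is used exactly to rule out non-zero elements of $S$ with a zero coordinate.
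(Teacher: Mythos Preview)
Your proof is correct and follows essentially the same route as the paper: reduce to $K = K_S^0$ via Theorem~\ref{018}, use $\tau_{K_S^0} = \tau_S$, observe that any $\beta \in \ol\Delta(\tau_S) \setminus \Delta(\tau_S)$ has at least two coordinates equal to those of $\tau_S$, and conclude $\Delta^S(\tau_S - \beta) = \emptyset$ by splitting into the negative-coordinate case and the locality case. The only difference is presentational: you spell out the translation invariance and supply a direct verification of $\gamma_{K_S^0} = \gamma_S$, whereas the paper takes this identity for granted after normalizing.
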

\begin{proof}
By Theorem~\ref{018}.\ref{018b} we may assume that $K = K_S^0$, and hence $\tau_K = \tau_S$.

First note that $\ol \Delta \br{\tau_K} = \Delta \br{\tau_K}$ if $\vbr{I} = 1$. So we consider the case $\vbr{I} \geq 2$. Let $i \in I$, and let
\[
\alpha \in \ol{\Delta}_i^{K_S^0} \br{\tau_{K_S^0}} \setminus \Delta_i^{K_S^0} \br{\tau_{K_S^0}}.
\]
Then there is a $j \in I \setminus \cbr{i}$ such that
\begin{align*}
\alpha_i &= \br{\tau_S}_i, \\
\alpha_j &= \br{\tau_S}_j, \\
\alpha_k &\geq \br{\tau_S}_k \text{ for all } k \in I \setminus \cbr{i,j}
\end{align*}
(see Definition~\ref{039}). Hence,
\begin{align*}
\br{\tau_S - \alpha}_i &= 0, \\
\br{\tau_S - \alpha}_j &= 0, \\
\br{\tau_S - \alpha}_k &\leq 0 \text{ for all } k \in I \setminus \cbr{i,j}.
\end{align*}
By Remark~\ref{107} we have $\Delta_l^S \br{\tau_S - \alpha} = \emptyset$ for all $l \in I$ with $\br{\tau_S - \alpha}_l = 0$ as $S$ is local. Moreover, if $\br{\tau_S - \alpha}_l < 0$, then $\Delta_l^S \br{\tau_S - \alpha} = \emptyset$ since $S \geq \zero$. Thus, $\Delta^S \br{\tau_S - \alpha} = \emptyset$, and therefore $\alpha \in K_S^0$.
\end{proof}


\begin{lemma}
\label{021}
Let $S$ be a good semigroup, and let $E \in \GSI_S$. Then
\[
K_S^0 - E = \cbr{\alpha \in \ZZ^I \mid \Delta^E \br{\tau_S - \alpha} = \emptyset}.
\]
\end{lemma}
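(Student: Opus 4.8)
The plan is to unwind both sides to explicit conditions and match them. First I would rewrite the left-hand side: by Definition~\ref{054}, $\alpha \in K_S^0 - E$ means $\alpha + E \subset K_S^0$, which by Definition~\ref{017} is equivalent to $\Delta^S \br{\tau_S - \alpha - \beta} = \emptyset$ for every $\beta \in E$. Abbreviating $\delta = \tau_S - \alpha$, the claim thus reduces to the equivalence
\[
\br{\forall \beta \in E : \Delta^S \br{\delta - \beta} = \emptyset} \iff \Delta^E \br{\delta} = \emptyset.
\]

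For the implication from right to left I would argue by contraposition. Suppose $\Delta^S \br{\delta - \beta} \ne \emptyset$ for some $\beta \in E$, say $s \in \Delta_i^S \br{\delta - \beta}$, so that $s \in S$, $s_i = \delta_i - \beta_i$, and $s_k > \delta_k - \beta_k$ for all $k \ne i$. Since $E$ is a semigroup ideal, $\eta := \beta + s \in E$, and a componentwise check gives $\eta_i = \delta_i$ and $\eta_k > \delta_k$ for $k \ne i$; hence $\eta \in \Delta_i \br{\delta} \cap E \subset \Delta^E \br{\delta}$, contradicting $\Delta^E \br{\delta} = \emptyset$. For the reverse implication, again by contraposition: if $\eta \in \Delta_i^E \br{\delta}$ for some $i \in I$, then $\eta \in E$, $\eta_i = \delta_i$, and $\eta_k > \delta_k$ for $k \ne i$. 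Taking $\beta := \eta \in E$ and using $\zero \in S$, the element $\zero$ lies in $\Delta_i \br{\delta - \beta}$ because $\br{\delta - \beta}_i = \delta_i - \eta_i = 0$ and $\br{\delta - \beta}_k = \delta_k - \eta_k < 0$ for $k \ne i$; hence $\Delta^S \br{\delta - \eta} \ne \emptyset$ with $\eta \in E$, contradicting the left-hand condition.

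The argument is short and essentially bookkeeping; the only thing to be careful about is keeping the strict and non-strict componentwise inequalities straight through the translations $\delta = \tau_S - \alpha$ and $\delta - \beta$. The sole structural inputs are the semigroup-ideal property $E + S \subset E$ and $\zero \in S$, and I do not expect a genuine obstacle: the \enquote{harder} direction rests on the trivial decomposition $\eta = \eta + \zero$ with $\eta \in E$ and $\zero \in S$. (Note that good semigroup ideal structure of $E$ is not needed beyond $E$ being a semigroup ideal.)
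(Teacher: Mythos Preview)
Your argument is correct. Both implications are handled cleanly: for the first you use the semigroup-ideal property $E + S \subset E$ to push a witness $s \in \Delta_i^S(\delta - \beta)$ to $\beta + s \in \Delta_i^E(\delta)$, and for the converse you exploit $\zero \in S$ to witness $\Delta_i^S(\delta - \eta)$ via the decomposition $\eta = \eta + \zero$. The componentwise bookkeeping is accurate, and the edge case $\vbr{I} = 1$ (where $\Delta_i(\alpha) = \cbr{\alpha}$) is also covered by the same reasoning. Your closing observation that only the semigroup-ideal condition $E + S \subset E$ is used, not properties \ref{E1} or \ref{E2}, is valid.

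The paper itself does not give a proof but defers to \cite[Computation~3.3]{comm.algebra.25.2939}; your direct argument is the natural unwinding of the definitions and is almost certainly what that reference contains. The only minor difference worth noting is that you present it as a logical equivalence after the substitution $\delta = \tau_S - \alpha$, which makes the symmetry of the two directions transparent.
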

\begin{proof}
See \cite[Computation~3.3]{comm.algebra.25.2939}.
\end{proof}


\section{Gorenstein Endomorphism Rings}
\label{111}


In this section, we proof the following characterization of the Gorensteinness of the endomorphism ring $\End \br{\fracid I}$, where $\fracid I$ is a regular fractional ideal of an admissible ring.


\begin{theorem}
\label{001}
Let $R$ be an admissible ring, and let $\fracid{I}$ be a fractional ideal of $R$. Then the following are equivalent.
\begin{equivalence}
\item\label{001a}
$\fracid{I} : \fracid{I}$ is Gorenstein.
\item\label{001b}
$\Gamma_\fracid{I} - \Gamma_\fracid{I} = \Gamma_{\fracid{I}:\fracid{I}}$ and $\Gamma_\fracid{I} - \Gamma_\fracid{I}$ is a symmetric semigroup.
\item\label{001c}
$\fracid{I}$ is stable and $\Gamma_\fracid{I} - \Gamma_\fracid{I}$ is a symmetric semigroup.
\item\label{001d}
$\fracid I$ is stable and a canonical ideal for $\fracid I : \fracid I$.
\item\label{001e}
$\fracid I$ is stable and \emph{self-dual}, i.e.\ for any canonical ideal $\fracid K$ of $R$ there is an $x \in Q_R^\reg$ such that $\fracid I = x \br{\fracid K : \fracid I}$.
\end{equivalence}
\end{theorem}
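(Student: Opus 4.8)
The strategy is to prove the cycle of implications $\ref{001a} \Rightarrow \ref{001b} \Rightarrow \ref{001c} \Rightarrow \ref{001d} \Rightarrow \ref{001e} \Rightarrow \ref{001a}$, using the combinatorial machinery of Sections \ref{109}--\ref{110} to handle most of the work at the level of good semigroups, and then transferring back via Theorem~\ref{016}, Proposition~\ref{020}, and Proposition~\ref{011}. Throughout, write $A = \fracid I : \fracid I = \End(\fracid I)$, which is an admissible ring by Proposition~\ref{086} and Proposition~\ref{029}, with $\VR_A = \VR_R$, $\nu_A = \nu_R$, and $\Gamma_A = \Gamma_{\fracid I : \fracid I}$ as a regular fractional ideal of $R$.

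\textbf{The implications among the semigroup/stability conditions.} For \ref{001a}$\Rightarrow$\ref{001b}: if $A$ is Gorenstein, then by Proposition~\ref{020} the good semigroup $\Gamma_A = \Gamma_{\fracid I : \fracid I}$ is symmetric, and the hard part is the equality $\Gamma_\fracid I - \Gamma_\fracid I = \Gamma_{\fracid I : \fracid I}$. The inclusion $\supseteq$ is always automatic from Remark~\ref{022}.\ref{022c} together with $\Gamma_{\fracid I : \fracid I} = \Gamma_A$. For the reverse inclusion I expect one must use that $\fracid I$ is, over $A$, a module whose endomorphism ring is $A$ itself ($A$ being integrally closed among $\fracid I : \fracid I$-type extensions trivially, since $\fracid I : \fracid I : \fracid I = \fracid I : \fracid I$), so $\fracid I$ is a \emph{regular} fractional ideal of the Gorenstein ring $A$; then $\Gamma_\fracid I - \Gamma_\fracid I = \Gamma_A$ should follow from the fact that for a Gorenstein (hence "canonical-admitting") ring the difference of a value semigroup ideal with itself is controlled by the canonical duality. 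Concretely, if $\fracid K_A$ is a canonical ideal of $A$, then $\Gamma_{\fracid I : \fracid I} = \Gamma_{\fracid I : \fracid I} = \Gamma_{\fracid K_A : (\fracid K_A : \fracid I)}$-type reasoning via Theorem~\ref{016}.\ref{016b} gives $\Gamma_{\fracid I}-\Gamma_{\fracid I}=\Gamma_{\fracid I : \fracid I}$ once we know $A$ is Gorenstein so that $A$ itself is canonical. This is the step I expect to be the main obstacle; it is essentially a "value-semigroup version of reflexivity over a Gorenstein ring" and may require invoking Theorem~\ref{016} with $\fracid K = A$ and the $R' := A$ replacement, noting $\Gamma_{A : \fracid I} = \Gamma_A - \Gamma_\fracid I$ and $\Gamma_{A : (A : \fracid I)} = \Gamma_A - (\Gamma_A - \Gamma_\fracid I) = \Gamma_\fracid I$, hence dualizing twice and taking differences yields the claim. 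Then \ref{001b}$\Rightarrow$\ref{001c} is immediate from Proposition~\ref{011} (the condition $\Gamma_{\fracid I : \fracid I} = \Gamma_\fracid I - \Gamma_\fracid I$ plus stability of $\Gamma_\fracid I$; note $\Gamma_\fracid I$ stable follows because a symmetric $\Gamma_\fracid I - \Gamma_\fracid I$ equalling $\Gamma_{\fracid I:\fracid I}$ forces, via Lemma~\ref{004} and Definition~\ref{010}, that $\mu_{\Gamma_\fracid I} + (\Gamma_\fracid I - \Gamma_\fracid I) = \Gamma_\fracid I$ — one checks both inclusions using that $\Gamma_\fracid I \subset \mu_{\Gamma_\fracid I} + \ol S$ and the symmetry pins down the conductor). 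The converse direction \ref{001c}$\Rightarrow$\ref{001b} uses Proposition~\ref{011} in the other direction: $\fracid I$ stable already gives $\Gamma_{\fracid I : \fracid I} = \Gamma_\fracid I - \Gamma_\fracid I$, so \ref{001b} and \ref{001c} are in fact equivalent by Proposition~\ref{011} once we have the symmetry clause, which both share verbatim.

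\textbf{From stability + symmetry to the canonical/self-dual characterizations.} For \ref{001c}$\Leftrightarrow$\ref{001d}: since $\fracid I$ is stable, Proposition~\ref{104} gives $x \in \fracid I$ with $x(\fracid I : \fracid I) = \fracid I$, i.e.\ $\fracid I = x A$, so as a fractional ideal of $A$ it is just $A$ up to the unit-multiple $x \in Q_R^\reg = Q_A^\reg$. By Proposition~\ref{034}, $\fracid I$ is a canonical ideal of $A$ iff $A$ is a canonical ideal of $A$, i.e.\ iff $A$ is Gorenstein, i.e.\ (Proposition~\ref{020}) iff $\Gamma_A$ is symmetric; and $\Gamma_A = \Gamma_{\fracid I : \fracid I} = \Gamma_\fracid I - \Gamma_\fracid I$ by stability (Proposition~\ref{011}), so this is exactly the symmetry clause of \ref{001c}. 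This also closes \ref{001d}$\Rightarrow$\ref{001a}: $\fracid I$ canonical for $A$ forces $A$ Gorenstein by definition. For \ref{001d}$\Leftrightarrow$\ref{001e}: given a canonical ideal $\fracid K$ of $R$, Lemma~\ref{035} says $\fracid K : A$ is a canonical ideal of $A$; when $\fracid I$ is stable, $\fracid K : \fracid I = \fracid K : (xA) = x^{-1}(\fracid K : A)$ by Remark~\ref{102}.\ref{102b}, so $x(\fracid K : \fracid I) = \fracid K : A$, which is canonical for $A$. By Proposition~\ref{034}, $\fracid I$ is canonical for $A$ iff $\fracid I = y \cdot (\fracid K : A)$ for some $y \in Q_A^\reg = Q_R^\reg$; substituting $\fracid K : A = x(\fracid K : \fracid I)$ gives $\fracid I = (yx)(\fracid K : \fracid I)$, which is the self-duality in \ref{001e}. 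Conversely self-duality $\fracid I = x(\fracid K : \fracid I)$ combined with stability yields $\fracid I = x(\fracid K : \fracid I) = x x'^{-1}(\fracid K : A)$ (writing $\fracid I = x' A$), a unit multiple of a canonical ideal of $A$, hence canonical for $A$ by Proposition~\ref{034}. The only subtlety is checking $Q_A = Q_R$ and $Q_A^\reg = Q_R^\reg$, which follows since $A$ is integral over $R$ inside $Q_R$ (Proposition~\ref{086}), so $Q_A = Q_R$. I expect the routine but necessary bookkeeping to be: matching the "canonical ideal of $A$" notion between the module-theoretic one (via Lemma~\ref{035}) and the intrinsic one (Definition~\ref{015a} applied to $A$), and verifying the unit-multiple class is preserved under the substitutions; none of this is hard, but it must be done carefully because "canonical ideal" is a property relative to a ring and we are changing the ring from $R$ to $A$ mid-proof.
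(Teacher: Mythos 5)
Your handling of \ref{001c}$\Leftrightarrow$\ref{001d}$\Leftrightarrow$\ref{001e} and of \ref{001d}$\Rightarrow$\ref{001a} (via Propositions~\ref{104}, \ref{011}, \ref{020}, \ref{034} and Lemma~\ref{035}) is correct and is essentially the paper's argument. The genuine gap is the step you yourself flag as the main obstacle, namely the inclusion $\Gamma_{\fracid I} - \Gamma_{\fracid I} \subset \Gamma_{\fracid I : \fracid I}$ in \ref{001a}$\Rightarrow$\ref{001b}. The mechanism you propose --- set $A = \fracid I : \fracid I$, use that $A$ is a canonical ideal of itself, and \enquote{dualize twice and take differences} --- only uses facts that hold for an arbitrary good semigroup ideal $E$ of an arbitrary symmetric good semigroup $T$, namely $T - \br{T - E} = E$ and $\Gamma_{A : \fracid I} = \Gamma_A - \Gamma_{\fracid I}$ (Theorem~\ref{016}). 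From these one only gets $E - E = T - \br{\br{T - E} + E} \supset T - T = T$, i.e.\ the trivial inclusion, and no argument using only such facts can yield $E - E \subset T$, because that statement is false in this generality: for $T = \abr{2,3}$ and $E = M_T = T \setminus \cbr{0}$ one has $E \in \GSI_T$ and $T$ symmetric, yet $E - E = \NN \not\subset T$. (Your intermediate identity $\fracid I : \fracid I = \fracid K_A : \br{\fracid K_A : \fracid I}$ is also wrong as written; the right-hand side equals $\fracid I$.) What is missing is an input tying $\Gamma_{\fracid I} - \Gamma_{\fracid I}$ to $\Gamma_{\fracid I : \fracid I}$ beyond the trivial containment of Remark~\ref{022}.\ref{022c}; the paper supplies exactly this in Lemma~\ref{013}, a conductor-normalized $\Delta$-argument (using Proposition~\ref{024}, Lemma~\ref{021}, and Lemma~\ref{004}) showing $\Gamma_{\fracid I} - \Gamma_{\fracid I} \subset K^0_{\Gamma_{\fracid I : \fracid I}}$, after which Gorensteinness of $\fracid I : \fracid I$ collapses $K^0_{\Gamma_{\fracid I : \fracid I}}$ onto $\Gamma_{\fracid I : \fracid I}$ (Lemma~\ref{014}). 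Nothing playing this role appears in your proposal.

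A second, smaller gap is in \ref{001b}$\Rightarrow$\ref{001c}: before Proposition~\ref{011} can be applied you need that symmetry of $\Gamma_{\fracid I} - \Gamma_{\fracid I}$ forces $\Gamma_{\fracid I}$ to be stable, i.e.\ the nontrivial inclusion $\Gamma_{\fracid I} \subset \mu_{\Gamma_{\fracid I}} + \br{\Gamma_{\fracid I} - \Gamma_{\fracid I}}$. Your parenthetical hint ($\Gamma_{\fracid I} \subset \mu_{\Gamma_{\fracid I}} + \ol{\Gamma_R}$ and \enquote{the symmetry pins down the conductor}) only accounts for elements above the conductor; for the remaining elements one needs the duality argument of Proposition~\ref{012}, which the paper proves via Lemma~\ref{002}: after translating to $\gamma_E = \gamma_S$ one has $E - E = K_S^0 - E$, hence $E = K_S^0 - \br{K_S^0 - E} = K_S^0 - \br{E - E} = \mu_E + K^0_{E-E} = \mu_E + \br{E - E}$. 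Citing Proposition~\ref{012} (as the paper does) closes this step; as a self-contained argument your sketch does not.
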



The implication of stability from Gorensteinness relates Theorem~\ref{001} to the theory of almost Gorenstein rings. As shown in \cite{j.alg.379.355}, for a local one-dimensional Cohen--Macaulay ring $R$, the endomorphism ring $\fracid m : \fracid m$ of the maximal ideal $\fracid m$ is Gorenstein if and only if $R$ is almost Gorenstein of maximal embedding dimension. However, $R$ having maximal embedding dimension is equivalent to $\fracid m$ being stable, see \cite[Corollary~1.10]{amer.j.math.93.649}.


Similarly, considering again the maximal ideal of a local admissible ring of maximal embedding dimension, Theorem~\ref{001} is related to the result of a recent study of \emph{almost symmetric good semigroups} (see \cite[Definitions on page 228]{j.pure.app.alg.147.215}): In \cite[Corollary~4.2]{ric.mat.23} the authors find that a local admissible ring $R$ is almost symmetric if and only if $\Gamma_{\fracid m : \fracid m} = \Gamma_{\fracid m} - \Gamma_{\fracid m}$ and $\Gamma_{\fracid m}$ is a canonical ideal of $\Gamma_{\fracid m} - \Gamma_{\fracid m}$. In the case of maximal embedding dimension, this coincides again with the assertions of Theorem~\ref{001} applied to $\fracid I = \fracid m$.


From Section~\ref{109} we know that the difference $E - E$ is not necessarily good even if $E$ is the value semigroup ideal of a fractional ideal. In particular, differences are in general not compatible with quotients. In Section~\ref{110}, however, we learned that differences behave well in the context of dualizing, i.e.\ if canonical ideals are involved. Nonetheless, the following example shows that in Theorem~\ref{001}.\ref{001b}, only requiring symmetry of $\Gamma_\fracid{I} - \Gamma_\fracid{I}$ would not be sufficient.


\begin{figure}
\begin{center}
\begin{tikzpicture}[scale=0.5,inner sep=2.5]
\draw[->] (0,0) -- (15,0);
\foreach \i in {1,2,3,5,7,9} \draw (\i,0) node[shape=circle,draw,fill=white] {};

\foreach \i in {0,4,6,8,10,11,...,14} \draw (\i,0) node[shape=circle,draw,fill] {};

\draw (6.5,0.5) node[above] {$\Gamma_R$};
\end{tikzpicture}\\\medskip
\begin{tikzpicture}[scale=0.5,inner sep=2.5]
\draw[->] (0,0) -- (15,0);
\foreach \i in {1,3,5} \draw (\i,0) node[shape=circle,draw,fill=white] {};

\foreach \i in {0,2,4,6,7,...,14} \draw (\i,0) node[shape=circle,draw,fill] {};

\draw (6.5,0.5) node[above] {$\Gamma_{\fracid{m}_R} - \Gamma_{\fracid{m}_R}$};
\end{tikzpicture}\\\medskip
\begin{tikzpicture}[scale=0.5,inner sep=2.5]
\draw[->] (0,0) -- (15,0);
\foreach \i in {1,2,3,5} \draw (\i,0) node[shape=circle,draw,fill=white] {};

\foreach \i in {0,4,6,7,...,14} \draw (\i,0) node[shape=circle,draw,fill] {};

\draw (6.5,0.5) node[above] {$\Gamma_{\fracid{m}_R : \fracid{m}_R}$};
\end{tikzpicture}
\end{center}
\caption{The local algebroid curve $R = \CC \ssbr{t^4, t^6 + t^7, t^{11}}$ of Example~\ref{093} has semigroup of values $\Gamma_R = \abr{4,6,11,13}$ with maximal ideal $\Gamma_{\fracid{m}_R} = M_{\Gamma_R} = \Gamma_R \setminus \cbr{0}$. Then $\Gamma_{\fracid{m}_R} - \Gamma_{\fracid{m}_R} = \abr{2,7}$ and $\Gamma_{\fracid{m}_R : \fracid{m}_R} = \abr{4,6,7,9} \subsetneq \Gamma_{\fracid{m}_R} - \Gamma_{\fracid{m}_R}$.
\label{094}}
\end{figure}


\begin{example}
\label{093}
Consider the local algebroid curve $R = \CC \ssbr{t^4, t^6 + t^7, t^{11}}$ with maximal ideal $\fracid{m}_R = \abr{t^4, t^6 + t^7, t^{11}}$. Then $\Gamma_R = \abr{4,6,11,13}$ and $\Gamma_{\fracid{m}_R} - \Gamma_{\fracid{m}_R} = \abr{2,7}$ are good semigroups (cf.\ Remark~\ref{032}.\ref{032b}). Assume that $2 \in \Gamma_{\fracid{m}_R : \fracid{m}_R}$. Then there is by Proposition~\ref{075}.\ref{075c} a unique $a \in \br{\CC \ssbr{t}}^\ast$ such that $a t^2 \in \fracid{m}_R : \fracid{m}_R$. Therefore, there is a $b \in \CC \setminus \cbr{0}$ such that $a t^2 t^4 = b \br{t^6 + t^7}$. Hence, $a = b \br{1 + t}$. However, $b \br{t^2 + t^3} \br{t^6 + t^7} = b \br{t^8 + 2 t^9 + t^{10}}$ is not contained in $\fracid{m}_R$. This implies $2 \not\in \Gamma_{\fracid{m}_R : \fracid{m}_R}$, and hence $\Gamma_{\fracid{m}_R : \fracid{m}_R} \subsetneq \Gamma_{\fracid{m}_R} - \Gamma_{\fracid{m}_R}$. See Figure~\ref{094}.
\end{example}


In the following, we collect some preliminary results for the proof of Theorem~\ref{001}. With a view towards almost symmetric good semigroups, a similar result was recently found in \cite[Theorem~3.4]{ric.mat.23}.


\begin{lemma}
\label{002}
Let $S$ be a good semigroup, and let $E$ be a good semigroup ideal of $S$ with $\gamma_E = \gamma_S$. If $E - E$ is a symmetric good semigroup, then $E - E = K_S^0 - E$.
\end{lemma}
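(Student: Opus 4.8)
The plan is to write $T := E - E$ and $F := K_S^0 - E$, note that both are semigroup ideals of $S$, that $T$ is a good semigroup by hypothesis and that $F \in \GSI_S$ by Theorem~\ref{018}.\ref{018e}, and then prove the asserted equality $F = T$ by establishing the two inclusions separately. The inclusion $F \subseteq T$ will be the crux, and I expect to get it from the maximality property of canonical semigroup ideals (Proposition~\ref{024}) applied \emph{inside} $T$, using that $T$ is symmetric.

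First I would do the conductor bookkeeping. By Lemma~\ref{004}, $\gamma_F = \gamma_{K_S^0} - \mu_E$ and $\gamma_T = \gamma_E - \mu_E$. Since $\gamma_{K_S^0} = \gamma_S$ (equivalently $\tau_{K_S^0} = \tau_S$, as already used in the proof of Lemma~\ref{040}; it also drops out of Lemma~\ref{021} with $E = \ol S$) and $\gamma_E = \gamma_S$ by hypothesis, this gives $\gamma_F = \gamma_T$. For the inclusion $T \subseteq F$ I would then use that $\gamma_E = \gamma_S = \gamma_{K_S^0}$, so Proposition~\ref{024} yields $E \subseteq K_S^0$; hence for any $\alpha \in T = E - E$ we have $\alpha + E \subseteq E \subseteq K_S^0$, i.e.\ $\alpha \in K_S^0 - E = F$.

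For the reverse inclusion $F \subseteq T$, the first step is to check that $F$ is a good semigroup ideal of $T$, not merely of $S$. It lies in $\GSI_S$, so it satisfies \ref{E1} and \ref{E2} (these are intrinsic properties of the subset $F$ of $\ZZ^I$); since $S \subseteq T$ (because $E + S \subseteq E$), the condition ``$\alpha_0 + F \subseteq T$ for some $\alpha_0 \in T$'' is inherited from $F$ being a semigroup ideal of $S$, and it remains to see that $T + F \subseteq F$. This I would verify as follows: for $u \in T = E-E$, from $u + E \subseteq E$ one gets $E \subseteq -u + E$, so by Remark~\ref{003}.\ref{003b} and Remark~\ref{003}.\ref{003c}
\[
u + F = u + \br{K_S^0 - E} = K_S^0 - \br{-u + E} \subseteq K_S^0 - E = F .
\]
Now $T$ is symmetric, hence a canonical ideal of itself by Definition~\ref{019}.\ref{019b}; applying Proposition~\ref{024} to the good semigroup $T$, its canonical ideal $T$, and the good semigroup ideal $F \in \GSI_T$ with $\gamma_F = \gamma_T$, I get $F \subseteq T$. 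Together with $T \subseteq F$ this gives $F = T$, i.e.\ $K_S^0 - E = E - E$.

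The step I expect to be the main obstacle is resisting the wrong approach rather than the computation itself. The instinctive route is to argue that $E$ is a canonical ideal of $E-E$, equivalently that $K_S^0 - E = \mu_E + (E-E)$; but each of these is in fact \emph{equivalent to stability} of $E$, which is not among the hypotheses (only $\gamma_E = \gamma_S$ and symmetry of $E-E$ are assumed), and indeed $E - E$ need not equal $\mu_E + (E-E)$. The resolution is precisely the reorganization above: one never translates $E-E$ back onto $E$, but instead observes that $F = K_S^0 - E$ is a good semigroup ideal of the symmetric semigroup $T = E-E$ with the same conductor as $T$, and lets the characterization of canonical semigroup ideals operate one level down, inside $T$.
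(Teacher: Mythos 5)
Your proof is correct, and for the substantive inclusion it takes a genuinely different route from the paper. The easy inclusion $E-E\subset K_S^0-E$ is handled exactly as in the paper (Proposition~\ref{024} gives $E\subset K_S^0$ from $\gamma_E=\gamma_S=\gamma_{K_S^0}$, then monotonicity of differences). For the reverse inclusion the paper argues pointwise with the $\Delta$-machinery: for $\alpha\notin E-E$, symmetry in the explicit form $E-E=K_{E-E}^0$ produces a $\delta\in\Delta^{E-E}\br{\tau_{E-E}-\alpha}$, which after translation by $\mu_E$ (using Lemma~\ref{004}) lands in $\Delta^E\br{\tau_S-\alpha}$, so Lemma~\ref{021} excludes $\alpha$ from $K_S^0-E$. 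You instead use symmetry only through the abstract maximality characterization: you check that $F=K_S^0-E$ is a good semigroup ideal of $T=E-E$ (properties \ref{E1}, \ref{E2} are intrinsic and come from Theorem~\ref{018}, the absorption $T+F\subset F$ from your translation computation, and the second semigroup-ideal condition from $S\subset T$), that $\gamma_F=\gamma_T$ via Lemma~\ref{004} together with $\gamma_{K_S^0}=\gamma_S$, and then apply Proposition~\ref{024} inside the symmetric semigroup $T$. Your route buys a cleaner, more structural argument that avoids any explicit $\Delta$-computation; its price is the extra verification that $F\in\GSI_T$ and the fact $\gamma_{K_S^0}=\gamma_S$, which the paper never needs to state (though it uses it implicitly, e.g.\ in the proof of Lemma~\ref{040}, and your derivation of it from Lemma~\ref{021} with $E=\ol S$, which is legitimate since $\ol S\in\GSI_S$, is fine). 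Your closing caution is also well placed: deducing stability of $E$ or the identity $K_S^0-E=\mu_E+\br{E-E}$ at this stage would implicitly invoke Proposition~\ref{012}, which in the paper is proved \emph{from} this lemma, so avoiding that shortcut keeps the logical order intact.
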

\begin{proof}
Since $\gamma_E = \gamma_S = \gamma_{K_S^0}$, we have $E \subset K_S^0$ by Proposition~\ref{024}, and hence $E - E \subset K_S^0 - E$ by Remark~\ref{003}.\ref{003c}.

Let now $\alpha \in \ZZ^I \setminus \br{E - E}$. Since $E - E$ is a symmetric good semigroup, we have
\[
E - E = K_{E-E}^0 = \cbr{\beta \in \ZZ^I \mid \Delta^{E-E} \br{\tau_{E-E} - \beta} = \emptyset}
\]
by Theorem~\ref{018}.\ref{018d}. Hence, there is a $\delta \in \Delta^{E-E} \br{\tau_{E-E} - \alpha}$. This implies
\[
\delta + \mu_E \in \Delta \br{\tau_{E-E} + \mu_E - \alpha} \cap \br{\br{E - E} + E} \subset \Delta \br{\tau_E - \alpha} \cap E = \Delta^E \br{\tau_S - \alpha}
\]
by Lemma~\ref{004}. Therefore, $\alpha \not\in K_S^0 - E$ by Lemma~\ref{021}. This yields $K_S^0 - E \subset E - E$. Hence, $E - E = K_S^0 - E$.
\end{proof}


With Lemma~\ref{002}, we can prove the combinatorial version of Theorem~\ref{001}. A similar result was recently found in \cite[Theorem~3.4]{ric.mat.23} with a view towards almost symmetric good semigroups: Considering the maximal ideal of a local good semigroup of maximal embedding dimension, \cite[Corollary~3.6]{ric.mat.23} coincides with the following.


\begin{proposition}
\label{012}
Let $S$ be a good semigroup, and let $E \in \GSI_S$. Then the following are equivalent:
\begin{equivalence}
\item\label{012a}
$E-E$ is a symmetric good semigroup.
\item\label{012b}
$E$ is stable and a canonical ideal for $E - E$.
\item\label{012c}
$E$ is stable and self-dual, i.e.\ for any canonical ideal $K$ of $S$ we have $\gamma_K - \gamma_E - \mu_E + E = K - E$.
\end{equivalence}
\end{proposition}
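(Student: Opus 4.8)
The plan is to prove the cycle \ref{012a}$\Rightarrow$\ref{012b}$\Rightarrow$\ref{012c}$\Rightarrow$\ref{012a}, after a normalization. All three statements are unaffected by replacing $E$ with a translate $\alpha+E$ — indeed $(\alpha+E)-(\alpha+E)=E-E$, stability is translation-invariant (Definition~\ref{010}), a translate of a canonical ideal is canonical (Theorem~\ref{018}.\ref{018b}), and in the identity of~\ref{012c} every term merely picks up the shift via $\gamma_{\alpha+E}=\alpha+\gamma_E$, $\mu_{\alpha+E}=\alpha+\mu_E$ and $K-(\alpha+E)=-\alpha+(K-E)$ (Remark~\ref{003}.\ref{003b}) — so by Remark~\ref{003}.\ref{003e} I would first assume $\gamma_E=\gamma_S$. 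Write $S'=E-E$; this is always a semigroup ideal of $S$ with $S\subseteq S'\subseteq\ol S$ (the first inclusion from $E+S\subseteq E$, the second because $\alpha\in S'$ forces $\alpha+\mu_E\in E$, hence $\alpha\geq\zero$), and as soon as $E$ is stable it coincides with $-\mu_E+E$, so it is a good semigroup lying in $\GSI_S$.

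For \ref{012a}$\Rightarrow$\ref{012b}: suppose $S'=E-E$ is a symmetric good semigroup. Since $\gamma_E=\gamma_S$, Lemma~\ref{002} gives $S'=K_S^0-E$; as $K_S^0$ is a canonical ideal of $S$ (Definition~\ref{017}, Theorem~\ref{018}), Theorem~\ref{018}.\ref{018c} turns this into $E=K_S^0-(K_S^0-E)=K_S^0-S'$. Next compute $K_S^0-S'$ via Lemma~\ref{021}: it is $\cbr{\alpha\mid\Delta^{S'}(\tau_S-\alpha)=\emptyset}$, and the substitution $\beta=\alpha-(\gamma_S-\gamma_{S'})$, under which $\tau_S-\alpha=\tau_{S'}-\beta$, identifies it with $(\gamma_S-\gamma_{S'})+K_{S'}^0$. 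Since $S'$ is symmetric, $K_{S'}^0=S'$ (Theorem~\ref{018}.\ref{018d}), so $E=(\gamma_S-\gamma_{S'})+S'$. Comparing minimal elements yields $\mu_E=\gamma_S-\gamma_{S'}$, hence $E=\mu_E+(E-E)$ — so $E$ is stable — and $E=\mu_E+K_{S'}^0$ is a canonical ideal of $S'=E-E$ by Theorem~\ref{018}.\ref{018b}.

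For \ref{012b}$\Rightarrow$\ref{012c}$\Rightarrow$\ref{012a}: here $E$ is stable throughout, so $E=\mu_E+S'$ with $\mu_{S'}=\zero$, $\gamma_E=\mu_E+\gamma_{S'}$, and $S'\in\GSI_S$. I would record once that, since every canonical ideal of $S$ is a translate of $K_S^0$ (Theorem~\ref{018}.\ref{018b}) and $K_S^0-S'=(\gamma_S-\gamma_{S'})+K_{S'}^0$ by the computation above, for any canonical ideal $K$ of $S$ the difference $K-S'$ is a translate of $K_{S'}^0$, hence a canonical ideal of $S'$, with conductor $\gamma_{K-S'}=\gamma_K$ (Lemma~\ref{004}). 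Now under~\ref{012b}, $E=\mu_E+S'$ is a canonical ideal of $S'$, so $S'$ is a canonical ideal of $S'$ (Theorem~\ref{018}.\ref{018b}), i.e.\ symmetric with $S'=K_{S'}^0$ (Theorem~\ref{018}.\ref{018d}); then $K-S'=(\gamma_K-\gamma_{S'})+S'$, and combining this with $K-E=-\mu_E+(K-S')$ (Remark~\ref{003}.\ref{003b}) and $\gamma_K-\gamma_E-\mu_E+E=(\gamma_K-\gamma_{S'}-\mu_E)+S'$ shows both sides of the identity in~\ref{012c} equal $(\gamma_K-\gamma_{S'}-\mu_E)+S'$, which is~\ref{012c}. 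Conversely, under~\ref{012c}, apply the identity with $K=K_S^0$ (recalling $\gamma_{K_S^0}=\gamma_S$, cf.\ the proof of Lemma~\ref{002}): its left side is $(\gamma_S-\gamma_{S'}-\mu_E)+S'$ and its right side is $K_S^0-E=-\mu_E+(K_S^0-S')=(\gamma_S-\gamma_{S'}-\mu_E)+K_{S'}^0$, forcing $S'=K_{S'}^0$; hence $S'$ is a canonical ideal of itself, i.e.\ $E-E$ is symmetric — this is~\ref{012a}.

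The hardest step will be \ref{012a}$\Rightarrow$\ref{012b}, i.e.\ deducing \emph{stability} of $E$ from symmetry of $E-E$; the rest is essentially bookkeeping with differences. Stability does not come from formal manipulation but from the combinatorial duality of Theorem~\ref{018} together with Lemma~\ref{002}: their combined effect is that symmetry of $S'=E-E$ makes the canonical ideal $K_S^0-S'$ of $S'$ a translate of $S'$, and since that translate equals $E$ it must begin at $\mu_E$, which is exactly $E=\mu_E+(E-E)$.
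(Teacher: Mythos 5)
Your proposal is correct and follows essentially the same route as the paper: the same cycle \ref{012a}$\Rightarrow$\ref{012b}$\Rightarrow$\ref{012c}$\Rightarrow$\ref{012a}, with the key step being Lemma~\ref{002} combined with the duality of Theorem~\ref{018} to exhibit $E$ as $\mu_E + (E-E)$ (hence stable), and the remaining implications handled by conductor/translation bookkeeping. The only cosmetic differences are your upfront normalization $\gamma_E = \gamma_S$ (the paper translates inside the proof of \ref{012a}$\Rightarrow$\ref{012b}) and your derivation of $K_S^0 - (E-E) = (\gamma_S - \gamma_{E-E}) + K_{E-E}^0$ directly from Lemma~\ref{021} rather than via Theorem~\ref{018} and Lemma~\ref{004}.
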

\begin{proof}\
\begin{description}[font=\normalfont]
\item[\ref{012a}$\implies$\ref{012b}]
First suppose that $\gamma_E = \gamma_S$. By Lemma~\ref{004} and Theorem~\ref{018}.\ref{018d} and \ref{018f} we have
\begin{align*}
K_{E-E}^0 &= \br{\gamma_{E-E} - \gamma_S} + \br{K_S^0 - \br{E-E}} \\
&= \br{\gamma_E - \mu_E - \gamma_S} + \br{K_S^0 - \br{E-E}} \\
&= - \mu_E + \br{K_S^0 - \br{E-E}}.
\end{align*}
Therefore, Theorem~\ref{018} and Lemma~\ref{002} yield
\[
E = K_S^0 - \br{K_S^0 - E} = K_S^0 - \br{E - E} = \mu_E + K_{E-E}^0 = \mu_E+ \br{E - E}.
\]
Thus, $E$ is stable.

Let now $\gamma_E$ be arbitrary. Then $\gamma_{\gamma_S - \gamma_E + E} = \gamma_S$, and $\br{\gamma_S - \gamma_E + E} - \br{\gamma_S - \gamma_E + E} = E - E$ is symmetric by Remark~\ref{003}.\ref{003b}. Thus, $\br{\gamma_S - \gamma_E + E}$ is stable. This implies
\begin{align*}
E - E &= \br{\gamma_S - \gamma_E + E} - \br{\gamma_S - \gamma_E + E} \\
&= - \mu_{\gamma_S - \gamma_E + E} + \br{\gamma_S - \gamma_E + E} \\
&= - \br{\gamma_S - \gamma_E + \mu_E} + \gamma_S - \gamma_E + E \\
&= - \mu_E + E.
\end{align*}
Therefore, $E$ is stable.

If $E$ is stable, then $E = \mu_E + \br{E - E}$. Since $E - E$ is symmetric, $E$ is a canonical ideal of $E - E$ by Theorem~\ref{018}.
\item[\ref{012b}$\implies$\ref{012c}]
Let $K$ be a canonical ideal of $S$. If $E$ is stable and a canonical ideal of $E-E$, then Theorem~\ref{018} implies
\begin{align*}
E &= \gamma_K - \gamma_E + \br{K - \br{E - E}} \\
&= \gamma_E - \gamma_K + \br{K - \br{-\mu_E + E}} \\
&= \gamma_E - \gamma_K + \mu_E +\br{K - E}.
\end{align*}
\item[\ref{012c}$\implies$\ref{012a}]
If $E$ is stable, then
\begin{align*}
E - E &= -\mu_E + E \\
&= -\mu_E + \gamma_E - \gamma_K + \mu_E +\br{K - E} \\
&= -\mu_E + \gamma_E - \gamma_K + \br{K - \br{-\mu_E + E}} \\
&= -\mu_E + \gamma_E - \gamma_K + \br{K - \br{E - E}}.
\end{align*}
Thus, $E - E$ is symmetric by Theorem~\ref{018}.
\qedhere
\end{description}
\end{proof}


\begin{lemma}
\label{013}
Let $R$ be an admissible ring, and let $\fracid{I} \in \RFI_R$. Then $\Gamma_\fracid{I} - \Gamma_\fracid{I} \subset K_{\Gamma_{\fracid{I} : \fracid{I}}}^0$.
\end{lemma}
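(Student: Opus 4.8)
The plan is to pass to value semigroups and reduce the assertion to a combinatorial statement about the good semigroup $T := \Gamma_{\fracid I : \fracid I}$; write $A := \fracid I : \fracid I$ (an admissible ring by Propositions~\ref{086} and~\ref{029}), $S := \Gamma_R$, $E := \Gamma_\fracid I$. First I would make two reductions. Since ideal differences, value semigroup ideals, and the canonical ideal $K^0_{(-)}$ are all compatible with the decomposition of a good semigroup into its local components (Theorem~\ref{105}, together with the fact that $\Delta^{S_1 \times S_2}(\eta) = \emptyset$ if and only if $\Delta^{S_1}(\eta_1) = \emptyset$ and $\Delta^{S_2}(\eta_2) = \emptyset$), I may assume $T$ is local; as $S \subseteq T$, this forces $S$, hence $R$, to be local as well. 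Next, $E$ has a unique minimal element $\mu_E \in E$, so I may pick a regular $x \in \fracid I$ with $\nu_R(x) = \mu_E$ and replace $\fracid I$ by $x^{-1}\fracid I$; this changes neither $\fracid I : \fracid I$ nor $\Gamma_\fracid I - \Gamma_\fracid I$, so I may assume $\mu_E = \zero$, and then $R \subseteq A \subseteq \fracid I \subseteq \ol R$ and $S \subseteq T \subseteq E \subseteq \ol S = \NN^I$.

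Second I would pin down the conductors. Because $\fracid I \subseteq \ol R$, Definition~\ref{079} gives $\fracid{C}_\fracid I = \fracid I : \ol R \subseteq \fracid I : \fracid I = A$, so by Proposition~\ref{082} we get $\gamma_E + \NN^I = \Gamma_{(Q_R)^{\gamma_E}} = \Gamma_{\fracid{C}_\fracid I} \subseteq T$ and thus $\gamma_T \le \gamma_E$; conversely $T \subseteq E - E$ by Remark~\ref{022}.\ref{022c}, and $\gamma_{E-E} = \gamma_E - \mu_E = \gamma_E$ by Lemma~\ref{004}, giving $\gamma_T \ge \gamma_E$. Hence $\gamma_T = \gamma_E$ and $\tau_T = \tau_E$. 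Now $E - E$ is a semigroup ideal of $T$ satisfying property~\ref{E1} (if $\alpha + e, \beta + e \in E$ then $\inf(\alpha,\beta)+e = \inf(\alpha+e,\beta+e) \in E$), and it contains $\gamma_E$ (since $\gamma_E + E \subseteq \gamma_E + \NN^I \subseteq E$ by Remark~\ref{053}.\ref{053b}). So $\inf(\alpha,\gamma_E) \in E - E$ for every $\alpha \in E - E$, and applying Lemma~\ref{064} to $K^0_T \in \GSI_T$ (which has conductor $\gamma_T = \gamma_E$) it suffices to prove $\alpha \in K^0_T$ for every $\alpha \in E - E$ with $\alpha \le \gamma_E$; by Definition~\ref{017} this amounts to showing $\Delta^T(\tau_E - \alpha) = \emptyset$ for all such $\alpha$.

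This last point is the heart of the proof, argued by contradiction. Suppose $\delta \in T = \Gamma_A$ lies in $\Delta_i(\tau_E - \alpha)$ for some $i$ and some $\alpha \in E - E$ with $\alpha \le \gamma_E$. Choosing a regular $a \in A$ with $\nu_R(a) = \delta$ and using that each $\nu_V$ is a homomorphism on $Q_R^\reg$, one has $\delta + \Gamma_\fracid I = \Gamma_{a\fracid I} \subseteq \Gamma_\fracid I$; together with $\alpha + \Gamma_\fracid I \subseteq \Gamma_\fracid I$ this yields $\beta := \delta + \alpha \in E$ with $\beta_i = (\tau_E)_i$ and $\beta_j \ge (\gamma_E)_j$ for $j \ne i$. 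Since $T$ is local, either $\delta \ge \one$, in which case I would show $\beta$ is itself the value of a regular element of $A$ — lifting $\beta$ to $y \in \fracid I$ and checking $y\fracid I \subseteq \fracid I$: products $yz$ with $\nu_R(z)_i \ge 1$ land in $\fracid{C}_\fracid I \subseteq \fracid I$ by the coordinate bounds on $\beta$, and the remaining products have to be controlled using that $\delta$ witnesses the gap in coordinate $i$ — or $\delta = \zero$, forcing $\alpha = \gamma_E - \ee_i$, in which case one rules out $\gamma_E - \ee_i \in \Gamma_\fracid I - \Gamma_\fracid I$ directly. In either case one produces a value in $\Delta^T(\tau_T)$, contradicting $\Delta^T(\tau_T) = \emptyset$, which holds because $T \in \GSI_T$ and $\gamma_T = \gamma_{K^0_T}$, so $T \subseteq K^0_T$ by Proposition~\ref{024} and therefore $\zero \in K^0_T$. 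I expect the delicate step to be exactly this promotion of the near-conductor value $\beta$ (equivalently, of $\gamma_E - \ee_i$) from $\Gamma_\fracid I - \Gamma_\fracid I$ to $\Gamma_{\fracid I : \fracid I}$, since the value-set inclusion $\alpha + \Gamma_\fracid I \subseteq \Gamma_\fracid I$ does not by itself yield $y\fracid I \subseteq \fracid I$ for a single lift $y$; the reduction to the local case and the tightness of the conductor of $\fracid I$ in coordinate $i$ should be what makes it go through. An alternative route is to fix a canonical ideal $\fracid K$ of $R$, use Lemma~\ref{035} and Theorem~\ref{016} to identify $K^0_T$ up to translation with $\Gamma_{\fracid K : A} = \Gamma_\fracid K - \Gamma_A$, and then exploit that the $\fracid K$-dual of $\fracid I$ has the same endomorphism ring $A$ to recast the desired inclusion as a self-duality statement for that dual.
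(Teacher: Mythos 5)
Your preliminary reductions (to the local case, to $\mu_{\Gamma_\fracid{I}} = \zero$, and the identity $\gamma_{\Gamma_{\fracid{I}:\fracid{I}}} = \gamma_{\Gamma_\fracid{I}}$) are essentially sound, but they are not where the difficulty lies, and the step you yourself flag as delicate is a genuine gap. Having produced $\delta \in \Delta_i^{\Gamma_{\fracid{I}:\fracid{I}}} \br{\tau_{\Gamma_\fracid{I}} - \alpha}$ and hence $\beta = \delta + \alpha \in \Gamma_\fracid{I}$ with $\beta \in \Delta_i \br{\tau_{\Gamma_\fracid{I}}}$, you try to conclude by \enquote{promoting} $\beta$ to the value of a regular element of $\fracid{I} : \fracid{I}$ in order to contradict $\Delta^{\Gamma_{\fracid{I}:\fracid{I}}} \br{\tau_{\Gamma_{\fracid{I}:\fracid{I}}}} = \emptyset$. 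This promotion is precisely the kind of statement that fails for value semigroups: membership of a value in $\Gamma_\fracid{I} - \Gamma_\fracid{I}$ does not provide a single lift $y$ with $y \fracid{I} \subset \fracid{I}$ (see Example~\ref{093}, where $2 \in \Gamma_{\fracid{m}_R} - \Gamma_{\fracid{m}_R}$ but $2 \notin \Gamma_{\fracid{m}_R : \fracid{m}_R}$), and your sketch of how to control the products $yz$ does not amount to an argument---you acknowledge as much. Both branches of your dichotomy ($\delta \geq \one$ and $\delta = \zero$) stall at the same unproved claim.

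The irony is that at the point where you stop you are already done, provided you derive the contradiction at the level of $E = \Gamma_\fracid{I}$ rather than of $\Gamma_{\fracid{I}:\fracid{I}}$: for any good semigroup ideal $E$ one has $\Delta^E \br{\tau_E} = \emptyset$, a purely combinatorial consequence of properties~\ref{E1} and~\ref{E2} (it is the fact invoked in the proof of Lemma~\ref{042}, citing Lemma~4.1.10 of \cite{j.commut.algebra.11.81}), and your $\beta$ lies in $\Delta_i^{E} \br{\tau_E}$. The paper's own proof packages this differently and more economically: it normalizes $\gamma_{\Gamma_\fracid{I}} = \gamma_{\Gamma_R}$ instead of $\mu_{\Gamma_\fracid{I}} = \zero$, so that $\Gamma_\fracid{I} \subset K_{\Gamma_R}^0$ by Proposition~\ref{024}, whence $\Gamma_\fracid{I} - \Gamma_\fracid{I} \subset K_{\Gamma_R}^0 - \Gamma_\fracid{I}$; Lemma~\ref{021} describes $K_{\Gamma_R}^0 - \Gamma_\fracid{I}$ as $\cbr{\alpha \mid \Delta^{\Gamma_\fracid{I}} \br{\tau_{\Gamma_R} - \alpha} = \emptyset}$, so the element $\beta + \mu_{\Gamma_\fracid{I}} \in \Delta^{\Gamma_\fracid{I}} \br{\tau_{\Gamma_\fracid{I}} - \alpha}$ immediately expels $\alpha$ from $K_{\Gamma_R}^0 - \Gamma_\fracid{I}$ and gives the contradiction. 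Neither the localization step, nor Lemma~\ref{064}, nor any lifting back to the ring is needed; if you repair your ending by one of these two routes, the rest of your argument can be salvaged, though much of its scaffolding becomes superfluous.
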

\begin{proof}
By Remark~\ref{003}.\ref{003b} we may assume that $\gamma_{\Gamma_\fracid{I}} = \gamma_{\Gamma_R}$. Suppose that there is an $\alpha \in \br{\Gamma_\fracid{I} - \Gamma_\fracid{I}} \setminus K_{\Gamma_{\fracid{I} : \fracid{I}}}^0$. Then there is a
\[
\beta \in \Delta^{\Gamma_{\fracid{I}:\fracid{I}}} \br{\tau_{\Gamma_{\fracid{I} : \fracid{I}}} - \alpha}
\]
(see Definition~\ref{017}). So Lemma~\ref{004} yields
\[
\beta + \mu_{\Gamma_\fracid{I}} \in \Delta \br{\tau_{\Gamma_{\fracid{I} : \fracid{I}}} + \mu_{\Gamma_\fracid{I}} - \alpha} \cap \br{\br{\Gamma_\fracid{I} - \Gamma_\fracid{I}} + \Gamma_\fracid{I}} \subset \Delta^{\Gamma_\fracid{I}} \br{\tau_{\Gamma_\fracid{I}} - \alpha}.
\]
This implies by Lemma~\ref{021} $\alpha \not\in K_{\Gamma_R}^0 - \Gamma_\fracid{I}$, contradicting $\alpha \in \Gamma_\fracid{I} - \Gamma_\fracid{I} \subset K_{\Gamma_R}^0 - \Gamma_\fracid{I}$ (see Remark~\ref{003}.\ref{003c}, Definition~\ref{015}.\ref{015b}, and Proposition~\ref{024}).
\end{proof}


\begin{lemma}
\label{014}
Let $R$ be an admissible ring, and let $\fracid{I} \in \RFI_R$. If $\fracid{I} : \fracid{I}$ is Gorenstein, then $\Gamma_{\fracid{I}:\fracid{I}} = \Gamma_\fracid{I} - \Gamma_\fracid{I}$.
\end{lemma}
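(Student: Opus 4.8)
The plan is to reduce the statement to a combinatorial one about the good semigroup $S = \Gamma_R$ and the good semigroup ideal $E = \Gamma_{\fracid I}$, and then to invoke Proposition~\ref{020} together with the machinery built up for canonical ideals. So suppose $\fracid I : \fracid I$ is Gorenstein. By Proposition~\ref{029}.\ref{029a}, $\fracid I : \fracid I$ is an admissible ring with the same discrete valuations as $R$, so its value semigroup as an admissible ring coincides with $\Gamma_{\fracid I : \fracid I}$ viewed as the value semigroup ideal of the fractional ideal $\fracid I : \fracid I$. Since $\fracid I : \fracid I$ is Gorenstein, Proposition~\ref{020} tells us that $\Gamma_{\fracid I : \fracid I}$ is a symmetric good semigroup, i.e.\ $\Gamma_{\fracid I : \fracid I}$ is its own canonical ideal. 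Our target is to show $\Gamma_{\fracid I : \fracid I} = \Gamma_\fracid I - \Gamma_\fracid I$; by Remark~\ref{022}.\ref{022c} the inclusion $\Gamma_{\fracid I : \fracid I} \subset \Gamma_\fracid I - \Gamma_\fracid I$ is automatic, so only the reverse inclusion requires work.

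For the reverse inclusion I would argue as follows. By Remark~\ref{003}.\ref{003b} we may translate $\fracid I$ (equivalently, replace $\Gamma_\fracid I$ by $\alpha + \Gamma_\fracid I$) so as to assume $\gamma_{\Gamma_\fracid I} = \gamma_{\Gamma_R}$; this does not change $\Gamma_\fracid I - \Gamma_\fracid I$ nor $\Gamma_{\fracid I : \fracid I}$, since $\fracid I : \fracid I = (\alpha + \fracid I) : (\alpha + \fracid I)$ on the ring side and likewise combinatorially. Write $T = \Gamma_{\fracid I : \fracid I}$, a good semigroup with $T \subset \ol T = \ol{\Gamma_R}$ (because $\fracid I : \fracid I$ is an integral extension of $R$, so $\Gamma_R \subset T \subset \ol{\Gamma_R}$, and Remark~\ref{032}.\ref{032c} applies). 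Since $T$ is symmetric, Theorem~\ref{018}.\ref{018d} gives $T = K_T^0$. Now Lemma~\ref{013} says $\Gamma_\fracid I - \Gamma_\fracid I \subset K_{\Gamma_{\fracid I : \fracid I}}^0 = K_T^0 = T = \Gamma_{\fracid I : \fracid I}$, which is exactly the inclusion we need. Combining the two inclusions yields $\Gamma_{\fracid I : \fracid I} = \Gamma_\fracid I - \Gamma_\fracid I$.

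The main obstacle is to make sure that all the hypotheses of Lemma~\ref{013} and of Theorem~\ref{018}.\ref{018d} are genuinely in force: Lemma~\ref{013} is stated unconditionally for admissible $R$ and $\fracid I \in \RFI_R$, so that is fine, but Theorem~\ref{018}.\ref{018d} requires $T$ to sit between a good semigroup and its normalization. The point $\Gamma_R \subset \Gamma_{\fracid I : \fracid I} \subset \ol{\Gamma_R}$ follows from Proposition~\ref{086} (so $\fracid I : \fracid I$ is an integral, hence birational and finite, extension of $R$ inside $Q_R$) together with Remark~\ref{022}.\ref{022a} and the fact that the normalization of $\fracid I : \fracid I$ equals $\ol R$. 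Once that chain of inclusions is recorded, everything else is formal: symmetry of $T$ forces $T = K_T^0$, and Lemma~\ref{013} closes the argument. Thus the proof is essentially "Lemma~\ref{013} plus symmetry of $\Gamma_{\fracid I : \fracid I}$ (from Proposition~\ref{020}) plus Theorem~\ref{018}.\ref{018d}", with Remark~\ref{022}.\ref{022c} supplying the trivial inclusion.
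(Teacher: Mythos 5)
Your proposal is correct and follows essentially the same route as the paper: the easy inclusion from Remark~\ref{022}.\ref{022c}, symmetry of $\Gamma_{\fracid I:\fracid I}$ from Proposition~\ref{020} giving $K_{\Gamma_{\fracid I:\fracid I}}^0=\Gamma_{\fracid I:\fracid I}$ via Theorem~\ref{018}.\ref{018d}, and Lemma~\ref{013} for $\Gamma_\fracid I-\Gamma_\fracid I\subset K_{\Gamma_{\fracid I:\fracid I}}^0$. The preliminary normalization $\gamma_{\Gamma_\fracid I}=\gamma_{\Gamma_R}$ and the chain $\Gamma_R\subset\Gamma_{\fracid I:\fracid I}\subset\ol{\Gamma_R}$ are superfluous here (the former is handled inside Lemma~\ref{013}, and Theorem~\ref{018}.\ref{018d} is applied with base semigroup $\Gamma_{\fracid I:\fracid I}$ itself), but they do no harm.
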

\begin{proof}
If  $\fracid{I} : \fracid{I}$ is Gorenstein, then Remark~\ref{022}.\ref{022c}, Theorem~\ref{018}.\ref{018d}, Proposition~\ref{020}, and Lemma~\ref{013} yield
\[
\Gamma_{\fracid{I}:\fracid{I}} \subset \Gamma_\fracid{I} - \Gamma_\fracid{I} \subset K_{\Gamma_{\fracid{I}:\fracid{I}}}^0 = \Gamma_{\fracid{I}:\fracid{I}}.
\qedhere
\]
\end{proof}


Collecting the above results, we obtain the proof of Theorem~\ref{001}.


\begin{proof}[Proof of Theorem~\ref{001}]\
\begin{description}[font=\normalfont]
\item[\ref{001a}$\implies$\ref{001b}]
If $\fracid{I}:\fracid{I}$ is Gorenstein, then $\Gamma_\fracid{I} - \Gamma_\fracid{I} = \Gamma_{\fracid{I}:\fracid{I}}$ by Lemma~\ref{014}. Hence, $\Gamma_\fracid{I} - \Gamma_\fracid{I}$ is symmetric by Proposition~\ref{020}.
\item[\ref{001b}$\implies$\ref{001c}]
If $\Gamma_\fracid{I} - \Gamma_\fracid{I}$ is symmetric, then $\Gamma_\fracid{I}$ is stable by Proposition~\ref{012}. So if $\Gamma_\fracid{I} - \Gamma_\fracid{I} = \Gamma_{\fracid{I}:\fracid{I}}$, then Proposition~\ref{011} implies that $\fracid{I}$ is stable.
\item[\ref{001c}$\implies$\ref{001d}]
If $\fracid I$ is stable, then $\Gamma_{\fracid I : \fracid I} = \Gamma_{\fracid I} - \Gamma_{\fracid I}$ is symmetric by Proposition~\ref{011}. By Proposition~\ref{020}, $\fracid I : \fracid I$ is therefore Gorenstein. With the stability of $\fracid I$, it follows from Proposition~\ref{034} that $\fracid I$ is a canonical ideal of $\fracid I : \fracid I$.
\item[\ref{001d}$\implies$\ref{001e}]
If $\fracid I$ is stable, then there is $y \in Q_R^\reg$ such that $\fracid I = y \br{\fracid I : \fracid I}$. If $\fracid I$ is a canonical ideal of $\fracid I : \fracid I$, then Proposition~\ref{034} and Lemma~\ref{035} yield
\[
\fracid I = z \br{\fracid K : \br{\fracid I : \fracid I}} = yz \br{\fracid K : \fracid I}
\]
for some $z \in Q_R^\reg$.
\item[\ref{001e}$\implies$\ref{001a}]
If $\fracid I$ is stable, there is $y \in Q_R^\reg$ such that $y \fracid I = \br{\fracid I : \fracid I}$. Thus,
\[
\fracid I : \fracid I = y \fracid I = y x \br{\fracid K : \fracid I} = x \br{\fracid K : \br{\fracid I : \fracid I}}
\]
is Gorenstein by Proposition~\ref{034} and Lemma~\ref{035}.
\qedhere
\end{description}
\end{proof}


\section{Symmetric Semigroups}
\label{112}


The aim for the rest of the paper is to characterize in Theorem~\ref{026} the Gorenstein local admissible rings for which the endomorphism ring of the maximal ideal is Gorenstein again. To achieve this, we first prove the combinatorial version in Theorem~\ref{023}.


\subsection{Differences in Symmetric Semigroups}


By Proposition~\ref{020}, an admissible ring is Gorenstein if and only if its value semigroup is symmetric. We therefore begin by studying differences in symmetric semigroups more closely.


\begin{lemma}
\label{038}
Let $S$ be a good semigroup, and let $\alpha \in \ZZ^I$ with $\alpha \leq \gamma_S$. Then $S^\alpha - S^\alpha = S - S^\alpha$.
\end{lemma}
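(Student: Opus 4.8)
The plan is to prove the two inclusions separately, the nontrivial one being $S - S^\alpha \subseteq S^\alpha - S^\alpha$. First I would note that $S^\alpha$ is nonempty: since $\alpha \leq \gamma_S$ and $\gamma_S \in S$ (because $\gamma_S + \zero \in \gamma_S + \ol S = C_S \subseteq S$ by Remark~\ref{053}.\ref{053b}), we have $\gamma_S \in S^\alpha$, so all the differences in sight are genuine semigroup ideals. The inclusion $S^\alpha - S^\alpha \subseteq S - S^\alpha$ is then immediate from $S^\alpha \subseteq S$ and monotonicity of the difference in its first argument (Remark~\ref{003}.\ref{003c}).

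For the reverse inclusion the key observation is that the hypothesis $\alpha \leq \gamma_S$ forces every $z \in S - S^\alpha$ to satisfy $z \geq \zero$. First I would record that $C_S = \gamma_S + \ol S \subseteq S^\alpha$: indeed $C_S \subseteq S$ by Remark~\ref{053}.\ref{053b}, and each element $\gamma_S + \beta$ of $\gamma_S + \ol S$ (with $\beta \geq \zero$) satisfies $\gamma_S + \beta \geq \gamma_S \geq \alpha$. Now let $z \in S - S^\alpha$; then $z + C_S \subseteq z + S^\alpha \subseteq S$, i.e.\ $z + \gamma_S + \ol S \subseteq S$, which by Definition~\ref{052}.\ref{052b} means $z + \gamma_S \in S - \ol S = C_S = \gamma_S + \ol S$ (using Remark~\ref{053}.\ref{053b} once more). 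Hence $z + \gamma_S = \gamma_S + \beta$ for some $\beta \geq \zero$, so $z = \beta \geq \zero$.

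It then remains to conclude: for $z \in S - S^\alpha$ and any $w \in S^\alpha$ we have $z + w \in S$ by definition of the difference (Definition~\ref{054}), and $z + w \geq \zero + \alpha = \alpha$ by the previous step together with $w \geq \alpha$; therefore $z + w \in S^\alpha$. This shows $z + S^\alpha \subseteq S^\alpha$, i.e.\ $z \in S^\alpha - S^\alpha$, and combining with the easy inclusion gives $S^\alpha - S^\alpha = S - S^\alpha$. I do not anticipate a serious obstacle; the only point that needs care is the bookkeeping around the conductor ideal $C_S = \gamma_S + \ol S$ and keeping track of where the hypothesis $\alpha \leq \gamma_S$ is actually used (namely to guarantee $C_S \subseteq S^\alpha$, which is exactly what pins down the nonnegativity of $S - S^\alpha$).
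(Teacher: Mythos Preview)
Your proof is correct and follows essentially the same route as the paper: both arguments establish that every element of $S - S^\alpha$ is $\geq \zero$ via the conductor (the paper phrases this as $\mu_{S - S^\alpha} \geq \gamma_S - \gamma_{S^\alpha} = 0$, using $\gamma_{S^\alpha} = \gamma_S$), and then deduce $(S - S^\alpha) + S^\alpha \subset (\ZZ^I)^\alpha \cap S = S^\alpha$. Your version is in fact a bit more careful, since you argue elementwise and never rely on $S - S^\alpha$ having a unique minimal element.
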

\begin{proof}
Definition~\ref{052}.\ref{052c} and Remark~\ref{053}.\ref{053b} yield $\mu_{S - S^\alpha} \geq \gamma_S - \gamma_{S^\alpha} = \gamma_S - \gamma_S = 0$. This implies with Remark~\ref{053}.\ref{053a}
\[
\br{S - S^\alpha} + S^\alpha \subset \br{\ZZ^I}^\alpha \cap S = S^\alpha,
\]
and hence $S - S^\alpha \subset S^\alpha - S^\alpha$.

The claim follows since $S^\alpha - S^\alpha \subset S - S^\alpha$ by Remark~\ref{003}.\ref{003c}.
\end{proof}


\begin{lemma}
\label{037}
Let $S$ be a good symmetric semigroup, and let $\alpha \in \ZZ^I$ with $\alpha \leq \gamma_S$. Then $S^\alpha - S^\alpha$ is a good semigroup with $S \subset S^\alpha - S^\alpha \subset \ol S$. In particular, if $S = \ol S$, then $S^\alpha - S^\alpha = S$.
\end{lemma}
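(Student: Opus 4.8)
The plan is to reduce everything to the equality $S^\alpha - S^\alpha = S - S^\alpha$ established in Lemma~\ref{038} and then to analyse $S - S^\alpha$ via the canonical-ideal duality. First I would record that $S^\alpha \in \GSI_S$ by Remark~\ref{025}.\ref{025b} (applied to $E = S \in \GSI_S$; note $\gamma_S \in S^\alpha$, so $S^\alpha \neq \emptyset$), and that, since $S$ is symmetric, $S$ is a canonical ideal of itself by Definition~\ref{019}.\ref{019b}. Then Theorem~\ref{018}.\ref{018e} applies and yields $S - S^\alpha \in \GSI_S$; in particular $S - S^\alpha$ satisfies \ref{E0}, \ref{E1}, and \ref{E2} (the first via Remark~\ref{032}.\ref{032a}). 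This is the step that genuinely uses the symmetry hypothesis: for a non-symmetric good semigroup the difference $S^\alpha - S^\alpha$ need not be good at all (compare Example~\ref{088}), so I expect this to be the main obstacle, with the rest amounting to bookkeeping about minimal elements and conductors.

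It then remains to supply the three features not automatic from membership in $\GSI_S$: that $S^\alpha - S^\alpha$ is a submonoid of $\ZZ^I$, that it lies between $S$ and $\ol S$, and that its unique minimal element is $\zero$. For the submonoid property I would argue directly: $\zero + S^\alpha = S^\alpha$, and if $\beta + S^\alpha \subset S^\alpha$ and $\beta' + S^\alpha \subset S^\alpha$, then $\br{\beta + \beta'} + S^\alpha \subset \beta + S^\alpha \subset S^\alpha$. For $S \subset S^\alpha - S^\alpha$, observe that $S^\alpha$ is a semigroup ideal of $S$, so $\beta + S^\alpha \subset S^\alpha \subset S$ for every $\beta \in S$, i.e.\ $\beta \in S - S^\alpha$. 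For $S^\alpha - S^\alpha \subset \ol S$, I would reuse the computation in the proof of Lemma~\ref{038}: since $\alpha \leq \gamma_S$ we have $\gamma_{S^\alpha} = \gamma_S$ (because $\gamma_S + \ol S \subset S \cap \br{\ZZ^I}^\alpha = S^\alpha$, while $S^\alpha \subset S$ forces $\gamma_{S^\alpha} \geq \gamma_S$), hence $\mu_{S - S^\alpha} \geq \gamma_S - \gamma_{S^\alpha} = \zero$ and therefore $S - S^\alpha \subset \mu_{S - S^\alpha} + \ol S \subset \ol S$ by Remark~\ref{053}.\ref{053a}. Being a submonoid of $\ZZ^I$ contained in $\ol S = \NN^I$, it has $\zero$ as its unique minimal element, so Definition~\ref{096}.\ref{096a} holds.

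Putting these together, $S^\alpha - S^\alpha$ is a submonoid of $\ZZ^I$ with minimal element $\zero$ satisfying \ref{E0}, \ref{E1}, and \ref{E2}, hence a good semigroup in the sense of Definition~\ref{096}, and $S \subset S^\alpha - S^\alpha \subset \ol S$. The final assertion follows at once: if $S = \ol S$, the chain $\ol S = S \subset S^\alpha - S^\alpha \subset \ol S$ collapses, giving $S^\alpha - S^\alpha = S$.
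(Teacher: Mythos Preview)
Your proposal is correct and follows essentially the same approach as the paper: both use Lemma~\ref{038} to rewrite $S^\alpha - S^\alpha$ as $S - S^\alpha$, then invoke the symmetry of $S$ together with Theorem~\ref{018}.\ref{018e} to obtain membership in $\GSI_S$ (hence \ref{E1} and \ref{E2}), and handle the submonoid property and the inclusions $S \subset S^\alpha - S^\alpha \subset \ol S$ by elementary arguments. The only difference is cosmetic: the paper outsources the ``straightforward'' submonoid and inclusion verifications to \cite[Proposition~4.25]{korell2018}, whereas you spell them out explicitly.
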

\begin{proof}
A straightforward argument shows that $S^\alpha - S^\alpha$ is a submonoid of $\ZZ^I$ and $S \subset S^\alpha - S^\alpha \subset \ol{S^\alpha - S^\alpha} = \ol S$ (see \cite[Proposition~4.25]{korell2018}). Moreover, $S^\alpha - S^\alpha$ satisfies property~\ref{E0} by Remarks~\ref{032}.\ref{032a}, \ref{055}, and \ref{025}.\ref{025b}.

Since $S$ is symmetric, i.e.\ $S$ is a canonical ideal of itself, Remark~\ref{025}.\ref{025b}, Theorem~\ref{018}.\ref{018e}, and Lemma~\ref{038} yield
\[
S^\alpha - S^\alpha = S - S^\alpha \in \GSI_S.
\]
Thus, $S^\alpha - S^\alpha$ satisfies properties~\ref{E1} and \ref{E2}.
\end{proof}


The following Lemma was proved for numerical semigroups in \cite[Satz~1.9.(e)]{HK71}. Since $S - M_S = S - S$ (see \cite[Proposition~4.38]{korell2018}), the general case follows from \cite[Lemma~3.5]{j.pure.app.alg.147.215}. Here we provide a different proof (cf.~\cite[Lemma~5.44]{korell2018}).


\begin{lemma}
\label{042}
Let $S$ be a local symmetric semigroup with maximal ideal $M_S$. If $S \ne \ol S$, then $M_S - M_S = S \cup \Delta \br{\tau_S}$.
\end{lemma}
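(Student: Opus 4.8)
We want to show $M_S - M_S = S \cup \Delta(\tau_S)$ for a local symmetric semigroup $S$ with $S \ne \ol S$. The key fact I would use at the outset is that $M_S - M_S = S - M_S = S - S$ (as noted before the statement, via \cite[Proposition~4.38]{korell2018}), so it suffices to compute $S - S$. Since $S$ is symmetric, Lemma~\ref{037} applied with $\alpha = \gamma_S$ is not directly what we want, but the same circle of ideas gives $S - S \in \GSI_S$ with $S \subset S - S \subset \ol S$; moreover $\gamma_{S-S} = \gamma_S - \mu_S = \gamma_S$ by Lemma~\ref{004} (using $\mu_S = \zero$ since $S$ is a good semigroup). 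Thus $S - S$ is a good semigroup ideal with the same conductor as $S$, so Proposition~\ref{024} gives $S - S \subset K_S^0 = S$ \emph{only if} we already knew $S-S$ were $S$ — so instead the point is the reverse: $S-S$ is an over-semigroup with the same conductor, and I would pin down exactly which elements of $\ol S \setminus S$ lie in it.

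\textbf{Main steps.} First, the inclusion $S \cup \Delta(\tau_S) \subset M_S - M_S$: the part $S \subset S-S = M_S-M_S$ is Remark~\ref{003}.\ref{003d} (or immediate). For $\Delta(\tau_S) \subset M_S - M_S$, take $\beta \in \Delta(\tau_S)$, i.e.\ $\beta_j = (\tau_S)_j$ for exactly one $j$ and $\beta_i > (\tau_S)_i$, hence $\beta_i \ge (\gamma_S)_i$, for all $i \ne j$. I would show $\beta + M_S \subset S$: for $\sigma \in M_S = S \setminus\{\zero\}$, if $\sigma_j > 0$ then $(\beta + \sigma)_j \ge (\tau_S)_j + 1 = (\gamma_S)_j$ and $(\beta+\sigma)_i \ge (\gamma_S)_i$ for $i \ne j$, so $\beta + \sigma \ge \gamma_S$, hence $\beta + \sigma \in C_S \subset S$ by Remark~\ref{053}.\ref{053b}; if $\sigma_j = 0$ then, since $S$ is local, $\sigma = \zero$, contradiction. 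So $\beta \in S - M_S = M_S - M_S$. Second, the reverse inclusion $M_S - M_S \subset S \cup \Delta(\tau_S)$: let $\alpha \in M_S - M_S = S - S$ with $\alpha \notin S$. Because $S$ is symmetric, $\alpha \notin S$ means $\Delta^S(\tau_S - \alpha) \ne \emptyset$, so there is $i \in I$ and $\delta \in S$ with $\delta_i = (\tau_S - \alpha)_i$ and $\delta_k > (\tau_S - \alpha)_k$ for $k \ne i$. I would then argue that $\alpha \in \Delta_i(\tau_S)$ by showing $\alpha_i = (\tau_S)_i$ and $\alpha_k > (\tau_S)_k$ for $k \ne i$. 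For the components $k \ne i$: $\delta \in S \subset \ol S$ gives $\delta_k \ge 0$, so $(\tau_S)_k - \alpha_k < \delta_k$, and combined with $\alpha \in S-S$ — so $\alpha + \delta \in S$, hence $(\alpha+\delta)_k \ge 0$ whenever... — the inequality $\alpha_k > (\tau_S)_k$ should drop out; if $\alpha_k \le (\tau_S)_k$ for some $k \ne i$ then $\delta_k > (\tau_S)_k - \alpha_k \ge 0$, and I need to derive a contradiction with symmetry, probably by producing an element of $\Delta^S(\tau_S)$ itself (impossible since $\tau_S = \gamma_S - \one \notin$ anything forcing $\Delta^S(\tau_S) = \emptyset$ — indeed $\Delta^S(\tau_S - \zero)$ empty iff $\zero \in S$, which holds). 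For component $i$: $\delta_i = (\tau_S)_i - \alpha_i$; since $\alpha \notin S$ and using $\alpha + S^{?} \subset S$ I must force $\delta_i = 0$, i.e.\ $\alpha_i = (\tau_S)_i$, and then $\alpha_i > (\tau_S)_i$ is excluded and $\alpha_i < (\tau_S)_i$ would have to be ruled out using that $\alpha \in S - S$ so $\alpha + \gamma_S \in S$ has $i$-component $\ge \gamma_S$-component... I would pin this down by combining $\alpha \in S - S$ (so $\alpha + M_S \subset S$), locality, and symmetry.

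\textbf{Expected obstacle.} The delicate part is the reverse inclusion, specifically showing that when $\alpha \in S - S \setminus S$ the witness $\delta \in \Delta^S(\tau_S - \alpha)$ is forced to sit in exactly one $\Delta_i$ \emph{and} that the corresponding $\alpha$ has $\alpha_i = (\tau_S)_i$ exactly (not $<$). I expect to need the following: from $\alpha \in S-S$, for every $\sigma \in M_S$ we have $\alpha + \sigma \in S$; choosing $\sigma$ supported appropriately (using property \ref{E2} of $S$ and locality to produce elements of $S$ large in the $i$-th coordinate and controlled elsewhere) should show that if $\alpha_i < (\tau_S)_i$ then $\tau_S$ itself would be "filled in" by an element of $S$ above $\tau_S - \alpha + \sigma$, contradicting symmetry of $S$ at $\tau_S$. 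I would handle the edge case $S = \ol S$ being excluded by hypothesis precisely here, since that is where $\Delta(\tau_S)$ genuinely contributes new elements. The bookkeeping with the partial order and the $\Delta_J$/$\ol\Delta_J$ notation of Definition~\ref{039} is where the real care is required; everything else is a direct application of symmetry (Definition~\ref{019}.\ref{019b}), locality (Remark~\ref{107}), and Lemma~\ref{004}.
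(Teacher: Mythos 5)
Your first half is fine: $S \subset M_S - M_S$ is immediate, and your direct check that $\beta + \sigma \geq \gamma_S$ for $\beta \in \Delta \br{\tau_S}$ and $\sigma \in M_S$ (using locality, so $\sigma \geq \one$ by Remark~\ref{107}) gives $\beta + M_S \subset C_S \setminus \cbr{\zero} \subset M_S$, hence $\Delta \br{\tau_S} \subset M_S - M_S$; this is a legitimate substitute for the paper's derivation of the same inclusion from Lemmas~\ref{004} and \ref{037}. But the identity you lean on, $M_S - M_S = S - M_S = S - S$, cannot be right as stated: $S - S = S$ by Remark~\ref{003}.\ref{003d}, so taken literally it would make the lemma vacuous. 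What is true, and what you actually need, is $M_S - M_S = S - M_S$, which follows from Lemma~\ref{038} with $\alpha = \one \leq \gamma_S$ (valid because $S$ is local and $S \neq \ol S$). The practical consequence is that from $\alpha \in M_S - M_S$ you may only conclude $\alpha + \delta \in S$ when $\delta \in M_S$, i.e.\ when $\delta \neq \zero$ --- not for an arbitrary element $\delta \in S$, which is precisely the point where your argument for the reverse inclusion needs care.

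And that reverse inclusion is where the genuine gap lies: the phrases \enquote{should drop out}, \enquote{I must force $\delta_i = 0$}, and the entire \enquote{expected obstacle} paragraph are placeholders for the only nontrivial part of the lemma, and the coordinate-by-coordinate route you sketch is never carried out (and, as written, relies on $\alpha + \delta \in S$ for all $\delta \in S$, i.e.\ on the false identity above). The missing step is short. Take $\alpha \in \br{M_S - M_S} \setminus S$ and, by symmetry, a witness $\delta \in \Delta^S \br{\tau_S - \alpha}$. If $\delta = \zero$, then by Definition~\ref{039} we get $\alpha_i = \br{\tau_S}_i$ and $\alpha_k > \br{\tau_S}_k$ for $k \neq i$, i.e.\ $\alpha \in \Delta \br{\tau_S}$, as desired. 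If $\delta \neq \zero$, then $\delta \in M_S$, so $\alpha + \delta \in M_S \subset S$; on the other hand $\delta \in \Delta_i \br{\tau_S - \alpha}$ forces $\alpha + \delta \in \Delta_i \br{\tau_S}$, so $\alpha + \delta \in \Delta^S \br{\tau_S} = \emptyset$ (you correctly note this emptiness yourself, via $\zero \in S = K_S^0$), a contradiction. This is essentially the paper's proof: there one assumes $\alpha \notin S \cup \Delta \br{\tau_S}$, uses Lemma~\ref{040} to rule out $\alpha \in \ol \Delta \br{\tau_S}$, concludes the witness lies in $M_S$, and reaches the same contradiction $\alpha + \beta \in \Delta^{M_S} \br{\tau_S} \subset \Delta^S \br{\tau_S} = \emptyset$. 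So the idea you gesture at (\enquote{producing an element of $\Delta^S \br{\tau_S}$}) is exactly the right one, but your proposal stops at the point where the proof has to be made, so as it stands it does not establish $M_S - M_S \subset S \cup \Delta \br{\tau_S}$.
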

\begin{proof}
Lemmas~\ref{004} and \ref{037} imply $S \cup \Delta^S \br{\tau_S} \subset M_S - M_S$ since $\mu_{M_S} \geq \one$ by Remark~\ref{107}.

Now suppose that there is an $\alpha \in \br{M_S - M_S} \setminus \br{S \cup \Delta \br{\tau_S}}$. Since $S$ is symmetric, there is a $\beta \in \Delta^S \br{\tau_S - \alpha}$. By assumption and Lemma~\ref{040} we have $\alpha \not\in \ol{\Delta}^S \br{\tau_S}$. Therefore, $\tau_S - \alpha > \zero$, and hence $\beta \in \Delta^{M_S} \br{\tau_S - \alpha}$. This implies
\[
\alpha + \beta \in \br{\alpha + \Delta \br{\tau_S - \alpha}} \cap \br{\br{M_S - M_S} + M_S} \subset \Delta^{M_S} \br{\tau_S} \subset \Delta^S \br{\tau_S} = \emptyset,
\]
where the last equality follows from properties~\ref{E1} and \ref{E2} (see \cite[Lemma~4.1.10]{j.commut.algebra.11.81}).
\end{proof}


\subsection{Local Symmetric Semigroups}


Let $S$ be a local good semigroup with maximal ideal $M_S$. We can now fully classify when $S$ and $M_S - M_S$ are symmetric.


\begin{theorem}
\label{023}
Let $S$ be a local good semigroup with maximal ideal $M_S$. The following are equivalent.
\begin{equivalence}
\item\label{023a}
Every good semigroup $S'$ with $S \subset S' \subset \ol S$ is symmetric.
\item\label{023b}
$S$ and $M_S - M_S$ are symmetric good semigroups.
\item\label{023c}
There is an $n \in 2 \NN$ such that
\[
S = \abr{2, n+1},
\]
or there is an $n \in 2 \NN + 1$ such that
\[
S = \abr{\one} \cup \br{\br{\frac{n+1}{2}}_{i \in I} + \NN^{I}},
\]
where $\vbr{I} = 2$.
\end{equivalence}
\end{theorem}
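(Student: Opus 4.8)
The plan is to prove the cycle $\ref{023a}\Rightarrow\ref{023b}\Rightarrow\ref{023c}\Rightarrow\ref{023a}$, using the symmetric-semigroup machinery from Section~\ref{112} and the difference formula for $M_S$.

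\emph{Step 1: $\ref{023a}\Rightarrow\ref{023b}$.} This is essentially a consequence of Lemma~\ref{037}. Taking $S'=S$ in \ref{023a} gives that $S$ is symmetric. Since $S$ is local, $M_S-M_S = M_S-S = S-S$ (Remark~\ref{107} together with the identity $S-M_S=S-S$ quoted before Lemma~\ref{042}), and by Lemma~\ref{037} applied with $\alpha=\one\le\gamma_S$ this is a good semigroup sitting between $S$ and $\ol S$ (the edge case $S=\ol S$ being trivial). Hence \ref{023a} applies to it and it is symmetric.

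\emph{Step 2: $\ref{023b}\Rightarrow\ref{023c}$.} This is the heart of the argument. Assume $S$ and $M_S-M_S$ are symmetric; I would first dispose of the case $S=\ol S\cong\NN^{|I|}$, which forces $|I|=1$ (otherwise $\ol S$ is not local) and $S=\NN=\abr{2,3}$ with $n=2$, landing in the first family. So assume $S\ne\ol S$. Then Lemma~\ref{042} gives $M_S-M_S = S\cup\Delta(\tau_S)$. Since $M_S-M_S$ is symmetric with the same conductor $\gamma_S$ as $S$ (its conductor is $\gamma_{M_S-M_S}=\gamma_S-\mu_{M_S}$ by Lemma~\ref{004}, but $\gamma_{M_S}=\gamma_S$ since $C_{M_S}=C_S$, so actually one must be careful here — I would instead note directly that $\tau_S-\tau_{M_S-M_S}$ is controlled and argue on the level sets), the symmetry of both $S$ and $M_S-M_S = S\cup\Delta(\tau_S)$ is a strong numerical constraint: $\alpha\in S$ iff $\Delta^S(\tau_S-\alpha)=\emptyset$, while $\alpha\in S\cup\Delta(\tau_S)$ iff $\Delta^{S\cup\Delta(\tau_S)}(\tau_{S\cup\Delta(\tau_S)}-\alpha)=\emptyset$. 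I would count: the only elements added in passing from $S$ to $M_S-M_S$ lie on $\Delta(\tau_S)$, so symmetry of the larger semigroup pins down exactly which $\Delta$-sets can be non-empty. In the case $|I|=1$ (numerical semigroups) this is the classical computation of Herzog--Kunz reproving that $S$ with $S$ and $M_S-M_S$ both symmetric forces $S=\abr{2,n+1}$ for even $n$; for $|I|\ge 2$ one shows $|I|=2$ and that $S$ must be the ``half-line plus origin'' semigroup $\abr{\one}\cup(\tfrac{n+1}{2}\one+\NN^I)$ with $n$ odd. I expect to split into $|I|=1$ and $|I|\ge2$ and, in the latter case, to use that symmetry plus property~\ref{E2} on $M_S-M_S=S\cup\Delta(\tau_S)$ forces the conductor of $S$ to have both coordinates equal (so $\gamma_S=(n+1)\one$ for odd $n$, hence $\tau_S=n\one$), and that no ``small'' nonzero elements other than multiples of $\one$ below $\gamma_S$ can be present, as any such element would, via symmetry, create an element of $\Delta^S(\tau_S)$, contradicting Lemma~\ref{004}/the emptiness of $\Delta^S(\tau_S)$.

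\emph{Step 3: $\ref{023c}\Rightarrow\ref{023a}$.} Here one verifies by direct computation that in either family every intermediate good semigroup is symmetric. For $S=\abr{2,n+1}$ with $n$ even, the intermediate semigroups $S\subset S'\subset\ol S=\NN$ are exactly $S$ itself and $\NN$ (since $S$ has only one ``gap interval''), both of which are symmetric; more generally one checks $\abr{2,n+1}$, $\abr{2,n-1}$, \dots, $\NN$ are the intermediate ones and all are $\abr{2,m+1}$, symmetric. For the two-branch family, one parametrizes the intermediate good semigroups — they are again ``half-lines plus $\abr{\one}$'' with smaller conductor along the diagonal, together possibly with $\ol S$ — and checks the symmetry condition $S'=\{\alpha\mid\Delta^{S'}(\tau_{S'}-\alpha)=\emptyset\}$ directly using locality (Lemma~\ref{040} and Remark~\ref{107}); this is the kind of routine but slightly fiddly verification that I would carry out coordinate by coordinate.

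\emph{Main obstacle.} The real work is Step~2, and within it the case $|I|\ge 2$: showing first that $|I|$ cannot exceed $2$, and then that the only local symmetric $S$ with $M_S-M_S$ also symmetric is the diagonal half-line construction. The delicate point is bookkeeping with the $\Delta$- and $\ol\Delta$-sets and correctly tracking how $\gamma$ and $\tau$ change between $S$ and $M_S-M_S$ (Lemma~\ref{004}), so that the two symmetry conditions can be compared on the same window; this is where I expect most of the technical weight of the proof to lie.
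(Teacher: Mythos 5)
Your plan follows the same route as the paper: the cycle \ref{023a}$\implies$\ref{023b}$\implies$\ref{023c}$\implies$\ref{023a}, with Lemmas~\ref{037} and \ref{038} giving the first implication, Lemma~\ref{042} as the structural input $M_S - M_S = S \cup \Delta\br{\tau_S}$ for the second, and a direct inspection of the two families for the third. Steps 1 and 3 are essentially the paper's arguments, up to minor slips: by Remark~\ref{003}.\ref{003d} one has $M_S - S = M_S$, so your chain should read $M_S - M_S = S - M_S$ (Lemma~\ref{038} with $\alpha = \one$, using Remark~\ref{107}); and $\NN \ne \abr{2,3}$, so the case $S = \ol S$ lands at $n = 0$, not $n = 2$.

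The genuine gap is Step 2, which you rightly call the heart but for which you supply no working mechanism beyond the numerical case (which can indeed be delegated to Herzog--Kunz). The paper's proof of \ref{023b}$\implies$\ref{023c} rests on ideas absent from your sketch. First, symmetry of $M_S - M_S$ forces $M_S$ to be \emph{stable} (Proposition~\ref{012}), so $M_S = \mu_{M_S} + \br{M_S - M_S} = \br{\mu_{M_S} + S} \cup \Delta\br{\tau_S + \mu_{M_S}}$ by Lemma~\ref{042}; iterating gives that every element of $M_S$ below the conductor is a multiple of $\mu_{M_S}$, i.e.\ $S = \abr{\mu_{M_S}} \cup C_S$ (Lemmas~\ref{060} and \ref{061}). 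Second, this structure combined with Lemma~\ref{040} ($\ol\Delta\br{\tau_S} \setminus \Delta\br{\tau_S} \subset S$) is what excludes $\vbr{I} \geq 3$ (Lemma~\ref{062}); you state that $\vbr{I} \leq 2$ must be shown but propose no argument for it. Third, the conductor displacement $\gamma_{M_S - M_S} = \gamma_S - \mu_{M_S}$ (Lemma~\ref{004}), which you flag as a point of confusion and leave unresolved, is precisely the lever: in the numerical case it is paired with the gap count of Lemma~\ref{059}, applied to both $S$ and $S \cup \cbr{\tau_S}$, to pin $\mu_{M_S} = 2$ and hence $S = \abr{2, \gamma_S + 1}$ (Proposition~\ref{057}); for $\vbr{I} = 2$ a separate argument gives $\gamma_{M_S - M_S} = \tau_S$, hence $\mu_{M_S} = \one$, after which property~\ref{E1} and Lemma~\ref{064} force the conductor onto the diagonal (Proposition~\ref{063}). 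Your proposed substitute --- that a small element of $S$ other than a multiple of $\one$ would \enquote{via symmetry create an element of $\Delta^S\br{\tau_S}$} --- is not an argument as it stands: symmetry of $S$ relates membership of $\alpha$ to $\Delta^S\br{\tau_S - \alpha}$, and a contradiction with the emptiness of $\Delta^S\br{\tau_S}$ only materializes once the stability of $M_S$ is in play. So, while the skeleton is right, the implication \ref{023b}$\implies$\ref{023c} remains unproved in your proposal.
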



Note that the equivalence of statements \ref{023a} and \ref{023c} of Theorem~\ref{023} was shown in \cite[Lemma 1.15]{HK71} for numerical semigroups.


\begin{lemma}
\label{059}
Let $S$ be a numerical symmetric semigroup. Then $\gamma_S = 2 \vbr{\ol S \setminus S}$.
\end{lemma}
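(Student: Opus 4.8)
The plan is to reduce the assertion to the classical statement that a symmetric numerical semigroup has exactly half as many gaps as its conductor. Since $S$ is numerical we have $\vbr{I}=1$; identify $\ZZ^I$ with $\ZZ$ and $\ol S$ with $\NN$. The first step is to unwind the general good-semigroup notion of symmetry in this one-dimensional setting: for $\vbr{I}=1$ the only subset of $I$ is $I$ itself, so Definition~\ref{039} collapses to $\Delta\br{\beta}=\cbr{\beta}$ for every $\beta\in\ZZ$, and hence $\Delta^S\br{\tau_S-\alpha}=\emptyset$ is equivalent to $\tau_S-\alpha\notin S$. Consequently symmetry of $S$ (Definition~\ref{019}.\ref{019b}) says exactly that $\alpha\in S\iff\tau_S-\alpha\notin S$ for all $\alpha\in\ZZ$. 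I expect this reduction, together with treating the degenerate case $S=\ol S$ (where $\gamma_S=0$ and both sides of the claimed identity vanish), to be the only place requiring care; everything after it is elementary counting.

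Assuming now $S\ne\ol S$, the second step is to localize to the finite interval $T=\cbr{0,1,\dots,\tau_S}$, which has $\gamma_S$ elements since $\tau_S=\gamma_S-1$. By Remark~\ref{053}.\ref{053b} we have $\gamma_S+\NN=C_S\subset S$, so every integer $\ge\gamma_S$ lies in $S$; together with $0\in S$ this shows that every gap of $S$ lies in $T$. Hence $T$ decomposes as the disjoint union of $S\cap T$ and $\ol S\setminus S$, which gives $\vbr{S\cap T}+\vbr{\ol S\setminus S}=\gamma_S$.

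The third and final step is to compare the two pieces via the involution $\sigma\br{\alpha}=\tau_S-\alpha$ on $\ZZ$, which maps $T$ bijectively onto itself. Using the symmetry relation from the first step: if $\alpha\in S\cap T$ then $\sigma\br{\alpha}\in T$ and $\sigma\br{\alpha}\notin S$, so $\sigma\br{\alpha}\in\ol S\setminus S$; conversely if $\beta\in\ol S\setminus S$ then $\beta\in T$ and $\sigma\br{\beta}\in S\cap T$. Thus $\sigma$ restricts to a bijection between $S\cap T$ and $\ol S\setminus S$, whence $\vbr{S\cap T}=\vbr{\ol S\setminus S}$. Substituting this into the count $\vbr{S\cap T}+\vbr{\ol S\setminus S}=\gamma_S$ from the second step yields $\gamma_S=2\vbr{\ol S\setminus S}$, as desired. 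No case distinction beyond $S=\ol S$ versus $S\ne\ol S$ is needed.
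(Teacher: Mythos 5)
Your proof is correct. Note that the paper itself gives no argument here: it simply cites Herzog--Kunz (Satz~1.9.(c) of \cite{HK71}), so what you have written out is precisely the classical counting proof behind that citation. Your reduction in the first step is the only part with content specific to this paper's framework, and it is handled properly: with $\vbr{I}=1$ the union defining $\Delta$ runs over the single index, so indeed $\Delta\br{\beta}=\cbr{\beta}$ and Definition~\ref{019}.\ref{019b} collapses to the classical symmetry condition $\alpha\in S\iff\tau_S-\alpha\notin S$ (your phrase \enquote{the only subset of $I$ is $I$ itself} is slightly loose, since $\emptyset\subset I$ too, but irrelevant because $\Delta$ is a union over singletons). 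After that, the split of $\cbr{0,\dots,\tau_S}$ into elements of $S$ and gaps, together with the involution $\alpha\mapsto\tau_S-\alpha$ exchanging the two pieces, gives $\gamma_S=2\vbr{\ol S\setminus S}$ exactly as claimed, and your separate treatment of $S=\ol S$ is harmless (the main argument would in fact cover it with $T=\emptyset$). The only thing your write-up buys over the paper is self-containedness; the only thing the citation buys is brevity.
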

\begin{proof}
See \cite[Satz~1.9.(c)]{HK71}.
\end{proof}


\begin{proposition}
\label{057}
Let $S$ be a numerical symmetric semigroup. If $M_S - M_S$ is symmetric (see Remark~\ref{095}), then $S = \abr{2, n+1}$ for some $n \in 2 \NN$.
\end{proposition}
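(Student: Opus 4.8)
The plan is to work in the numerical setting throughout, so $I=\{1\}$, $S\subset\NN$ is a numerical semigroup, $\ol S=\NN$, and $\gamma_S$ is the usual Frobenius-plus-one conductor. We are given that $S$ is symmetric and that $M_S-M_S$ is again a (numerical, by Remark~\ref{095}) symmetric semigroup, and we must deduce $S=\langle 2,n+1\rangle$ for some even $n$. First I would dispose of the trivial case $S=\ol S=\NN$: then $M_S-M_S=\NN=\langle 2,1\rangle$ (take $n=0\in 2\NN$), so we may assume $S\ne\ol S$, i.e. $\gamma_S\ge 2$.

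Next I would apply Lemma~\ref{042}: since $S$ is local symmetric with $S\ne\ol S$, we have $M_S-M_S=S\cup\Delta(\tau_S)$, and in the numerical case $\Delta(\tau_S)=\{\tau_S\}=\{\gamma_S-1\}$. So $T:=M_S-M_S=S\cup\{\gamma_S-1\}$. Note $\gamma_S-1\notin S$ (otherwise $\tau_S\in S$ would contradict $\gamma_S$ being the conductor), so $T$ is obtained from $S$ by adjoining exactly the element $\gamma_S-1$; hence $|\NN\setminus T|=|\NN\setminus S|-1$. Also $\gamma_S-1+\NN\subset T$ because $\gamma_S+\NN\subset S\subset T$ and $\gamma_S-1\in T$, so the conductor of $T$ satisfies $\gamma_T=\gamma_S-1$ (it cannot be smaller since $\gamma_S-2\notin T$: indeed $\gamma_S-2\notin S$ as $\gamma_S$ is the conductor and $S\ne$ already contains $\gamma_S-1$... more carefully, if $\gamma_S-2\in T=S\cup\{\gamma_S-1\}$ then $\gamma_S-2\in S$, but then $\tau_S=\gamma_S-1\notin S$ together with $\gamma_S-2\in S$ is fine in general — so I need to argue $\gamma_T=\gamma_S-1$ directly from the count below rather than from this).

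Now I bring in Lemma~\ref{059} twice. For $S$: $\gamma_S=2|\NN\setminus S|$. For $T$, which is symmetric by hypothesis: $\gamma_T=2|\NN\setminus T|=2(|\NN\setminus S|-1)=\gamma_S-2$. On the other hand $\gamma_T\le\gamma_S-1$ (since $\gamma_S-1\in T$ forces the conductor of $T$ to be at most $\gamma_S-1$), and $\gamma_T\ge\gamma_S-1$ is not automatic, so instead I use: $\gamma_T=\gamma_S-2$ means $\gamma_S-2+\NN\subset T=S\cup\{\gamma_S-1\}$, hence $\gamma_S-2\in S$. Combined with $\gamma_S$ being the conductor of $S$ (so $\gamma_S-1\notin S$) this says: $\gamma_S-2\in S$, $\gamma_S-1\notin S$, $\gamma_S+\NN\subset S$. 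Set $n+1=\gamma_S-1$, i.e. $n=\gamma_S-2$; since $\gamma_S=2|\NN\setminus S|$ is even, $n$ is even, and $n\ge 0$, so $n\in 2\NN$. It remains to show $S=\langle 2,n+1\rangle$. The inclusion $\langle 2,n+1\rangle\subset S$ follows because $2\in S$ (as $1\notin S$, since $S$ local with $S\ne\ol S$... actually $1\notin S$ because $S\ne\NN$ and $S$ local means $M_S=S\setminus\{0\}$, but $1$ could in principle be in $S$; I'd instead note $2=\gamma_S-n=\gamma_S-(\gamma_S-2)\in$ — no: I must show $2\in S$ via symmetry: $\gamma_S-1-2=\gamma_S-3=n-1\notin S$? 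Use the symmetry $x\in S\iff \gamma_S-1-x\notin S$; with $x=\gamma_S-2=n\in S$ we get $\gamma_S-1-n=1\notin S$, good, and with $x=2$: $2\in S\iff \gamma_S-3\notin S$, and $\gamma_S-3=n-1$, which is odd and... hmm). For the reverse inclusion: the odd elements of $S$ are exactly $\{\gamma_S-1-x : x\text{ even},\ x\notin S\}$... The honest summary: using $\gamma_S=2|\NN\setminus S|$ and symmetry $x\leftrightarrow \gamma_S-1-x$, the semigroup $S$ has exactly $|\NN\setminus S|=n/2+1$ gaps; I would show the gaps are precisely the odd numbers $1,3,\dots,n-1$ together with — no, that is already $n/2$ gaps — plus one more, which the argument pins down to be impossible unless there are no even gaps below $n$, giving $S\supset 2\NN$ and $S=2\NN\cup(n+1+\NN)=\langle 2,n+1\rangle$.

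The main obstacle I anticipate is exactly the last step: turning the numerical data ($\gamma_S$ even, one fewer gap in $T$ than in $S$, $\gamma_T=\gamma_S-2$, symmetry of both $S$ and $T$) into the precise statement that $S$ contains every even number, equivalently has no even gaps. The cleanest route is probably: suppose $2m\notin S$ for some $2m<\gamma_S$; by symmetry $\gamma_S-1-2m\in S$ is odd. One then counts: $S$ has $\gamma_S/2$ gaps, the odd gaps pair off with odd non-gaps $<\gamma_S$ and even non-gaps pair off with even gaps under $x\mapsto\gamma_S-1-x$ (which swaps parity since $\gamma_S-1$ is odd), so \#\{even gaps\}=\#\{odd non-gaps $<\gamma_S$\} and \#\{odd gaps\}=\#\{even non-gaps $<\gamma_S$\}. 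Passing to $T=S\cup\{\gamma_S-1\}$ removes the single odd gap $\gamma_S-1$; symmetry of $T$ (with $\gamma_T=\gamma_S-2$, also even, conjugation $x\mapsto\gamma_S-3-x$ swapping parity) then forces a corresponding balance that is only consistent when $S$ had no even gaps at all, i.e. $S\supset 2\NN_{<\gamma_S}$ and hence $S=\langle 2,\gamma_S-1\rangle=\langle 2,n+1\rangle$. I would present this parity-counting argument carefully as the crux and keep the earlier reductions brief.
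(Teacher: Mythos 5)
Your reductions up to $\gamma_{M_S-M_S}=\gamma_S-2$ are exactly the paper's (Lemma~\ref{042} to get $M_S-M_S=S\cup\cbr{\gamma_S-1}$, then Lemma~\ref{059} applied to $S$ and to $M_S-M_S$). After that the proposal has both a genuine gap and a concrete error. The gap: the decisive step, that $S$ contains $2$ (equivalently has no even gaps), is never actually proved --- you flag it yourself as ``the main obstacle'' and offer only a sketched parity count whose conclusion (``forces a corresponding balance that is only consistent when\dots'') is asserted rather than derived. The paper closes this step in one line with Lemma~\ref{004}, which you never invoke: $\mu_{M_S}=\gamma_{M_S}-\gamma_{M_S-M_S}=\gamma_S-\br{\gamma_S-2}=2$, so $2$ is the least nonzero element of $S$ (Remark~\ref{107}), hence $2\NN\subset S$, all gaps are odd, and the gap count $\vbr{\ol S\setminus S}=\gamma_S/2$ from Lemma~\ref{059} forces the gaps to be precisely the odd numbers below $\gamma_S$, i.e.\ $S=\abr{2,\gamma_S+1}$. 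The error: you set $n+1=\gamma_S-1$. But $\gamma_S-1=\tau_S$ is the largest gap of $S$, so $S=\abr{2,n+1}$ with that $n$ is impossible; for $S=\abr{2,m}$ with $m$ odd the conductor is $m-1$, so the correct normalization is $n=\gamma_S$ (which is also why $n$ is even, by Lemma~\ref{059}).

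If you want to avoid Lemma~\ref{004} and finish from the two symmetries, your intended argument can be repaired cleanly: with $T=M_S-M_S=S\cup\cbr{\gamma_S-1}$ and $\gamma_T=\gamma_S-2$, symmetry of $T$ reads $x\in T\iff\gamma_S-3-x\notin T$; for $0\le x\le\gamma_S-3$ both $x$ and $\gamma_S-3-x$ differ from $\gamma_S-1$, so this becomes $x\in S\iff\gamma_S-3-x\notin S$, and comparing with the symmetry of $S$, $x\in S\iff\gamma_S-1-x\notin S$, gives $y\in S\iff y+2\in S$ for $0\le y\le\gamma_S-3$. Since $0\in S$ and $1\notin S$ (the case $S=\ol S$ being trivial), membership below $\gamma_S$ is determined by parity, which yields $S=\abr{2,\gamma_S+1}$ directly. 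That would be a legitimate alternative to the paper's use of Lemma~\ref{004}; as written, however, your proposal contains neither this argument nor the paper's, and its stated conclusion is mis-parametrized.
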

\begin{proof}
Suppose that $S \ne \ol S$ (see Lemma~\ref{037}), and that $M_S - M_S$ is symmetric. Since $M_S - M_S = S \cup \cbr{\tau_S}$ by Lemma~\ref{042}, Lemma~\ref{059} yields
\[
\gamma_{M_S - M_S} = 2\, \vbr{\ol S \setminus \br{M_S - M_S}} = 2\, \br{\vbr{\ol S \setminus S} - 1} = \gamma_S - 2.
\]
Thus, we have $2 = \gamma_S - \gamma_{M_S - M_S} = \mu_{M_S} \in M_S \subset S$ by Lemma~\ref{004}, and by Remark~\ref{107} the only smaller element in $S$ is $0$. This implies $S = \abr{2, \gamma_S+1}$, where $\gamma_S = 2 \vbr{\ol S \setminus S}$ is even.
\end{proof}


\begin{lemma}
\label{060}
Let $S$ be a local symmetric semigroup. If $M_S - M_S$ is symmetric, then $\alpha - \mu_{M_S} \in S$ for every $\alpha \in M_S \setminus C_S$.
\end{lemma}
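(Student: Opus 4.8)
My plan is to combine two facts valid under these hypotheses: the structural identity $M_S - M_S = S \cup \Delta\br{\tau_S}$ of Lemma~\ref{042}, and the symmetry of $M_S - M_S$. First I would dispose of the trivial case: if $S = \ol S$ then $C_S = S - \ol S = S - S = S$ by Remark~\ref{003}.\ref{003d}, so $M_S \setminus C_S = \emptyset$ and there is nothing to prove; hence I assume $S \ne \ol S$, so that Lemma~\ref{042} applies. I would then record the numerology. Since $S$ is local and $S \ne \ol S$ we have $\gamma_S \ne \zero$, and $\gamma_S \in C_S \subset S$ together with locality forbids a zero coordinate of $\gamma_S$, so $\gamma_S \geq \one$, $M_S = S^\one$, and $\gamma_{M_S} = \gamma_S$ (because $\gamma_S + \ol S \subset M_S \subset S$). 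By Lemma~\ref{004} applied to $E = F = M_S$ this gives $\gamma_{M_S - M_S} = \gamma_S - \mu_{M_S}$, hence $\tau_{M_S - M_S} = \tau_S - \mu_{M_S}$.

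The main step is to show $\alpha - \mu_{M_S} \in M_S - M_S$ for every $\alpha \in M_S$. Since $\tau_{M_S - M_S} - \br{\alpha - \mu_{M_S}} = \tau_S - \alpha$ and $M_S - M_S$ is symmetric, this is equivalent to $\Delta^{M_S - M_S}\br{\tau_S - \alpha} = \emptyset$. Using $M_S - M_S = S \cup \Delta\br{\tau_S}$, this set equals $\Delta^S\br{\tau_S - \alpha} \cup \br{\Delta\br{\tau_S - \alpha} \cap \Delta\br{\tau_S}}$. The first term vanishes because $\alpha \in M_S \subset S$ and $S$ is symmetric. For the second, I would suppose $\delta \in \Delta_i\br{\tau_S - \alpha} \cap \Delta_j\br{\tau_S}$: if $i = j$, comparing the defining equalities in coordinate $i$ yields $\alpha_i = 0$, which is impossible since $\alpha \in M_S$ and $S$ is local; if $i \ne j$, then $\delta_i = \br{\tau_S}_i - \alpha_i$ while $\delta_i > \br{\tau_S}_i$, forcing $\alpha_i < 0$, again impossible. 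Hence $\Delta^{M_S - M_S}\br{\tau_S - \alpha} = \emptyset$, so $\alpha - \mu_{M_S} \in M_S - M_S$.

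Finally, for $\alpha \in M_S \setminus C_S$ we now have $\alpha - \mu_{M_S} \in M_S - M_S = S \cup \Delta\br{\tau_S}$, and it remains to exclude $\alpha - \mu_{M_S} \in \Delta\br{\tau_S}$. If $\alpha - \mu_{M_S} \in \Delta_i\br{\tau_S}$, then $\br{\alpha - \mu_{M_S}}_i = \br{\gamma_S}_i - 1$ together with $\br{\mu_{M_S}}_i \geq 1$ (Remark~\ref{107}) gives $\alpha_i \geq \br{\gamma_S}_i$, while $\br{\alpha - \mu_{M_S}}_j > \br{\gamma_S}_j - 1$ for $j \ne i$ gives $\alpha_j > \br{\gamma_S}_j$; hence $\alpha \geq \gamma_S$, i.e.\ $\alpha \in \gamma_S + \ol S = C_S$, contradicting $\alpha \notin C_S$. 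Therefore $\alpha - \mu_{M_S} \in S$, which is the claim.

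I expect the delicate point to be the emptiness of $\Delta\br{\tau_S - \alpha} \cap \Delta\br{\tau_S}$ in the second paragraph: the argument hinges on correctly translating the hypothesis $\alpha \in M_S$ — namely $\alpha \geq \zero$ with no vanishing coordinate — into the incompatibility of the two systems of equalities and strict inequalities defining the relevant $\Delta_i$'s. Everything else is routine manipulation of conductors once the structure result Lemma~\ref{042} is available.
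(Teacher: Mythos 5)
Your proof is correct, and it shares the paper's overall skeleton but differs in one substantive step. The paper's proof is two lines: it quotes Proposition~\ref{012} to get stability of $M_S$, i.e.\ $M_S = \mu_{M_S} + \br{M_S - M_S}$, then applies Lemma~\ref{042} to rewrite this as $\br{\mu_{M_S} + S} \cup \Delta\br{\tau_S + \mu_{M_S}}$ and excludes the second set for $\alpha \notin C_S$ using $\mu_{M_S} \geq \one$. You prove the needed inclusion $M_S - \mu_{M_S} \subset M_S - M_S$ by hand instead of citing Proposition~\ref{012}: you use that a symmetric semigroup equals its normalized canonical ideal (Definition~\ref{019}), reduce membership of $\alpha - \mu_{M_S}$ in $M_S - M_S$ to the emptiness of $\Delta^{M_S - M_S}\br{\tau_S - \alpha}$ via the conductor identity $\tau_{M_S - M_S} = \tau_S - \mu_{M_S}$ (Lemma~\ref{004}), and then kill both pieces of $\Delta^{S \cup \Delta\br{\tau_S}}\br{\tau_S - \alpha}$ by the symmetry of $S$ and a coordinate case analysis using $\alpha \geq \one$. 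Your final exclusion of $\alpha - \mu_{M_S} \in \Delta\br{\tau_S}$ is the same argument as the paper's, shifted by $\mu_{M_S}$. The paper's route buys brevity by leaning on already-established machinery (your middle paragraph essentially re-proves the relevant half of Proposition~\ref{012} in this special case); yours buys self-containedness, makes the $\Delta$-set mechanism explicit, and also treats the vacuous case $S = \ol S$ explicitly, which the paper leaves implicit since Lemma~\ref{042} assumes $S \ne \ol S$. All the auxiliary numerology you record ($\gamma_S \geq \one$, $M_S = S^\one$, $\gamma_{M_S} = \gamma_S$) is justified correctly.
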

\begin{proof}
By Proposition~\ref{012} and Lemma~\ref{042} we have
\[
\alpha \in M_S = \mu_{M_S} + \br{M_S - M_S} = \mu_{M_S} + \br{S \cup \Delta \br{\tau_S}} = \br{\mu_{M_S} + S} \cup \Delta \br{\tau_S + \mu_{M_S}}.
\]
Assume that $\alpha \in \Delta \br{\tau_S + \mu_{M_S}}$. Then $\alpha \geq \gamma_S$ since $\mu_{M_S} \geq \one$ by Remark~\ref{107}. But this is a contradiction to the choice of $\alpha$. Hence, $\alpha - \mu_{M_S} \in S$.
\end{proof}


\begin{lemma}
\label{061}
Let $S$ be a local symmetric semigroup. If $M_S - M_S$ is symmetric, then for every $\alpha \in M_S \setminus C_S$ there is an $n \in \NN$ such that $\alpha = n \mu_{M_S}$. In particular, $S = \abr{\mu_{M_S}} \cup C_S$.
\end{lemma}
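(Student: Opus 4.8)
The plan is to iterate Lemma~\ref{060}. Fix $\alpha \in M_S \setminus C_S$, and record first the auxiliary observation that any translate $\alpha - k\mu_{M_S}$ (for $k \in \NN$) which still lies in $S$ and is non-zero automatically lies again in $M_S \setminus C_S$: such a translate is a non-zero element of $S$, hence in $M_S$, and if it were in $C_S = \gamma_S + \ol S$ then $\alpha - k\mu_{M_S} \geq \gamma_S$, so $\alpha \geq \gamma_S + k\mu_{M_S} \geq \gamma_S$ (using $\mu_{M_S} \geq \zero$), i.e.\ $\alpha \in C_S$, contradicting the choice of $\alpha$.

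Next I would run the iteration. From $\alpha \in M_S \setminus C_S$, Lemma~\ref{060} gives $\alpha - \mu_{M_S} \in S$; if it is non-zero, the observation places it back in $M_S \setminus C_S$, so Lemma~\ref{060} applies again and yields $\alpha - 2\mu_{M_S} \in S$, and so on. Thus $\alpha - k\mu_{M_S} \in S$ for $k = 0, 1, 2, \ldots$ up to (and only up to) the first $k$ at which the term becomes $\zero$. Since $S$ is local, $\mu_{M_S} \geq \one$ by Remark~\ref{107}, so every coordinate of $\alpha - k\mu_{M_S}$ strictly decreases with $k$; hence $\alpha - k\mu_{M_S} \notin \NN^I \supseteq S$ for large $k$, the iteration cannot run forever, and its only possible exit is $\alpha - n\mu_{M_S} = \zero$ for some $n \in \NN$. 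That is, $\alpha = n\mu_{M_S}$.

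For the final assertion I would check the two inclusions of $S = \abr{\mu_{M_S}} \cup C_S$ separately. The inclusion $\supseteq$ is immediate: $\abr{\mu_{M_S}} = \cbr{n\mu_{M_S} \mid n \in \NN} \subseteq S$ because $S$ is a submonoid of $\ZZ^I$ containing $\mu_{M_S}$, and $C_S = \gamma_S + \ol S \subseteq S$ by Remark~\ref{053}.\ref{053b} (applied with $E = S$, which is a good semigroup ideal of itself by Remark~\ref{032}.\ref{032c}). For $\subseteq$, let $\alpha \in S$: if $\alpha = \zero$ then $\alpha = 0 \cdot \mu_{M_S} \in \abr{\mu_{M_S}}$; if $\alpha \in C_S$ there is nothing to prove; otherwise $\alpha \in M_S \setminus C_S$, so $\alpha \in \abr{\mu_{M_S}}$ by the first part.

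I do not expect a genuine obstacle; the only point requiring care is the termination of the iteration, namely that it cannot stall at a term of $C_S$ (handled by the auxiliary observation) and cannot continue indefinitely (handled by $\mu_{M_S} \geq \one$, i.e.\ by locality of $S$).
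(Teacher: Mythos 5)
Your proposal is correct and follows essentially the same route as the paper: iterate Lemma~\ref{060}, using Remark~\ref{107} (so non-zero translates stay in $M_S$) and $C_S = \gamma_S + \ol S$ (so translates cannot land in $C_S$ without forcing $\alpha \in C_S$), and terminate via $\mu_{M_S} \geq \one$ and $S \subseteq \NN^I$ to conclude $\alpha = n\mu_{M_S}$. Your explicit auxiliary observation just spells out what the paper's proof cites implicitly, and the final decomposition $S = \abr{\mu_{M_S}} \cup C_S$ is handled the same way.
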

\begin{proof}
Let $\alpha \in M_S \setminus C_S$. Since $S = \cbr{\zero} \cup M_S$ by Remark~\ref{107}, and since $C_S = \gamma_S + \ol S$ by Remark~\ref{053}.\ref{053b}, repeatedly applying Lemma~\ref{060} yields $\alpha - m \mu_{M_S} \in S$ for all $m \in \NN$ satisfying $\br{m - 1} \mu_{M_S} < \alpha$. Since $\alpha$ is finite, there is $n = \max \cbr{m \in \NN \mid \br{m - 1} \mu_{M_S} < \alpha}$. Then $\alpha - n \mu_{M_S} \in S$ implies $\alpha - n \mu_{M_S} \geq \zero$. Hence, $n \mu_{M_S} \not< \alpha$ yields $\alpha = n \mu_{M_S}$. The particular claim follows since $S = \cbr{\zero} \cup M_S$ by Remark~\ref{107}.
\end{proof}


\begin{lemma}
\label{062}
Let $S$ be a local symmetric semigroup. If $M_S - M_S$ is symmetric, then $\vbr{I} \leq 2$.
\end{lemma}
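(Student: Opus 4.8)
The plan is to argue by contradiction: I will assume $\vbr{I} \geq 3$ and play the symmetry of $S$ against the explicit shape $S = \abr{\mu_{M_S}} \cup C_S$ from Lemma~\ref{061}. Write $\mu = \mu_{M_S}$, so $\mu \geq \one$ by Remark~\ref{107}, and recall $C_S = \gamma_S + \ol S = \cbr{\alpha \in \ZZ^I \mid \alpha \geq \gamma_S}$. The first step is to pin down $\tau_S$. Since $S$ is its own canonical ideal (Definition~\ref{019}.\ref{019b}), one has $\tau_S \in S$ precisely when $\Delta^S(\zero) = \emptyset$; and for $\vbr{I} \geq 2$ this holds because every element of $\Delta(\zero)$ is nonzero yet has a vanishing coordinate, whereas by Remark~\ref{107} the only element of $S$ with a vanishing coordinate is $\zero$. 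Hence $\tau_S \in S$, and since $\tau_S = \gamma_S - \one \not\geq \gamma_S$ we get $\tau_S \notin C_S$, so Lemma~\ref{061} forces $\tau_S = n\mu$ for some $n \in \NN$, whence $\gamma_S = n\mu + \one$.

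The second step is to evaluate symmetry at $\alpha := \tau_S + \ee_1 = n\mu + \ee_1$. As $\tau_S - \alpha = -\ee_1$, I would verify $\Delta^S(-\ee_1) = \emptyset$: the summand $\Delta_1(-\ee_1)$ meets $S \subset \ol S$ in nothing, since its members have negative first coordinate; and for $i \ne 1$ the set $\Delta_i(-\ee_1)$ forces the $i$-th coordinate to vanish, so its only conceivable member of $S$ is $\zero$ — yet $\zero$ violates the defining inequality of $\Delta_i(-\ee_1)$ at any index $k \in I \setminus \cbr{1,i}$ (where a positive coordinate is demanded), and such a $k$ exists precisely because $\vbr{I} \geq 3$. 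Thus $\Delta^S(\tau_S - \alpha) = \emptyset$, so symmetry of $S$ yields $\alpha \in S$.

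Finally I would contradict $\alpha \in S$. Because $\vbr{I} \geq 2$, $\alpha = n\mu + \ee_1 \not\geq n\mu + \one = \gamma_S$, so $\alpha \notin C_S$; Lemma~\ref{061} then gives $\alpha = m\mu$ for some $m \in \NN$, i.e.\ $(m - n)\mu = \ee_1$. This is impossible, since $\mu \geq \one$ makes every positive multiple of $\mu$ have all coordinates $\geq 1$ and every nonpositive multiple $\leq \zero$, while $\ee_1$ is of neither kind. Hence $\vbr{I} \leq 2$. I expect the only step needing real care to be the verification that $\Delta^S(-\ee_1) = \emptyset$, which is exactly where $\vbr{I} \geq 3$ is used; everything else is a direct application of Lemma~\ref{061}, Remark~\ref{107}, and the definition of a symmetric good semigroup.
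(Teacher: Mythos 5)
Your proof is correct and in essence follows the paper's route: the element $\tau_S + \ee_1$ you test lies in $\ol\Delta\br{\tau_S} \setminus \Delta\br{\tau_S}$, and your verification that $\Delta^S\br{-\ee_1} = \emptyset$ via locality (Remark~\ref{107}) is exactly the argument behind Lemma~\ref{040}, which the paper simply cites before contradicting Lemma~\ref{061}. The only difference is that you inline that special case and spell out the contradiction explicitly as $\br{m-n}\mu_{M_S} = \ee_1$ (together with the auxiliary observation $\tau_S \in S$), whereas the paper leaves these details implicit; both steps check out.
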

\begin{proof}
If $\vbr{I} \geq 3$, then $\ol \Delta \br{\tau_S} \setminus \Delta \br{\tau_S} \subset S$ contradicts Lemma~\ref{061}.
\end{proof}


\begin{proposition}
\label{063}
Let $S$ be a local symmetric semigroup with $\vbr{I} = 2$. If $M_S - M_S$ is symmetric, then $\mu_{M_S} = \one$. Moreover, there is an $n \in 2 \NN + 1$ such that
\[
S = \abr{\one} \cup \br{\br{\frac{n+1}{2}}_{i \in I} + \NN^I}.
\]
\end{proposition}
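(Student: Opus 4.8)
The plan is to combine the explicit ``line plus box'' description of $S$ furnished by Lemma~\ref{061} with the stability of $M_S$ and the concrete shape of $M_S - M_S$. Since $S$ is local with $\vbr{I} = 2$, the minimal element $\mu_{M_S}$ has no zero coordinate (otherwise $\zero \in M_S$), so I would write $\mu_{M_S} = (a,b)$ with $a,b \geq 1$ and $\gamma_S = (\gamma_1,\gamma_2)$, recording that $\gamma_S \geq \mu_{M_S} \geq \one$ and that $C_S \subseteq M_S$ by Remark~\ref{107}. By Lemma~\ref{061}, $S = \abr{\mu_{M_S}} \cup C_S$ with $C_S = \gamma_S + \NN^I$ (Remark~\ref{053}.\ref{053b}), so $M_S = \br{\abr{\mu_{M_S}} \setminus \cbr{\zero}} \cup C_S$ and $\mu_{M_S} + S = \br{\abr{\mu_{M_S}} \setminus \cbr{\zero}} \cup \br{\mu_{M_S} + C_S}$. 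On the other hand, $M_S - M_S$ being symmetric makes $M_S$ stable by Proposition~\ref{012}, hence $M_S = \mu_{M_S} + \br{M_S - M_S}$; and $S$ is symmetric with $S \ne \ol S$ (a local good semigroup with $\vbr{I} \geq 2$ is never $\NN^I$, by Lemma~\ref{068}), so Lemma~\ref{042} gives $M_S - M_S = S \cup \Delta\br{\tau_S}$. Translating, $M_S = \br{\mu_{M_S} + S} \cup \Delta\br{\tau_S + \mu_{M_S}}$.

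Next, put $\beta = \tau_S + \mu_{M_S} = \gamma_S + \mu_{M_S} - \one$, so that $\beta \geq \gamma_S$, and set $B = \br{\beta + \NN^I} \setminus \cbr{\beta}$. A routine computation with the $\Delta$-sets of Definition~\ref{039} shows $\Delta\br{\beta} \subset C_S$ and $\br{\mu_{M_S} + C_S} \cup \Delta\br{\beta} = B$, and clearly $B \subset C_S$. Equating the two descriptions $M_S = \br{\abr{\mu_{M_S}} \setminus \cbr{\zero}} \cup C_S = \br{\abr{\mu_{M_S}} \setminus \cbr{\zero}} \cup B$ and intersecting with the complement of $\abr{\mu_{M_S}}$ forces $C_S \setminus B \subset \abr{\mu_{M_S}}$. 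This is the step that is not merely formal: if $a \geq 2$, then $C_S \setminus B$ contains the entire infinite strip $\cbr{(\gamma_1, y) \mid y \geq \gamma_2}$, since such points have first coordinate $\gamma_1 < \gamma_1 + a - 1 = \beta_1$ and hence lie outside $B$; but $\abr{\mu_{M_S}} = \cbr{k\mu_{M_S} \mid k \in \NN}$ meets every line of constant first coordinate in at most one point, a contradiction. Hence $a = 1$, and by the symmetric argument $b = 1$, i.e.\ $\mu_{M_S} = \one$.

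With $\mu_{M_S} = \one$ we get $\beta = \gamma_S$, so $B = C_S \setminus \cbr{\gamma_S}$ and therefore $C_S \setminus B = \cbr{\gamma_S} \subset \abr{\one}$, which forces $\gamma_S = c\one$ for some $c \in \NN$; since $\gamma_S \geq \one$, in fact $c \geq 1$. Consequently $S = \abr{\one} \cup \br{c\one + \NN^I} = \abr{\one} \cup \br{\br{\frac{n+1}{2}}_{i \in I} + \NN^I}$ with $n = 2c - 1 \in 2\NN + 1$ and $\vbr{I} = 2$, as claimed. I expect the only genuine obstacle to be the strip argument in the second paragraph — the observation that the discrete line $\abr{\mu_{M_S}}$ cannot contain an infinite coordinate strip; everything else is bookkeeping with the translation and $\Delta$-set identities already codified in Sections~\ref{109}--\ref{110}.
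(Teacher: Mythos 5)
Your proof is correct, and it reaches both conclusions by a mechanism genuinely different from the paper's, although it draws on the same preparatory results (Remark~\ref{107}, Lemma~\ref{042}, Proposition~\ref{012}, and crucially Lemma~\ref{061}). The paper pins down $\mu_{M_S} = \one$ by comparing conductors: Lemma~\ref{004} gives $\gamma_{M_S - M_S} \leq \tau_S$, and a strict inequality would force $\ol{\Delta}_i\br{\tau_S - \ee_i} \subset M_S - M_S = S \cup \Delta\br{\tau_S}$, contradicting Lemma~\ref{061}; it then proves that $\gamma_S$ is a multiple of $\one$ by a separate argument using Lemma~\ref{064} and property~\ref{E1}. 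You instead equate two explicit descriptions of $M_S$ --- $\br{\abr{\mu_{M_S}} \setminus \cbr{\zero}} \cup C_S$ from Lemma~\ref{061} and Remark~\ref{107}, and $\br{\abr{\mu_{M_S}} \setminus \cbr{\zero}} \cup B$ with $B = \br{\tau_S + \mu_{M_S} + \NN^I} \setminus \cbr{\tau_S + \mu_{M_S}}$ from stability (Proposition~\ref{012}) and Lemma~\ref{042} --- and read off $C_S \setminus B \subset \abr{\mu_{M_S}}$. Your strip argument (an infinite coordinate strip inside $C_S$ cannot lie on the discrete ray $\abr{\mu_{M_S}}$) is the same combinatorial phenomenon the paper exploits when it observes $\ol{\Delta}_i\br{\tau_S - \ee_i} \not\subset S$, but your packaging makes the second half essentially free: once $\mu_{M_S} = \one$, the single leftover point of $C_S \setminus B$ is $\gamma_S$, which must then be a multiple of $\one$; this replaces the paper's Lemma~\ref{064}/property~\ref{E1} infimum argument and is shorter. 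Two minor points, neither affecting correctness: your parenthetical reason that $\mu_{M_S}$ has no zero coordinate (\enquote{otherwise $\zero \in M_S$}) is not quite the right justification --- it follows directly from $M_S = S^{\one}$, i.e.\ Remark~\ref{107}, which you also cite --- and the identity $\br{\mu_{M_S} + C_S} \cup \Delta\br{\tau_S + \mu_{M_S}} = B$ genuinely uses $\vbr{I} = 2$, so it deserves an explicit mention.
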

\begin{proof}
We may suppose that $I = \cbr{1,2}$. Since $\mu_{M_S} \geq \one$ by Remark~\ref{107}, Lemma~\ref{004} yields $\gamma_{M_S - M_S} = \gamma_{M_S} - \mu_{M_S} \leq \gamma_S - \one = \tau_S$ (note that $S \ne \ol S$ by Remark~\ref{107}). Assume that $\gamma_{M_S - M_S} < \tau_S$. Then $\ol \Delta_i \br{\tau_S - \ee_i} \subset M_S - M_S$ for an $i \in I$. However, $\ol \Delta_i \br{\tau_S - \ee_i} \not\subset S$ by Lemma~\ref{061}, and $\ol \Delta_i \br{\tau_S - \ee_i} \cap \Delta \br{\tau_S} = \emptyset$ by Definition~\ref{039}. So with Lemma~\ref{042} we obtain the contradiction $\ol \Delta_i \br{\tau_S - \ee_i} \not\subset S \cup \Delta \br{\tau_S} = M_S - M_S$. Therefore, $\gamma_{M_S - M_S} = \tau_S$, and Lemma~\ref{004} yields $\mu_{M_S} = \gamma_S - \gamma_{M_S - M_S} = \gamma_S - \tau_S = \one$. Thus, we have $S = \abr{\one} \cup C_S$ by Lemma~\ref{061}.

Without loss of generality we may assume that $\br{\gamma_S}_1 \leq \br{\gamma_S}_2$. Let $\alpha \in \NN$ with $\alpha \geq \br{\gamma_S}_1$. Then Lemma~\ref{064} implies $\br{\alpha, \alpha} + \NN \ee_1 \subset S$. Let now $m \in \NN$. Then $\br{\alpha + m, \alpha + m} \in S$. Moreover, $\alpha \geq \br{\gamma_S}_1$ and $\max \cbr{\alpha + m, \br{\gamma_S}_2} \geq \br{\gamma_S}_2$ imply $\br{\alpha, \max \cbr{\alpha + m, \br{\gamma_S}_2}} \in S$. Since $S$ satisfies property~\ref{E1}, this yields
\[
\br{\alpha, \alpha + m} = \inf \br{\br{\alpha + m, \alpha + m}, \br{\alpha, \max \cbr{\alpha + m, \br{\gamma_S}_2}}} \in S,
\]
and hence $\br{\alpha,\alpha} + \NN \ee_2 \subset S$. Since also $\br{\alpha, \alpha} + \NN \ee_1 \subset S$ by Lemma~\ref{064}, we obtain $\br{\br{\gamma_S}_1, \br{\gamma_S}_1} + \ol S \subset S$, and hence $\gamma_S = \br{\br{\gamma_S}_1, \br{\gamma_S}_1}$.

Therefore, setting $n = 2 \br{\gamma_S}_1 - 1 \in 2 \NN + 1$ we obtain
\[
S = \abr{\one} \cup C_S = \abr{\one} \cup \br{\br{\frac{n+1}{2},\frac{n+1}{2}} + \NN^I}
\]
(see Remark~\ref{053}.\ref{053b}).
\end{proof}


\begin{proof}[Proof of Theorem~\ref{023}]\
\begin{description}[font=\normalfont]
\item[\ref{023a}$\implies$\ref{023b}]
Suppose that any good semigroup $S'$ with $S \subset S' \subset \ol S$ is symmetric. Then, in particular, $S$ is symmetric. Therefore, $M_S - M_S = S - M_S$ (see Remark~\ref{107} and Lemma~\ref{038}) is a good semigroup by Lemma~\ref{037}. Since $S \subset M_S - M_S \subset \ol S$ by Lemma~\ref{037}, it is symmetric by assumption.
\item[\ref{023b}$\implies$\ref{023c}]
Suppose that $S$ and $M_S - M_S$ are symmetric semigroups. Then $\vbr{I} \leq 2$ by Lemma~\ref{062}. The statement follows from Propositions~\ref{057} and \ref{063}.
\item[\ref{023c}$\implies$\ref{023a}]
If $S = \abr{2, n+1}$ for some $n \in 2 \NN$, the proof is in \cite[Lemma~1.15]{HK71}.

So suppose that $S = \abr{\one} \cup \br{\br{\frac{n+1}{2}}_{i \in I} + \NN^{I}}$ for some $n \in 2 \NN + 1$. Let $S'$ be a good semigroup with $S \subset S' \subset \ol S$. Since $S'$ has to satisfy properties \ref{E1} and \ref{E2}, it is easy to see that there is an $m \in 2 \NN + 1$ such that $S' = \abr{\one} \cup \br{\br{\frac{m+1}{2}}_{i \in I} + \ol S}$ (see Figure~\ref{065}).
\qedhere
\end{description}
\end{proof}


\begin{figure}
\begin{center}
\begin{tikzpicture}[scale=0.5,inner sep=2.5]
\draw[->] (0,0) -- (17,0);
\foreach \i in {1,3,5,7,9,11} \draw (\i,0) node[shape=circle,draw,fill=white] {};

\foreach \i in {0,2,4,6,8,10,12,13,14,15,16} \draw (\i,0) node[shape=circle,draw,fill] {};

\draw (6.5,0.5) node[above] {$S$};
\end{tikzpicture}\\\medskip
\begin{tikzpicture}[scale=0.5,inner sep=2.5]
\draw[->] (0,0) -- (17,0);
\foreach \i in {1,3,5,7} \draw (\i,0) node[shape=circle,draw,fill=white] {};

\foreach \i in {0,2,4,6,8,9,10,11,12,13,14,15,16} \draw (\i,0) node[shape=circle,draw,fill] {};

\draw (6.5,0.5) node[above] {$S'$};
\end{tikzpicture}
\end{center}
\caption{The symmetric numerical semigroup $S = \abr{2, 6 +1}$. Every good semigroup $S'$ with $S \subset S' \subset \ol S$ is of the same form, e.g.\ $S' = \abr{2, 4 + 1}$.}
\label{066}
\end{figure}


\begin{figure}
\begin{center}
\begin{tikzpicture}[scale=0.5,inner sep=2.5]
\draw[->] (0,0) -- (11,0);
\draw[->] (0,0) -- (0,9);
\foreach \i in {0,...,10} \foreach \j in {0,...,8} \draw (\i,\j) node[shape=circle,draw,fill=white] {};

\foreach \i in {0,1,2,3,4} \draw (\i,\i) node[shape=circle,draw,fill] {};
\foreach \i in {5,6,7,8,9,10} \foreach \j in {5,6,7,8} \draw (\i,\j) node[shape=circle,draw,fill] {};

\draw (6.5,8.5) node[above] {$S$};
\end{tikzpicture}
\qquad
\begin{tikzpicture}[scale=0.5,inner sep=2.5]
\draw[->] (0,0) -- (11,0);
\draw[->] (0,0) -- (0,9);
\foreach \i in {0,...,10} \foreach \j in {0,...,8} \draw (\i,\j) node[shape=circle,draw,fill=white] {};

\foreach \i in {0,1,2} \draw (\i,\i) node[shape=circle,draw,fill] {};
\foreach \i in {3,4,5,6,7,8,9,10} \foreach \j in {3,4,5,6,7,8} \draw (\i,\j) node[shape=circle,draw,fill] {};

\draw (6.5,8.5) node[above] {$S'$};
\end{tikzpicture}
\end{center}
\caption{Every good semigroup containing $S = \abr{\br{1,1}} \cup \br{\br{\frac{9+1}{2},\frac{9+1}{2}} + \NN^2}$ and any point of $\ol \Delta \br{\br{3,3}} \setminus \cbr{\br{3,3}}$ must also contain the good semigroup $S' = \abr{\br{1,1}} \cup \br{\br{\frac{5+1}{2},\frac{5+1}{2}} + \NN^2}$ in order to satisfy properties~\ref{E1} and \ref{E2}.}
\label{065}
\end{figure}


\section{Gorenstein Algebroid Curves}
\label{113}


We conclude the paper with the following characterization of Gorenstein admissible rings which only have Gorenstein integral extensions in their ring of fractions. The equivalence of statements \ref{026a}, \ref{026c}, and \ref{026c} in Theorem~\ref{026} was proved for irreducible admissible rings already in \cite[Proposition 2.22]{HK71}.


\begin{theorem}
\label{026}
Let $R$ be a local admissible ring with maximal ideal $\fracid{m}_R$. The following are equivalent.
\begin{equivalence}
\item\label{026a}
Every integral extension of $R$ in its total ring of fractions $Q_R$ is Gorenstein.
\item\label{026b}
$R$ and $\fracid{m}_R : \fracid{m}_R$ are Gorenstein.
\item\label{026e}
$R$ is Gorenstein and has maximal embedding dimension.
\item\label{026f}
$\Gamma_R$ and $M_{\Gamma_R} - M_{\Gamma_R}$ are symmetric.
\item\label{026c}
There is an $n \in 2 \NN$ such that
\[
\Gamma_R = \abr{2, n+1},
\]
or there is an $n \in 2 \NN + 1$ such that
\[
\Gamma_R = \abr{\one} \cup \br{\br{\frac{n+1}{2}}_{V \in \VR_R} + \NN^{\VR_R}},
\]
where $\vbr{\VR_R} = 2$.
\end{equivalence}

If $R$ contains a field, and if $\KK = R / \fracid{m}_R$ is algebraically closed, then the above statements are equivalent to the following.
\begin{equivalence}[resume]
\item\label{026d}
There is a surjective $\KK$-algebra homomorphism
\[
\phi \colon \KK \ssbr{x,y} \to \wh R
\]
with
\[
\ker \phi = \abr{x^{n+1} - y^2}
\]
for some $n \in \NN$ (note that this is the same $n$ as in \ref{026c}).
\end{equivalence}
\end{theorem}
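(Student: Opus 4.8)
The plan is to prove Theorem~\ref{026} by first establishing the equivalence of the purely ring-theoretic and combinatorial conditions \ref{026a}--\ref{026c}, and then treating the final complete-intersection statement separately under the extra hypotheses. For the core equivalences, the strategy is to shuttle between rings and their value semigroups via Proposition~\ref{020} (Gorenstein $\Leftrightarrow$ symmetric) and Proposition~\ref{029}, which tells us that integral extensions of $R$ in $Q_R$ have the same valuations, so their value semigroups are exactly the good semigroups $S'$ with $\Gamma_R \subset S' \subset \ol{\Gamma_R}$. This immediately makes \ref{026a} $\Leftrightarrow$ \ref{023a} of Theorem~\ref{023} and \ref{026f} $\Leftrightarrow$ \ref{023b}; and \ref{026c} is literally \ref{023c} with $I = \VR_R$. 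So the combinatorial Theorem~\ref{023} does most of the work, giving \ref{026a} $\Leftrightarrow$ \ref{026f} $\Leftrightarrow$ \ref{026c}.

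The remaining implications among \ref{026a}--\ref{026e} are: First, \ref{026b} is the special case of Theorem~\ref{001} applied to $\fracid I = \fracid m_R$, combined with the fact that $\fracid m_R : \fracid m_R$ is the blowup/endomorphism ring; by Theorem~\ref{001}, Gorensteinness of $\fracid m_R : \fracid m_R$ is equivalent to stability of $\fracid m_R$ together with symmetry of $\Gamma_{\fracid m_R} - \Gamma_{\fracid m_R}$. Adding that $R$ itself is Gorenstein (i.e.\ $\Gamma_R$ symmetric), we recover exactly \ref{026f}, so \ref{026b} $\Leftrightarrow$ \ref{026f}. Next, \ref{026e}: by Lipman's result (cited after Proposition~\ref{104}, and \cite{amer.j.math.93.649}), $R$ has maximal embedding dimension iff $\fracid m_R$ is stable; so \ref{026e} says $R$ Gorenstein and $\fracid m_R$ stable. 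By Proposition~\ref{011} stability of $\fracid m_R$ forces $\Gamma_{\fracid m_R}$ stable and $\Gamma_{\fracid m_R : \fracid m_R} = \Gamma_{\fracid m_R} - \Gamma_{\fracid m_R}$; combined with symmetry of $\Gamma_R$ and Proposition~\ref{012} (stable $\Rightarrow$ $E-E$ symmetric once $E$ is a canonical ideal of $E-E$, which holds here because in a Gorenstein ring $\fracid m_R$ is a canonical ideal of its endomorphism ring by Proposition~\ref{034}), we again land on \ref{026f}. Conversely \ref{026f} gives stability of $\Gamma_{\fracid m_R}$ by Proposition~\ref{012}, and then one checks $\fracid m_R$ is stable: here one uses that for a Gorenstein (hence, by \ref{026c}, ``almost nice'') ring, stability of the value semigroup ideal lifts to stability of $\fracid m_R$ — this can be seen directly from the explicit form of $\Gamma_R$ in \ref{026c}, or by invoking \ref{026b} $\Leftrightarrow$ \ref{026f} and Theorem~\ref{001}.

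For the last part, assume $R$ contains a field with $\KK = R/\fracid m_R$ algebraically closed, so $\wh R$ is an (admissible) algebroid curve over $\KK$ by Proposition~\ref{076} and has the same value semigroup as $R$. The plan: \ref{026d} $\Rightarrow$ \ref{026c} is a direct computation — for $A = \KK\ssbr{x,y}/\abr{x^{n+1}-y^2}$ one reads off $\Gamma_A$ from a parametrization. If $n+1 = 2m$ is even, $A$ is reducible ($x^{n+1}-y^2 = (x^m-y)(x^m+y)$), giving two branches both isomorphic to $\KK\ssbr{t}$ glued so that $\VR_A$ has two elements and $\Gamma_A = \abr{\one} \cup (\,(m)_{i}+\NN^2\,)$ with $n = 2m-1$ odd — matching the second case. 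If $n+1$ is odd, $A$ is irreducible with normalization $\KK\ssbr{t}$, parametrized by $x = t^2$, $y = t^{n+1}$, so $\Gamma_A = \abr{2, n+1}$ with $n$ even — the first case. For the converse \ref{026c} $\Rightarrow$ \ref{026d}, one uses that $\wh R$ is determined up to isomorphism by a parametrization (Proposition~\ref{078}); in the irreducible case $\Gamma_R = \abr{2, n+1}$ forces $\wh R \cong \KK\ssbr{t^2, t^{n+1}} \cong \KK\ssbr{x,y}/\abr{x^{n+1}-y^2}$, and in the two-branch case the explicit semigroup forces $\wh R \cong \KK\ssbr{(t,t),(t^m,-t^m)}$ or similar, which is again $\KK\ssbr{x,y}/\abr{x^{n+1}-y^2}$ after a coordinate change. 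The main obstacle I anticipate is precisely this last step: pinning down the isomorphism type of $\wh R$ from its value semigroup in the reducible case, since in general the value semigroup does not determine the ring — one must exploit that the two branches are forced to be lines and that $\KK$ is algebraically closed to normalize the gluing, and carefully track which of the even/odd cases for $n$ corresponds to which geometric picture.
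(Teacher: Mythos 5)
Your overall architecture is the same as the paper's (reduce the core equivalences to the combinatorial Theorem~\ref{023}, handle \ref{026b} and \ref{026e} via Theorem~\ref{001} and Lipman's stability criterion, and treat \ref{026d} by explicit parametrization), but two of your bridges between rings and semigroups have genuine gaps. First, you claim that the value semigroups of integral extensions of $R$ in $Q_R$ are \emph{exactly} the good semigroups $S'$ with $\Gamma_R \subset S' \subset \ol{\Gamma_R}$, and that this makes \ref{026a}$\Leftrightarrow$\ref{023a} \enquote{immediate}. Proposition~\ref{029} together with Proposition~\ref{091} gives only one inclusion: every integral extension produces such an $S'$; the converse realizability statement is never established (and is exactly the kind of statement that fails for good semigroups in general), so only \ref{023a}$\Rightarrow$\ref{026a} is immediate. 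The direction out of \ref{026a} should instead go through the trivial observation that $R$ and $\fracid m_R : \fracid m_R$ are themselves integral extensions, i.e.\ \ref{026a}$\Rightarrow$\ref{026b}; the paper closes the cycle \ref{026a}$\Rightarrow$\ref{026b}$\Rightarrow$\ref{026f}$\Rightarrow$\ref{026c}$\Rightarrow$\ref{026a}. Second, your converse \ref{026f}$\Rightarrow$\ref{026b} (and likewise towards \ref{026e}) is not proved: stability of $\Gamma_{\fracid m_R}$ does not lift to stability of $\fracid m_R$, since Proposition~\ref{011} additionally requires $\Gamma_{\fracid m_R : \fracid m_R} = \Gamma_{\fracid m_R} - \Gamma_{\fracid m_R}$, which can fail (Example~\ref{093}); your suggested fixes are either vague (\enquote{seen directly from the explicit form of $\Gamma_R$}) or circular (\enquote{by invoking \ref{026b}$\Leftrightarrow$\ref{026f}}). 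The paper never proves this implication directly: it reaches \ref{026b} from \ref{026f} only via \ref{026c}$\Rightarrow$\ref{026a}, and it handles \ref{026b}$\Leftrightarrow$\ref{026e} by citing Goto--Matsuoka--Phuong. (Your alternative for \ref{026e}$\Rightarrow$\ref{026b} is salvageable, but the fact that $\fracid m_R$ is a canonical ideal of $\fracid m_R : \fracid m_R$ when $R$ is Gorenstein needs Lemma~\ref{035} applied to the canonical ideal $R$ together with $R : \br{\fracid m_R : \fracid m_R} = \fracid m_R$ in the non-regular case, not Proposition~\ref{034}, which presupposes a known canonical ideal of the endomorphism ring.) Your solid pieces, \ref{026b}$\Rightarrow$\ref{026f} via Theorem~\ref{001} and \ref{026f}$\Rightarrow$\ref{026c}$\Rightarrow$\ref{026a} via Theorem~\ref{023} and Proposition~\ref{020}, do match the paper, with Theorem~\ref{001} replacing the paper's direct computation $\Gamma_{\fracid m_R : \fracid m_R} = \Gamma_R - M_{\Gamma_R} = M_{\Gamma_R} - M_{\Gamma_R}$.

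For \ref{026c}$\Rightarrow$\ref{026d} you correctly identify the obstacle (a value semigroup does not determine the ring) but leave it unresolved, and this is the substantive step. The paper's mechanism is concrete: pick an element of value $2$ (resp.\ $\br{1,1}$), use that $\KK$ is algebraically closed to renormalize the uniformizing parameter(s) so that this element becomes $t^2$ (resp.\ $\br{t_1,t_2}$), note that $t^{\gamma_{\Gamma_R}+1}$ (resp.\ $\br{t_1^\gamma, -t_2^\gamma}$) lies in the conductor $\fracid C_R$ by Proposition~\ref{082} and hence in $R$, and then compare $R$ with the explicit subring generated by these elements: both have the same value semigroup and the same conductor, so Lemma~\ref{081}.\ref{081b} forces equality. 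Without an argument of this type in both the irreducible and the two-branch case, your final implication remains a plan rather than a proof; the remaining piece, \ref{026d}$\Rightarrow$\ref{026c} by computing the value semigroup of $\KK \ssbr{x,y} / \abr{x^{n+1}-y^2}$, is fine.
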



\begin{proof}\
\begin{description}[font=\normalfont]
\item[\ref{026a}$\implies$\ref{026b}]
This follows trivially as $R$ and $\fracid{m}_R : \fracid{m}_R$ are integral extensions of $R$.
\item[\ref{026b}$\implies$\ref{026f}]
If $R$ is Gorenstein, then $\Gamma_R$ is symmetric by Proposition~\ref{020}. Hence, Remark~\ref{102}.\ref{102c}, Proposition~\ref{086}, Theorems~\ref{016}.\ref{016b} and \ref{105}.\ref{105c}, and Lemma~\ref{038} yield
\[
\Gamma_{\fracid{m}_R : \fracid{m}_R} = \Gamma_{R : \fracid{m}_R} = \Gamma_R - \Gamma_{\fracid{m}_R} = \Gamma_R - M_{\Gamma_R} = M_{\Gamma_R} - M_{\Gamma_R}.
\]
Therefore, $\Gamma_R$ and $M_{\Gamma_R} - M_{\Gamma_R}$ are symmetric by Propositions~\ref{029}.\ref{029a} and \ref{020}.
\item[\ref{026f}$\implies$\ref{026c}]
This follows from Theorem~\ref{023}.
\item[\ref{026c}$\implies$\ref{026a}]
Suppose that
\[
\Gamma_R = \abr{2, n+1},
\]
for some $n \in 2 \NN$ or
\[
\Gamma_R = \abr{\one} \cup \br{\br{\frac{n+1}{2}}_{V \in \VR_R} + \NN^{\VR_R}}
\]
for some $n \in 2 \NN + 1$. Let $A$ be an integral extension of $R$ in $Q_R$. Then $A$ is an admissible ring by Proposition~\ref{029}.\ref{029a}, and $A \in \RFI_R$ with $\Gamma_R \subset \Gamma_A \subset \ol{\Gamma_R}$ (see Remark~\ref{022}.\ref{022a}). Thus, $\Gamma_A$ is symmetric by Theorem~\ref{023} (also see Proposition~\ref{029}.\ref{029a}), and hence $A$ is Gorenstein by Proposition~\ref{020}.
\item[\ref{026b}$\implies$\ref{026e}]
If $\fracid{m}_R : \fracid{m}_R$ is Gorenstein, then $R$ has maximal embedding dimension by \cite[Theorem~5.1]{j.alg.379.355}.
\item[\ref{026e}$\implies$\ref{026b}]
If $R$ is Gorenstein, then it is almost Gorenstein, see \cite[p.~364]{j.alg.379.355}. Since $R$ has maximal embedding dimension, also $\fracid{m}_R : \fracid{m}_R$ is Gorenstein by \cite[Theorem~5.1]{j.alg.379.355}.
\end{description}
\medskip
Now suppose that $R$ contains a field, and that $\KK = R / \fracid{m}_R$ is algebraically closed. By \cite[Corollary of Theorem~56]{matsumura1980} $\wh R$ is local with residue field $\KK$. Since $\KK$ is algebraically closed, it is infinite. In particular, this means $\vbr{\KK} \geq \vbr{\Min \br{\wh R}}$. By \cite[Corollary~2.1.8]{bruns1998} we have $\dim \wh R = \dim R$. In particular, $\wh R$ is equidimensional. Moreover, $\wh R$ is Noetherian by \cite[Theorem~10.26]{atiyah1969}, and it is reduced since $R$ is admissible. Finally, $\wh R$ is a $\KK$-algebra by the Cohen Structure Theorem (see \cite[Theorem~7.7]{eisenbud1995}). Thus, $\wh R$ is an algebroid curve over $R$. Since $\Gamma_R = \Gamma_{\wh R}$ by \cite[Theorem~3.3.5]{j.commut.algebra.11.81}, we may assume that $R$ is an algebroid curve over $\KK$.
\begin{description}[font=\normalfont]
\item[\ref{026c}$\implies$\ref{026d}]
First suppose that there is an $n \in 2 \NN$ such that $\Gamma_R = \abr{2, n+1}$. We identify $R \subset Q_R = \KK \ssbr{t} \sbr{t^{-1}}$ (see Proposition~\ref{078}). Since $2 \in \Gamma_R$, there is $x \in R$ with $\nu_R \br{x} = 2$. Then by Proposition~\ref{075}.\ref{075b} there is an $a \in \br{\KK \ssbr{t}}^\ast$ such that $x = at^2$. Since $\KK$ is algebraically closed, there is a $b \in \br{\KK \ssbr{t}}^\ast$ such that $b^2 = a$, and hence $x = \br{b t}^2$. By Propositions~\ref{075}.\ref{075a} and \ref{075b} and \ref{078} $b t$ is a uniformizing parameter for $\KK \ssbr{t}$. Therefore, we may by Proposition~\ref{078} assume that $x = t^2$.

Since $\fracid{C}_R = t^{\gamma_{\Gamma_R}} \KK\ssbr{t}$ by Proposition~\ref{082}, we have $t^{\gamma_{\Gamma_R} + 1} \in R$. Then $R' = \KK \ssbr{t^2, t^{\gamma_{\Gamma_R} + 1}}$ is an algebroid curve over $\KK$ with $\ol{R'} = \ol R = \KK \ssbr{t}$. Moreover, we have $\Gamma_R = \Gamma_{R'}$, and hence $\fracid{C}_R = \fracid{C}_{R'} \subset R' \subset R \subset Q_{R'}$ by Proposition~\ref{082}. Therefore, $R = \KK \ssbr{t^2, t^{\gamma_{\Gamma_R} + 1}}$ by Lemma~\ref{081}.\ref{081b}. Then the $\KK$-algebra homomorphism
\begin{align*}
\phi \colon \KK \ssbr{x,y} &\to R, \\
x &\mapsto t^2, \\
y &\mapsto t^{n+1},
\end{align*}
has kernel $\abr{x^{n+1} - y^2}$ (see \cite[Algorithm~3.6 and Corollary~3.8]{greuel2007} or \cite[Proposition~5.54]{korell2018}).

\medskip

Now suppose that there is an $n \in 2 \NN + 1$ such that 
\[
\Gamma_R = \abr{\br{1,1}} \cup \br{\br{\frac{n+1}{2},\frac{n+1}{2}} + \NN^{\VR_R}}
\]
We identify $R \subset Q_R = \KK \ssbr{t_1} \sbr{t_1^{-1}} \times \KK \ssbr{t_2} \sbr{t_2^{-1}}$ (see Proposition~\ref{078}). Since $\br{1,1} \in \Gamma_R$, we may by Propositions~\ref{075} and \ref{078} choose the parameters $t_1$ and $t_2$ such that $\br{t_1, t_2} \in R$.

Set $\gamma = \frac{n+1}{2}$. Then $\gamma_{\Gamma_R} = \br{\gamma,\gamma}$. So by Proposition~\ref{082} we have $\br{t_1^\gamma, -t_2^\gamma} \in \fracid{C}_R \subset R$. Then $R'' = \KK \ssbr{\br{t_1,t_2}, \br{t_1^\gamma,-t_2^\gamma}}$ is an algebroid curve over $\KK$ with $\ol{R''} = \ol R = \KK \ssbr{t_1} \times \KK \ssbr{t_2}$. Moreover, we have $\Gamma_R = \Gamma_{R''}$, and hence $\fracid{C}_R = \fracid{C}_{R''} \subset R'' \subset R \subset Q_{R''}$ by Proposition~\ref{082}. Therefore, $R = \KK \ssbr{\br{t_1,t_2}, \br{t_1^\gamma,-t_2^\gamma}}$ by Lemma~\ref{081}.\ref{081b}. Then the $\KK$-algebra homomorphism
\begin{align*}
\psi \colon \KK \ssbr{x,y} &\to R, \\
x &\mapsto \br{t_1, t_2}, \\
y &\mapsto \br{t_1^{\frac{n+1}{2}}, -t_2^{\frac{n+1}{2}}},
\end{align*}
has kernel
\[
\abr{x^{\frac{n+1}{2}} - y} \cap \abr{x^{\frac{n+1}{2}} + y} = \abr{x^{n+1} - y^2}
\]
(see \cite[Algorithm~3.6 and Corollary~3.8]{greuel2007} or \cite[Proposition~5.54]{korell2018}).
\item[\ref{026d}$\implies$\ref{026c}]
This follows by computing the semigroup of values of $\KK \ssbr{x,y} / \abr{x^{n+1} - y^2}$ (see \cite{j.lond.math.soc.60.420}).
\qedhere
\end{description}
\end{proof}


\bibliographystyle{unsrt}
\bibliography{ges}

\begin{thebibliography}{10}

\bibitem{leuschke12}
Graham~J. Leuschke.
\newblock Non-commutative crepant resolutions: scenes from categorical
  geometry.
\newblock In {\em Progress in commutative algebra 1}, pages 293--361. de
  Gruyter, Berlin, 2012.

\bibitem{grauert1971}
H.~Grauert and R.~Remmert.
\newblock {\em {Analytische {S}tellenalgebren}}.
\newblock Springer-Verlag, Berlin-New York, 1971.
\newblock Unter Mitarbeit von O. Riemenschneider, Die Grundlehren der
  mathematischen Wissenschaften, Band 176.

\bibitem{j.symb.comp.26.273}
Theo de~Jong.
\newblock An algorithm for computing the integral closure.
\newblock {\em J. Symbolic Comput.}, 26(3):273--277, 1998.

\bibitem{decker99}
Wolfram Decker, Theo de~Jong, Gert-Martin Greuel, and Gerhard Pfister.
\newblock The normalization: a new algorithm, implementation and comparisons.
\newblock In {\em Computational methods for representations of groups and
  algebras ({E}ssen, 1997)}, volume 173 of {\em Progr. Math.}, pages 177--185.
  Birkh\"{a}user, Basel, 1999.

\bibitem{can.j.math.59.332}
Graham~J. Leuschke.
\newblock Endomorphism rings of finite global dimension.
\newblock {\em Canad. J. Math.}, 59(2):332--342, 2007.

\bibitem{int.j.alg.comp.24.69}
Janko B{\"o}hm, Wolfram Decker, and Mathias Schulze.
\newblock {Local analysis of {G}rauert-{R}emmert-type normalization
  algorithms}.
\newblock {\em Internat. J. Algebra Comput.}, 24(1):69--94, 2014.

\bibitem{proc.am.math.soc.25.748}
Ernst Kunz.
\newblock The value-semigroup of a one-dimensional {G}orenstein ring.
\newblock {\em Proc. Amer. Math. Soc.}, 25:748--751, 1970.

\bibitem{man.math.61.285}
F{\'e}lix {Delgado de la Mata}.
\newblock {Gorenstein curves and symmetry of the semigroup of values}.
\newblock {\em Manuscripta Math.}, 61(3):285--296, 1988.

\bibitem{arch.math.29.504}
Joachim J{\"a}ger.
\newblock L\"angenberechnung und kanonische {I}deale in eindimensionalen
  {R}ingen.
\newblock {\em Arch. Math. (Basel)}, 29(5):504--512, 1977.

\bibitem{comm.algebra.25.2939}
Marco D'Anna.
\newblock {The canonical module of a one-dimensional reduced local ring}.
\newblock {\em Comm. Algebra}, 25(9):2939--2965, 1997.

\bibitem{j.pure.app.alg.147.215}
V.~Barucci, M.~D'Anna, and R.~Fr{\"o}berg.
\newblock {Analytically unramified one-dimensional semilocal rings and their
  value semigroups}.
\newblock {\em J. Pure Appl. Algebra}, 147(3):215--254, 2000.

\bibitem{j.commut.algebra.11.81}
Philipp Korell, Mathias Schulze, and Laura Tozzo.
\newblock Duality on value semigroups.
\newblock {\em J. Commut. Algebra}, 11(1):81--129, 2019.

\bibitem{j.alg.188.418}
Valentina Barucci and Ralf Fr\"{o}berg.
\newblock One-dimensional almost {G}orenstein rings.
\newblock {\em J. Algebra}, 188(2):418--442, 1997.

\bibitem{j.alg.379.355}
Shiro Goto, Naoyuki Matsuoka, and Tran~Thi Phuong.
\newblock Almost {G}orenstein rings.
\newblock {\em J. Algebra}, 379:355--381, 2013.

\bibitem{amer.j.math.93.649}
Joseph Lipman.
\newblock Stable ideals and {A}rf rings.
\newblock {\em Amer. J. Math.}, 93:649--685, 1971.

\bibitem{korell2018}
Philipp Korell.
\newblock {\em Combinatorics of Valuations on Curve Singularities}.
\newblock Dissertation, Technische Universit\"at Kaiserslautern, 2018.

\bibitem{herzog1971}
J{\"u}rgen Herzog and Ernst Kunz, editors.
\newblock {\em {Der kanonische {M}odul eines {C}ohen-{M}acaulay-{R}ings}}.
\newblock {Lecture Notes in Mathematics, Vol. 238}. Springer-Verlag, Berlin-New
  York, 1971.
\newblock Seminar {\"u}ber die lokale Kohomologietheorie von Grothendieck,
  Universit{\"a}t Regensburg, Wintersemester 1970/1971.

\bibitem{j.symb.comp.45.887}
Gert-Martin Greuel, Santiago Laplagne, and Frank Seelisch.
\newblock {Normalization of rings}.
\newblock {\em J. Symbolic Comput.}, 45(9):887--901, 2010.

\bibitem{eisenbud1995}
David Eisenbud.
\newblock {\em {Commutative algebra}}, volume 150 of {\em {Graduate Texts in
  Mathematics}}.
\newblock Springer-Verlag, New York, 1995.
\newblock With a view toward algebraic geometry.

\bibitem{can.j.math.26.412}
Malcolm Griffin.
\newblock {Valuations and {P}r{\"u}fer rings}.
\newblock {\em Canad. J. Math.}, 26:412--429, 1974.

\bibitem{kiyek2004}
K.~Kiyek and J.~L. Vicente.
\newblock {\em {Resolution of curve and surface singularities}}, volume~4 of
  {\em {Algebras and Applications}}.
\newblock Kluwer Academic Publishers, Dordrecht, 2004.
\newblock In characteristic zero.

\bibitem{bruns1998}
Winfried Bruns and J{\"u}rgen Herzog.
\newblock {\em {Cohen-{M}acaulay rings}}, volume~39 of {\em {Cambridge Studies
  in Advanced Mathematics}}.
\newblock Cambridge University Press, Cambridge, 1993.

\bibitem{ric.mat.23}
Laura Casabella and Marco D'Anna.
\newblock {Almost symmetric good semigroups}.
\newblock {\em Ricerche di Matematica}, 2023.

\bibitem{HK71}
J.~Herzog and E.~Kunz.
\newblock Die {W}ertehalbgruppe eines lokalen {R}ings der {D}imension {$1$}.
\newblock {\em S.-B. Heidelberger Akad. Wiss. Math. Natur. Kl.}, pages 27--67,
  1971.

\bibitem{matsumura1980}
Hideyuki Matsumura.
\newblock {\em {Commutative algebra}}, volume~56 of {\em {Mathematics Lecture
  Note Series}}.
\newblock Benjamin/Cummings Publishing Co., Inc., Reading, Mass., second
  edition, 1980.

\bibitem{atiyah1969}
M.~F. Atiyah and I.~G. Macdonald.
\newblock {\em Introduction to commutative algebra}.
\newblock Addison-Wesley Publishing Co., Reading, Mass.-London-Don Mills, Ont.,
  1969.

\bibitem{greuel2007}
G.-M. Greuel, C.~Lossen, and E.~Shustin.
\newblock {\em {Introduction to singularities and deformations}}.
\newblock {Springer Monographs in Mathematics}. Springer, Berlin, 2007.

\bibitem{j.lond.math.soc.60.420}
A.~Campillo, F.~Delgado, and S.~M. Gusein-Zade.
\newblock On generators of the semigroup of a plane curve singularity.
\newblock {\em J. London Math. Soc. (2)}, 60(2):420--430, 1999.

\end{thebibliography}


\end{document}